\newtheorem{assumption}{Assumption}
\crefname{remark}{Remark}{Remarks}
\Crefname{remark}{Remark}{Remarks}
\def\set@curr@file#1{\def\@curr@file{#1}} 
\title{A Dimension-free Computational Upper-bound for Smooth Optimal Transport Estimation}
\date{\today}
\begin{document}

\maketitle

\begin{abstract}
    It is well-known that plug-in statistical estimation of optimal transport suffers from the curse of dimensionality. Despite recent efforts to improve the rate of estimation with the smoothness of the problem, the computational complexity of these recently proposed methods still degrade exponentially with the dimension. In this paper, thanks to an infinite-dimensional sum-of-squares representation, we derive a statistical estimator of smooth optimal transport which achieves a precision $\varepsilon$ from $\tilde{O}(\varepsilon^{-2})$ independent and identically distributed samples from the distributions, for a computational cost of $\tilde{O}(\varepsilon^{-4})$ when the smoothness increases, hence yielding dimension-free statistical \emph{and} computational rates, with potentially exponentially dimension-dependent constants.   
\end{abstract}

\section{Introduction}\label{sec:intro}

\BM{I've added the autonum package, which allows to only number referenced equations}
The comparison between probability distributions is a fundamental task and has been extensively used in machine learning. 
For this purpose, optimal transport (OT) has recently gained traction in different subfields of machine learning (ML), such as natural language processing (NLP) \citep{xu2018distilled,chen2018adversarial}, generative modeling \citep{arjovsky2017wasserstein,tolstikhin2018wasserstein,salimans2018improving}, multi-label classification \citep{frogner2015learning}, domain adaptation \citep{redko2019domain}, clustering \citep{ho2017multilevel}, and has had an impact in other areas such as imaging sciences \citep{Feydy2017,bonneel2011displacement}. 
Indeed, OT is a tool to compare data distributions which has arguably many more geometric properties than other available divergences \citep{peyre2019computational}. 

In practice, the optimal transport cost is often computed for the squared distance (leading to the Wasserstein-2 distance) on sampled distributions with $n$ observations, and it is well-known that optimal transport suffers from the curse of dimensionality \citep{fournier2015rate}: the plug-in strategy, which simply consists in computing the Wasserstein distance between the sampled distributions, yields an estimation of the Wasserstein squared distance between a density and its sampled version in $O(1/n^{1/d})$, which degrades rapidly in high dimensions; this can only be improved to $O(1/n^{2/d})$ in the case of two different distributions~\citep{chizat2020faster}.

However, high dimension is the usual setting in machine learning, such as in NLP~\citep{grave2019unsupervised}, and even if the intrinsic dimensionality of data can be leveraged~\citep{weed2019sharp,weed2019spiked}\BM{fixed references here}, poor theoretical rates of convergence are a recurrent feature of OT. 
\BM{Changed this paragraph to add references to \citep{liang2018HowWell} and \citep{liang2019EstimatingCertain}}
\citet{liang2018HowWell,liang2019EstimatingCertain} recently showed that when the measures admit smooth densities, the Wasserstein-1 distance (as part of a more general class of integral probability metrics (IPM)) those minimax sample complexity rates could be improved to almost $O(1/\sqrt{n})$ when the smoothness increases.
\citet{weed2019estimation} then showed equivalent rates in the case smooth densities with geometric assumptions on their supports for the Wasserstein-$p$ distances with $p > 1$, which are not IPMs, and proposed a corresponding estimator based on a dedicated non-polynomial-time algorithm.
Matching rates were then proved for the transportation maps themselves~\citep{hutter2019minimax} in the Wasserstein-2 setting, under smoothness assumptions on those maps. This line of work is deeply related to the regularity theory of optimal transport, that guarantees the smoothness of the optimal map in Euclidean spaces under similar assumptions on the source and target distributions, and their supports~\citep{caffarelli1992regularity,philippis2013mongeampre}. Yet, to this day no practically tractable algorithm (e.g., with polynomial time) matching the bounds of \citet{weed2019estimation} and \citet{hutter2019minimax} is known.

\paragraph{State of the art.}
An approach that has first been  advocated in the machine learning community as a way to efficiently approximate empirical OT and to make it differentiable consists in adding an entropic regularization term to the OT problem~\citep{cuturi2013sinkhorn}. Rates on the sample complexity of entropic optimal transport have then been studied by \citet{genevay2019sample} and \citet{mena2019statistical}, and were proven to be of the order $O(\frac{1}{\varepsilon
^{\lfloor d/2\rfloor}\sqrt{n}})$, for small values of $\varepsilon$. Although the dependency in the number of samples is in $1/\sqrt{n}$, the constant degrades exponentially with respect to the dimension.
Entropic OT and Sinkhorn divergences~\citep{genevay2018learning} were then leveraged as a tool to study the sample complexity of the (unregularized) OT problem itself:
the most advanced results in this direction were derived by \citet{chizat2020faster}, who show that with few assumptions on the regularity of the Kantorovich potentials, 
the squared Wasserstein distance can be estimated using $O(\varepsilon^{-d/2 + 2})$ samples and $\tilde{O}(\varepsilon^{-(d' + 5.5)})$ operations (with $d' = 2 \lfloor \frac d 2 \rfloor$) with high probability.
\par
\BM{I've added references to \citep{liang2018HowWell} and \citep{liang2019EstimatingCertain} in this paragraph.}
Our work can be related to a current research direction which consists in developing estimators of the Wasserstein distance for classes of smooth distributions, with smoothness parameter $m$, that have better performances than in the general case. Related to this trend, \citet{liang2018HowWell,liang2019EstimatingCertain} showed minimax rates for a class of integral probability metrics (IPM) that includes the Wasserstein-1 distance, as a function of the smoothness of the distributions. However, (i) for $p > 1$, the Wasserstein-$p$ distance $W_p$ is not an IPM and (ii) no estimators with matching rates are proposed in those two works. So far, two main contributions leveraging smoothness that are applicable to the $W_2$ distance can be found. \citet{hutter2019minimax} derive minimax rates for the estimation of the OT maps and propose an estimator which necessitates, for an $L_2$ error on the maps of order $\varepsilon$, $O(\varepsilon^{-\frac{2m - 1 + d/2}{2m}})$ samples. While statistically almost optimal, this estimator is not computationally feasible as it requires to project the potentials on a space of smooth, strongly-convex functions. Instead, \citet{weed2019estimation}  derive estimators for the densities requiring $O(\varepsilon^{-\frac{d + 2m}{1+m}})$ samples and, under the assumption that an efficient resampler is available, derive an estimator of the OT distance that can be calculated in $\tilde{O}(\varepsilon^{-(2d + d/2)})$ time.
\par While the contributions above do succeed in taking advantage of the smoothness from a statistical point of view, they do not manage to take advantage of the smoothness from a computational point of view. Actually, statistical-computational gaps are known to exist for some instances of high-dimensional OT, such as the spiked transport model of \citet{weed2019spiked}.

\paragraph{Contributions.} 
In this paper, we bridge the statistical-computational gap of smooth OT estimation and we provide a positive answer to the question whether smoothness of the optimal potentials can be computationally beneficial to an efficient statistical estimator. 
More precisely, we propose an algorithm which, for a given accuracy $\varepsilon$, needs $O(\varepsilon^{-2})$ samples and has a computational complexity of $\tilde{O}(\varepsilon^{-\max(4,\frac{7d}{m-d})})$. Note that the computational complexity improves with the regularity of the distributions and, when $m \geq 3d$, it is $\tilde{O}(\varepsilon^{-4})$, i.e., independent of the dimension $d$ in the exponent (but not in the constants). We thus show that smoothness can be leveraged in the computational estimation of optimal transport.

Moreover, we consider different scenarios beyond i.i.d.~sampling, such as the case where we are able to compute  exact integrals or where we can evaluate the densities in given points, by representing the problem in terms of kernel mean embeddings~\citep{muandet2016kernel}. This allows to make a unified analysis for all the cases. The total error is then the sum of the error induced by approximating via the kernel mean embedding plus the error induced by subsampling the constraints. 
Interestingly, 
in the other scenarios the computational cost to achieve an error $\eps$ can be smaller than $\eps^{-4}$, as reported below.
This is particularly interesting in the case we can evaluate the densities in given points and avoids using expensive Monte-Carlo sampling techniques to obtain i.i.d.~samples (see \cref{sec:mean_estimators} for more details). Our results are summarized below. 

\begin{theorem}\label{thm:complexities}
Let $\eps > 0$. Let $\mu, \nu$ satisfy \cref{assum:measures} for some $m > d$. Let $\hOT$ be the proposed estimator defined in \cref{eq:hat_OT} and computed as in \cref{sec:algo} with the same parameters as in Corollary~\ref{cor:costs}. The cost to achieve $|\hOT - \OT(\mu, \nu)| \leq \eps$ for the three scenarios is:
\begin{enumerate}
\item (Exact integral) Time: $\tilde{O}(\eps^{-\frac{7d}{m-d}})$. Space: $\tilde{O}(\eps^{-\frac{4d}{m-d}})$.
\item (Evaluation) Time: $\tilde{O}(\eps^{-\frac{7d}{m-d}})$. Space: $\tilde{O}(\eps^{-\frac{4d}{m-d}})$. $\#$evaluations of $\mu,\nu$: $\tilde{O}(\eps^{-\frac{d}{m+1}})$.
\item (Sampling) Time: $\tilde{O}(\eps^{-\max(4,\frac{7d}{m-d})})$. Space: $\tilde{O}(\eps^{-\frac{4d}{m-d}})$.  $\#$samples of $\mu,\nu$: $\tilde{O}(\eps^{-2})$.
\end{enumerate}
\end{theorem}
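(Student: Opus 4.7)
The plan is to derive Theorem~\ref{thm:complexities} as a corollary of the two ingredients developed earlier in the paper: (i) the end-to-end approximation bound for the sum-of-squares estimator $\hOT$, and (ii) the per-scenario cost analysis of the solver from \cref{sec:algo}, summarized in Corollary~\ref{cor:costs}. The argument is essentially a balancing and substitution exercise.

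First, I would write the error as a sum
\begin{equation*}
|\hOT - \OT(\mu,\nu)| \;\leq\; E_{\mathrm{sos}}(N) + E_{\mathrm{data}},
\end{equation*}
where $N$ parametrises the finite-dimensional truncation of the sum-of-squares representation defining $\hOT$ in \cref{eq:hat_OT}, and $E_{\mathrm{data}}$ captures how $\mu$ and $\nu$ are accessed. Under \cref{assum:measures} with $m > d$, the dual Kantorovich potentials inherit the smoothness of the marginals, which yields an SoS bound of the form $E_{\mathrm{sos}}(N) \lesssim N^{-(m-d)/d}$. For the data term I would use the kernel mean embedding analysis of \cref{sec:mean_estimators}: $E_{\mathrm{data}}$ is controllable to arbitrary precision when exact integrals are available, decays at a smoothness-adapted rate when only pointwise evaluations are used, and is $O(n^{-1/2})$ with high probability in the i.i.d.\ sampling setting.

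Next, I would choose the parameters so that each error contribution is of order $\eps$. Inverting $E_{\mathrm{sos}}(N) \asymp \eps$ gives $N \asymp \eps^{-d/(m-d)}$; inverting the evaluation-based bound gives $\tilde{O}(\eps^{-d/(m+1)})$ evaluations; and inverting the concentration bound gives $n \asymp \eps^{-2}$ samples. Plugging these values into the time and memory formulas of Corollary~\ref{cor:costs}, which are polynomial in $N$ and, for the sampling case, in $n$, yields the announced exponents: $\eps^{-7d/(m-d)}$ for time and $\eps^{-4d/(m-d)}$ for space, together with an extra $\tilde{O}(n^2) = \tilde{O}(\eps^{-4})$ contribution in the sampling scenario from assembling the empirical Gram matrix. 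This extra contribution is precisely what produces the $\max(4,\cdot)$ inside the time exponent of item~3.

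The work truly hidden behind this plan is entirely in the two building blocks. The hard part is the SoS approximation estimate $E_{\mathrm{sos}}(N) \lesssim N^{-(m-d)/d}$, which requires constructing an $m$-smooth extension of the dual potentials that is compatible with the PSD-operator ansatz and proving a quantitative approximation theorem for such potentials by a finite-rank PSD operator; I would also have to verify that the log factors absorbed by the $\tilde{O}$ do not reintroduce a dimensional dependence in the exponent. Once these pieces are in place, together with the solver cost in Corollary~\ref{cor:costs}, Theorem~\ref{thm:complexities} follows by bookkeeping.
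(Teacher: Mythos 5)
Your high-level plan is the same as the paper's: split the error into the constraint-subsampling (SoS) part and the mean-embedding/data part, tune each to order $\eps$, and plug the resulting parameter sizes into the solver cost. Your identifications of the data budgets ($\tilde O(\eps^{-d/(m+1)})$ evaluations, $\tilde O(\eps^{-2})$ samples) and of the source of the $\eps^{-4}$ term in the sampling scenario (the $O(n^2)$ cost of forming $\|\hat w_\mu\|^2_{\hhx}=\tfrac{1}{n_\mu^2}\sum_{i,j}k_X(x_i,x_j)$, i.e.\ $C=O(n_\mu^2+n_\nu^2)$) are correct.

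However, the quantitative bookkeeping has a genuine gap. The subsampling error established in the paper is $\ell^{-(m-d)/(2d)}$ (up to logs), not $N^{-(m-d)/d}$: the constraints are subsampled on the $2d$-dimensional product $X\times Y$, so the fill distance of $\ell$ points scales as $\ell^{-1/(2d)}$, and the scattered-zeros bound (Theorem~\ref{thm:eps-exists} with $s=m$) converts this into $h_\ell^{\,m-d}\asymp \ell^{-(m-d)/(2d)}$; hence one must take $\ell\asymp\eps^{-2d/(m-d)}$, not $\eps^{-d/(m-d)}$. Moreover, the time and memory formulas are not in Corollary~\ref{cor:costs}: they come from the interior-point analysis of \cref{sec:algo} (\cref{eq:cost}, Theorems~\ref{thm:dual_algorithm} and~\ref{thm:estimator_precision}), namely $\tilde O(C+E\ell+\ell^{3.5})$ time and $O(\ell^2)$ memory. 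With your value $N\asymp\eps^{-d/(m-d)}$ these formulas give $\eps^{-3.5d/(m-d)}$ time and $\eps^{-2d/(m-d)}$ space, which is not the statement of the theorem; to reach the announced exponents $7d/(m-d)$ and $4d/(m-d)$ from your $N$ you would implicitly need an $N^{7}$/$N^{4}$ solver cost, which is unjustified. In other words, your two errors (subsampling rate off by a factor $2$ in the exponent, and solver-cost scaling) happen to cancel numerically against the announced answer, but the derivation chain as written is inconsistent; the correct chain is $\ell\asymp\eps^{-2d/(m-d)}$ combined with $\ell^{3.5}$ time and $\ell^2$ space, plus the scenario-dependent terms $C$ and $E\ell$ which are dominated except for $C=O(n^2)=O(\eps^{-4})$ in the sampling case. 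Finally, note that the hard ingredient is not a ``quantitative approximation of the potentials by a finite-rank PSD operator'': Theorem~\ref{thm:SoS-OT} gives an \emph{exact} rank-$d$ SoS representation of $c-u_\star-v_\star$, and all quantitative loss comes from subsampling the equality constraints (Theorems~\ref{thm:eps-exists} and~\ref{thm:sample_bound}) and from the mean-embedding estimation.
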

The second key contribution of this paper is to provide a new representation theorem for solutions of smooth optimal transport. The inequality constraint in the dual OT problem can be replaced with an equality constraint involving a finite sum-of-squares in a Sobolev space. In comparison with \cite{rudi2020global}, it is a non-trivial extension of their representation result to the case of a \emph{continuous} set of global minimizers instead of a \emph{finite} set.

\section{Sketch of the result and derivation of the algorithm}\label{sec:sketch}
In this paper, we consider the optimal transport problem for the quadratic cost on bounded subsets $X, Y$ of the Euclidean space $\R^d$. The set of probability measures on $X$ is denoted by $\mathcal{P}(X)$.
The optimal transport problem with quadratic cost $c(x,y) = \frac{1}{2}\|x-y\|^2$ can be stated in its dual formulation as
 \begin{align}
 \begin{split}
    \label{EqDualOT}
    \OT(\mu, \nu)  =   \sup_{u,v \in C(\R^d)} ~&~~ \int u(x)d\mu(x)  + \int v(y) d\nu(y) \\
    \textrm{subject to}  ~&~~  c(x,y) \geq u(x) + v(y), ~~ \forall (x,y) \in X \times Y,
\end{split}
\end{align}
As a standard result in optimal transport theory, the supremum is attained and the functions $u_\star,v_\star$ are referred to as the Kantorovich potentials \citep[see][]{santambrogio2015optimal}. 

The proposed approach to approximate $\OT(\mu, \nu)$ is the result of two main ingredients: (1) a suitable way to represent smooth functions and to approximate their integral in $\mu, \nu$, (2) a way to enforce efficiently the dense set of constraints on $u,v$.

\paragraph{Preliminary step: Representing smooth functions and integrals.}
We represent smooth functions via a {\em reproducing Kernel Hilbert space} (RKHS) \citep{aronszajn1950theory,steinwart2008support}, for which functions can be represented as linear forms.
In \Cref{sec:sos-rep} we show that under smoothness assumptions on $\mu$ and $\nu$ (\cref{assum:measures}) we have $u \in \hhx$ and $v \in \hhy$ where $\hhx$ and $\hhy$ are two suitable RKHSs on $X$ and $Y$, associated with two bounded continuous feature maps $\phi_X: X \to \hhx$ and $\phi_Y: Y \to \hhy$. Note that RKHSs offer several advantages. First, leveraging the reproducing property, we can represent the integrals in the functional of \cref{EqDualOT} as inner products in terms of the kernel mean embeddings $w_\mu \in \hhx$ and  $w_\nu \in \hhy$ where $w_\mu = \int_X \phi_X(x) d\mu(x)$ and $w_\nu = \int_Y \phi_Y(y) d\nu(y)$. Indeed, by the reproducing property, for all $u \in \hhx$, we have:
$$\int_X u(x) d\mu(x) = \int_X \scal{u}{\phi_X(x)}_\hhx d\mu(x) = \big<u, \int_X \phi_X(x) d\mu(x) \big>_\hhx = \scal{u}{w_\mu}_\hhx,$$
and the same reasoning holds for the integral on $\nu$, i.e., $\int_Y v(y) d\nu(y) = \scal{v}{w_\nu}_\hhy$, for all $v \in \hhy$. This construction is known as {\em kernel mean embedding} \citep{muandet2016kernel}. Moreover, RKHSs allow the so-called {\em kernel trick} \citep{steinwart2008support}, i.e., to express the resulting algorithm in terms of {\em kernel functions} that in our case correspond to $k_X(x,x') = \scal{\phi_X(x)}{\phi_X(x')}_\hhx$ and $k_Y(y,y') = \scal{\phi_Y(y)}{\phi_Y(y')}_\hhy$, that are known explicitly and are easily computable in $O(d)$.

\paragraph{The main step: Dealing with a dense set of inequalities.}
Even assuming that we are able to compute  integrals in closed form and restricting to $m$-times differentiable $u, v$, the main challenge is to deal with the dense set of inequalities $c(x,y) \geq u(x) + v(y)$ that $u,v$ must satisfy, for any $(x,y) \in X \times Y$. Indeed, an intuitive approach would be to subsample the set, i.e., to take $\ell$ points $(\tilde{x}_1,\tilde{y}_1),\dots,(\tilde{x}_\ell, \tilde{y}_\ell)$ in $X \times Y$ and consider only the constraints $c(\tilde{x}_j,\tilde{y}_j)  \geq u(\tilde{x}_j) + v(\tilde{y}_j)$ for $j = 1,\dots, \ell$. This approach, however, is only able to leverage the Lipschitzianity of $u, v$ \citep{aubin2020hard} and leads to an error in the order of $\ell^{-1/d}$ that does not allow to break the dependence in $d$ in the exponent, and yields rates that are equivalent to the plugin estimator.

In this paper, we leverage a more refined technique to approximate the dense set of inequalities, introduced by \cite{rudi2020global} for the problem of non-convex optimization, and that allows to break the curse of dimensionality for smooth problems. The idea behind this technique is the consideration that, while a dense set of inequalities is poorly approximated by subsampling, the situation is different in the case of a dense set of \emph{equality} constraints, for which an optimal rate of $O(\ell^{-m/d})$ is achievable for $m$-times differentiable constraints \citep{wendland2005}. The construction works in two steps: first, substitute the inequality constraints with equality constraints that are equivalent, and then subsample. In the next two paragraphs we explain how to apply this approach to the problem of OT. 

\paragraph{Removing the inequalities: positive definite operator characterization.}
To apply the construction recalled above to our scenario, we first consider the following problem. 
Let $\hhxy$ be a Hilbert space on $X \times Y$ and $\phi:X \times Y \to \hhxy$. Denote by $k_{XY}$ the kernel $k_{XY}((x,y),(x',y')) = \scal{\phi(x,y)}{\phi(x',y')}_\hhxy$ for any $(x,y), (x',y') \in X \times Y$ and by $\pdm{\hhxy}$ the space of positive operators on $\hhxy$. We define
\begin{align}\label{eq:W-A}
\begin{split}
    \max_{\substack{u \in \hhx, v \in \hhy,\\ A \in \pdm{\hhxy}}} ~~~ & \scal{u}{w_\mu}_\hhx + \scal{v}{w_\nu}_\hhy \\
    \textrm{subject to} ~~~ &\forall (x,y) \in X \times Y, ~~~ c(x,y) - u(x) - v(y) = \scal{\phi(x,y)}{A \phi(x,y)}_\hhxy,
\end{split}
\end{align}
where the inequality in \eqref{EqDualOT} is substituted with an equality w.r.t.\ a positive definite operator $A$ on $\hhxy$. Note that Problem \eqref{EqDualOT} is a relaxation of Problem \eqref{eq:W-A}: indeed, if for a given pair $u \in \hhx, v \in \hhy$ there exists a positive definite $A$ satisfying the equality above, then 
$$ c(x,y) - u(x) - v(y) = \scal{\phi(x,y)}{A \phi(x,y)}_\hhxy \geq 0, \qquad \forall (x,y) \in X \times Y,$$
so the couple $(u, v)$ is admissible for \eqref{EqDualOT}. However, even for an admissible couple in \eqref{EqDualOT} satisfying $u\in \hhx, v \in \hhy$, a positive operator $A$ may not exist. Indeed, note that the technique of representing a positivity constraint in terms of a positive matrix has a long history in the community of polynomial optimization \citep{lasserre2001,parrilo2003semidefinite,LasserreBook}, which shows that in general the resulting problem is not equivalent to the original one, for any chosen degree of polynomial approximation. This fact leads to the so-called {\em sum of squares hierarchies}, also used for optimal transport \citep{henrion2020graph}. Instead, using kernels, \cite{rudi2020global} showed that there exists a positive operator with finite rank that matches the constraints and makes the two problems equivalent, when the constraint is attained on a finite set of points. However, such existence results cannot be used for the problem in \eqref{eq:W-A}, since in the case of optimal transport the set of zeros corresponds to the graph of the optimal transport map and is a smooth manifold, when $\mu, \nu$ are smooth \citep{philippis2013mongeampre}.

A crucial point of our contribution is then to prove that, with a quadratic cost $c(x, y) = \frac{1}{2} \|x-y\|^2$ and under the same assumptions on the densities and their supports, or under smoothness assumptions on the Kantorovich potentials, there exists a positive operator on a suitable Hilbert space that represents the function $c(x,y) - u(x) - v(y)$ for a pair $u,v$ maximizing~\eqref{EqDualOT}, making the two problems equivalent. The result is reported in \Cref{thm:SoS-OT}. The proof is derived using the Fenchel dual characterization of $u_\star, v_\star$ and gives a sharp control of the rank of $A$.

\paragraph{Subsampling the constraints and approximating the integrals.}
We restrict the constraint of \eqref{eq:W-A} to $(\tilde{x}_1,\tilde{y}_1), \dots, (\tilde{x}_\ell,\tilde{y}_\ell) \subset X \times Y$ for $\ell \in \NN$. However, we need to add a penalization for $u, v$ and $A$ to avoid overfitting, since the error induced by subsampling the constraints is proportional to the trace of $A$ \citep{rudi2020global} and, in our case, also to the norms of $u, v$, as derived in \Cref{thm:sample_bound} in \Cref{sec:sample_bounds}. Finally, in two of the three scenarios of interest for the paper, i.e., (i) when we can only evaluate $\mu, \nu$ pointwise, or (ii) when we have only i.i.d.~samples from $\mu, \nu$,  we do not have access to the kernel mean embeddings $w_\mu \in \hhx, w_\nu \in \hhy$. Therefore, we need to use some estimators $\hat{w}_\mu \in \hhx, \hat{w}_\nu \in \hhy$ that are derived in \Cref{sec:mean_estimators}. The resulting problem is the following, for some regularization parameters $\la_1, \la_2 > 0$:
\begin{align}\label{eq:w_hat}
\begin{split}
    \max_{\substack{u \in \hhx, v\in \hhy, \\ A \in \pdm{\hhxy}}} ~~~ & \scal{u}{\hat{w}_\mu}_\hhx + \scal{v}{\hat{w}_\nu}_\hhy - \la_1 \tr(A) - \la_2(\|u\|^2_\hhx + \|v\|^2_\hhy) \\
    ~~~ \textrm{subject to} ~~~ &\forall j \in [\ell], ~~ c(\tilde{x}_j, \tilde{y}_j) - u(\tilde{x}_j) - v(\tilde{y}_j) = \scal{\phi(\tilde{x}_j,\tilde{y}_j)}{A \phi(\tilde{x}_j,\tilde{y}_j)}_\hhxy.
\end{split}
\end{align}
Let $\hat{u}, \hat{v}$ be the maximizers of the problem above (unique since the problem is strongly concave in $u, v$). The estimator for $\OT$ we consider corresponds to
\begin{equation}\label{eq:ot_hat_primal}
    \hOT = \scal{\hat{u}}{\hat{w}_\mu}_\hhx + \scal{\hat{v}}{\hat{w}_\nu}_\hhy.
\end{equation}

\paragraph{Finite-dimensional characterization.}
 In \Cref{sec:algo}, following \cite{marteau2020non}, we derive the dual problem of \cref{eq:w_hat}. Define ${\bf Q} \in \RR^{\ell \times \ell}$ as ${\bf Q}_{i,j} = k_X(\tilde{x}_i, \tilde{x}_j) + k_Y(\tilde{y}_i, \tilde{y}_j)$ and $z_j = \hat{w}_\mu(\tilde{x}_j) + \hat{w}_\nu(\tilde{y}_j) - 2 \la_2 c(\tilde{x}_j,\tilde{y}_j)$ for $i,j \in [\ell]$ and $q^2 =  \|\hat{w}_\mu\|^2_\hhx + \|\hat{w}_\nu\|^2_\hhy$, and let $\bf I_\ell \in \RR^{\ell \times \ell}$ be the identity matrix. Let ${\bf K} \in \RR^{\ell \times \ell}$ be defined as ${\bf K}_{i,j} = k_{XY}((\tilde{x}_i,\tilde{y}_i), (\tilde{x}_j, \tilde{y}_j))$ and define $\Phi_i \in \RR^\ell$ as the $i$-th column of $\bf R$, the upper triangular matrix corresponding to the Cholesky decomposition of $\bK$ (i.e., $\bf R$ satisfies ${\bf K} ={\bf R}^\top {\bf R}$). The dual problem writes:
 \begin{align}\label{eq:w_hat_dual}
\begin{split}
     \underset{\gamma \in \RR^\ell}{\min}& ~\tfrac{1}{4\lambda_2} \gamma^\top {\bf Q} \gamma - \tfrac{1}{2\lambda_2} \textstyle \sum_{j =1}^\ell \gamma_j z_j + \tfrac{q^2}{4\la_2} ~~~ \mbox{ such that } ~~~ \textstyle \sum_{j =1}^\ell \gamma_j \Phi_j\Phi_j^\top + \lambda_1 \eye_\ell \succeq 0.
 \end{split}
\end{align}
In the same section in \Cref{sec:algo}, we derive an explicit characterization of $\hat{u}, \hat{w}, \hat{A}$ in terms of $\hat{\gamma}$, the solution of the problem above and we characterize $\hOT$ as follows:
\begin{equation}\label{eq:hat_OT}
\hOT = \tfrac{q^2}{2\lambda_2} - \tfrac{1}{2\la_2} \textstyle \sum_{j = 1}^\ell \hat{\gamma}_j (\hat{w}_\mu(\tilde{x}_j) + \hat{w}_\nu(\tilde{y}_j)).
\end{equation}
As it is possible to observe from the problem above and the characterization of $\hOT$, the only quantities necessary to compute $\hat{\gamma}$ and $\hOT$ are the kernels $k_X, k_Y, k_{XY}$ and the evaluation of the functions $\hat{w}_\mu \in \hhx, \hat{w}_\nu \in \hhy$ at the points $\tilde{x}_j$ and $\tilde{y}_j$ respectively for $j \in [\ell]$. In \Cref{sec:algo}, we consider a Newton method with self-concordant barrier to solve the problem above \citep{nesterov1994interior}. To illustrate that this algorithm can indeed be implemented in practice, we run simulations on toy data in \Cref{sec:numerical}. The total cost of the procedure to achieve error $\eps$ for the computation of $\hOT$ is the following (see \cref{thm:estimator_precision}, \cpageref{thm:estimator_precision} in the appendix):
\begin{equation}\label{eq:cost}
O\big(C +  E\ell + \ell^{3.5} \log \tfrac{\ell}{\eps}\big) ~~\textrm{time}, \qquad O(\ell^2) ~~ \textrm{memory},
\end{equation}
where $C$ is the cost for computing $q^2$ and $E$ is the cost to compute one $z_j$. Depending on the operations that we are able to perform on $\mu, \nu$ and $k_X, k_Y$, we have three scenarios. In \Cref{sec:mean_estimators} we specify how to compute the vectors $\hat{w}_\mu, \hat{w}_\nu$, in Corollary \ref{cor:costs} we report only the conditions of applicability and the resulting cost. In the next section and then in \Cref{sec:algo} we quantify instead how to choose $\ell, \la_1, \la_2$ to achieve $|\hOT - \OT(\mu, \nu)| \leq \eps$ with high probability and we provide a complete computational complexity in $\eps$.

\subsection{Theoretical Guarantees}
Here we quantify the convergence rate of $\hOT$ to $\OT$. To simplify the exposition, in this section we will make a classical assumption on the smoothness of the densities \citep{philippis2013mongeampre}. Note however that the results of the paper hold under a more general assumption on the smoothness of the potentials (see \cref{thm:SoS-OT}).
\begin{assumption}[$m$-times differentiable densities]\label{assum:measures}
Let $m, d \in \NN$. Let $\mu, \nu \in {\cal P}(\R^d)$.
\begin{enumerate}[a)]
    \item \label{assum:support_sets} $\mu, \nu$ have densities. Their supports, resp. $X, Y \subset \RR^d$ are convex, bounded and open;
    \item \label{assum:smooth_densties} the densities are finite, bounded away from zero, with Lipschitz  derivatives up to order $m$.
\end{enumerate}
\end{assumption}
\cref{assum:measures} is particularly adapted to our context since it guarantees that the Kantorovich potentials have a similar order of differentiability \citep{philippis2013mongeampre}.
The main result, \Cref{thm:sample_bound}, is expressed for a general set of couples $(\tilde{x}_j, \tilde{y}_j)$, $j \in [\ell]$. Here, we specify it for the case where the couples are sampled independently and uniformly at random.

\begin{theorem}\label{thm:uniform}
Let $\mu, \nu$ satisfy \cref{assum:measures} for $m > d$ and $m \geq 3$. Let $\ell \in \N$ and $\delta \in (0,1]$. Let $(\tilde{x}_j, \tilde{y}_j)$ be independently sampled from the uniform distribution on $X \times Y$. 
Let $\hOT$ be computed with $k_X = k_{m+1}$, $k_Y = k_{m+1}$ and $k_{XY} = k_m$ where $k_s$ for $s > 0$ is the Sobolev kernel in \cref{eq:sobolev-kernel}. 
Then, there exists $\ell_0$ depending only on $d,m,X,Y$ and $C_1, C_2$ depending only on $u_\star, v_\star$ and $d$, such that when $\ell \geq \ell_0$ and $\la_1, \la_2$ are chosen to satisfy
\begin{equation}\label{eq:choose-la1-la2}
    \la_1 \geq C_1 \ell^{-m/2d+1/2} \log \tfrac{\ell}{\delta}, \quad \la_2 \geq \|w_\mu - \hat{w}_\mu\|_\hhx + \|w_\nu - \hat{w}_\nu\|_\hhy + \la_1,
\end{equation}
then, with probability $1-\delta$, we have
$$ |\hOT - \OT(\mu, \nu)| \leq C_2 \la_2.$$
\end{theorem}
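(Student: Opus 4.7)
The plan is to deduce this result from the general deterministic bound of \cref{thm:sample_bound} by controlling the fill distance of uniform samples in $X\times Y \subset \R^{2d}$. \cref{thm:sample_bound} should give, for any design $(\tilde{x}_j, \tilde{y}_j)_{j\in[\ell]}$ with fill distance $h$ small enough, an error bound roughly of the form $|\hOT - \OT(\mu,\nu)| \lesssim \la_2$ as soon as $\la_1$ dominates an explicit function of $h$ (and of $\ell$, through the Lagrangian analysis) and $\la_2$ dominates $\la_1$ together with the kernel mean embedding errors. Under \cref{assum:measures}, the representation \cref{thm:SoS-OT} and the regularity theory of \citep{caffarelli1992regularity,philippis2013mongeampre} guarantee that $u_\star \in \hhx$, $v_\star \in \hhy$ and that there exists $A_\star \in \pdm{\hhxy}$ with $c - u_\star - v_\star = \scal{\phi(\cdot)}{A_\star \phi(\cdot)}_\hhxy$ on $X\times Y$; the Sobolev choices $k_X = k_Y = k_{m+1}$ and $k_{XY} = k_m$ are calibrated exactly to this regularity (order $m+1$ for the potentials, $m$ for the SoS constraint function).

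The probabilistic step, specific to this corollary, is to show that if $(\tilde{x}_j, \tilde{y}_j)$ are i.i.d.~uniform on $X\times Y$, then with probability at least $1-\delta$ the fill distance satisfies $h \leq C\,(\log(\ell/\delta)/\ell)^{1/(2d)}$ for a constant $C$ depending only on $X, Y, d$. The standard argument is a covering/union bound: cover $X\times Y$ by $O(r^{-2d})$ Euclidean balls of radius $r$, each of mass at least $c_{X,Y}\,r^{2d}$; the probability that any one ball contains no sample is at most $(1 - c_{X,Y}\,r^{2d})^{\ell} \leq \exp(-c_{X,Y}\,r^{2d}\ell)$, so choosing $r^{2d}$ of order $\log(\ell/\delta)/\ell$ and taking a union bound gives $h \leq 2r$ with the claimed probability. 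The assumption $\ell \geq \ell_0$ only ensures that this $h$ falls in the regime where \cref{thm:sample_bound} applies.

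Combining these two ingredients, the subsampling contribution should scale as $h^{m}\cdot \ell^{1/2} \lesssim \ell^{-m/(2d)+1/2}\log^{m/(2d)}(\ell/\delta)$, where the $\ell^{1/2}$ factor comes from the Lagrangian argument of \cite{rudi2020global} that trades the dense equality constraint against $\ell$ sampled ones. This is precisely what the lower bound on $\la_1$ in \eqref{eq:choose-la1-la2} is tailored to dominate, and the stronger lower bound on $\la_2$ then absorbs both $\la_1$ and the kernel mean embedding errors $\|w_\mu - \hat{w}_\mu\|_\hhx, \|w_\nu - \hat{w}_\nu\|_\hhy$, yielding $|\hOT - \OT(\mu,\nu)| \leq C_2 \la_2$ on the event of probability $\geq 1-\delta$, with $C_1, C_2$ depending on $u_\star, v_\star, A_\star$ only through norms that \cref{thm:SoS-OT} controls in terms of $u_\star, v_\star$ and $d$.

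The main obstacle is not in this specialization, which is essentially bookkeeping once the fill-distance estimate is available, but in the general \cref{thm:sample_bound} itself, whose proof requires a convex-duality analysis of the regularized subsampled program \eqref{eq:w_hat} and a quantitative comparison of its maximizer $(\hat u, \hat v, \hat A)$ with $(u_\star, v_\star, A_\star)$ strong enough to translate pointwise constraint violation into an error on the scalar $\hOT$. In particular, the trade-off between the penalizations $\la_1 \tr(A)$ and $\la_2(\|u\|^2 + \|v\|^2)$, needed to offset the subsampling bias, and the suboptimality they introduce is exactly what produces the $\ell^{1/2}$ overhead in the condition on $\la_1$ and thus constrains the admissible regime of parameters in \eqref{eq:choose-la1-la2}.
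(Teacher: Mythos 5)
Your proposal follows essentially the same route as the paper: invoke the deterministic bound of \cref{thm:sample_bound} with the triplet $(u_\star,v_\star,A_\star)$ whose existence follows from \cref{assum:measures} via \cref{thm:SoS-OT} (Corollary~\ref{cor:asm1-Astar}, matching $\hhx=\hhy=H^{m+1}$, $\hhxy=H^m$), control the fill distance of $\ell$ i.i.d.\ uniform points on the convex bounded set $X\times Y\subset\R^{2d}$ by $h_\ell\lesssim(\log(\ell/\delta)/\ell)^{1/(2d)}$ with probability $1-\delta$ (the paper does this via the uniform interior cone condition, Lemma~\ref{lm:uic-convex-set}, and Lemma~\ref{lm:uic-sampling}, which is the same content as your covering/union-bound argument), and then check that the choices \eqref{eq:choose-la1-la2} make all three terms of the bound $O(\la_2)$.

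One concrete point in your reasoning is off, even though the exponents happen to come out right. You attribute the $\ell^{1/2}$ in the threshold $\la_1\gtrsim \ell^{-m/(2d)+1/2}\log(\ell/\delta)$ to a ``Lagrangian'' overhead of order $\sqrt{\ell}$ from trading the dense constraint against $\ell$ sampled ones, i.e.\ a bound of the form $h^m\cdot\ell^{1/2}$. No such factor appears in the analysis: \cref{thm:sample_bound} only requires $\la_1\ge 2\eta$ with $\eta=C_0 h_\ell^{\,s-d}$, where the exponent $s-d=m-d$ comes from the scattered-data inequality of \cref{thm:eps-exists} (a Sobolev sampling bound $\sup|f|\lesssim h_\ell^{\,s-q/2}|f|_{H^s}$ on the $q=2d$-dimensional domain $X\times Y$, applied with $s=m$). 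Since $-\tfrac{m-d}{2d}=-\tfrac{m}{2d}+\tfrac12$, the condition on $\la_1$ is exactly $h_\ell^{\,m-d}$ evaluated at the uniform-sampling rate, and your $h^m\sqrt{\ell}$ coincides with it only because $h_\ell^{-d}\asymp(\ell/\log(\ell/\delta))^{1/2}$; the regularization trade-off between $\la_1\tr(A)$ and $\la_2(\|u\|^2+\|v\|^2)$ plays no role in producing this exponent. Apart from this misattribution (and the paper's additional bookkeeping step of selecting the admissible triplet minimizing $\|u_\star\|^2_{\hhx}+\|v_\star\|^2_{\hhy}+\tr(A_\star)$, which is what the constants $C_1,C_2$ actually depend on), your specialization is the paper's proof.
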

Note that while the rate does not depend exponentially in $d$ as we will see in the rest of the section, the constants $\ell_0, C_1, C_2$ depend exponentially in $d$ in the worst case, \BM{I changed the text in \Cref{thm:uniform} because we were saying the $C_1$ and $C_2$ only depend on $u_\star, v_\star$ and not $d$} as \cite{rudi2020global} for the case of global optimization. From the theorem above it is clear that the approximation error of $\hOT$ is the sum of the error induced by the kernel mean embeddings plus the error induced by the subsampling of the inequality. Note here that the result of the theorem above holds also if the $\ell$ couples are i.i.d. from $\rho = \mu \otimes \nu$, as discussed in Remark~\ref{rem:sampling-munu}. This can be beneficial if we do not know $X, Y$ or we do not know how to sample from them. In the next corollary we will specialize the result depending on the considered scenarios.

\begin{corollary}\label{cor:costs}
Under the same assumptions as \cref{thm:uniform}, let $k_X = k_{m+1}$, $k_Y = k_{m+1}, k_{XY} = k_m$ where $k_s$ for $s > 0$ is defined in \cref{eq:sobolev-kernel} and $\la_1 \geq C_1 \ell^{-(m-d)/2d} \log \tfrac{\ell}{\delta}$. Compute $\widehat{OT}$ with $\hat{w}_\mu, \hat{w}_\nu$ chosen according to one of the three scenarios below, as in \cref{sec:mean_estimators}. There exist $C, C', C_2', C_2''$ s.t. with probability at least $1-\delta$, \BM{added the "with probability..." part}
\begin{enumerate}
\item (Exact integral) When we are able to compute exactly $\int k_X(x,x') d\mu(x') d\mu(x)$ and also $\int_X k_X(x, x') d\mu(x)$ for any $x' \in X$ (and analogously for $\nu$). Choose $\la_2 = \la_1$. Then,
$$|\hOT - \OT(\mu, \nu)| \leq C_2 \ell^{-(m-d)/2d} \log \tfrac{\ell}{\delta}.$$
\item (Evaluation) When we are only able to evaluate $\mu, \nu$ on given points and to compute $\int_X k_X(x,z)k_X(x',z) dz, \int_X \int_X k_X(x,z)k_X(z,z')k_X(x',z') dz dz'$. Evaluate $\mu$ in $n_\mu$ points sampled uniformly from $X$ (and $n_\nu$ for $\nu$). Let $\la_2 = \la_1 + C(n_\mu + n_\nu)^{-(m+1)/d}\log \tfrac{n_\mu+ n_\nu}{\delta}$,
$$|\hOT - \OT(\mu, \nu)| \leq C_2' (n_\mu^{-(m+1)/d}\log \tfrac{n_\mu}{\delta} + n_\nu^{-(m+1)/d}\log \tfrac{n_\nu}{\delta} +  \ell^{-(m-d)/2d} \log \tfrac{\ell}{\delta}).$$
\item (Sampling) When we are only able to sample from $\mu, \nu$, by using $n_\mu$ i.i.d.~samples from $\mu$ and $n_\nu$ from $\nu$. Choose $\la_2 = \la_1 + C'(n_\mu + n_\nu)^{-1/2}\log \tfrac{n_\mu+n_\nu}{\delta} $. Then,
$$|\hOT - \OT(\mu, \nu)| \leq C_2'' (n_\mu^{-1/2}\log \tfrac{n_\mu}{\delta} + n_\nu^{-1/2}\log \tfrac{n_\nu}{\delta} +  \ell^{-(m-d)/2d} \log \tfrac{\ell}{\delta}).$$
\end{enumerate}
\end{corollary}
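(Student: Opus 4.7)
The plan is to view Corollary~\ref{cor:costs} as a direct specialization of Theorem~\ref{thm:uniform} in which the generic bound $|\hOT - \OT(\mu,\nu)| \le C_2 \la_2$ is made concrete by controlling, in each of the three scenarios, the kernel mean embedding error $\|w_\mu - \hat{w}_\mu\|_{\hhx} + \|w_\nu - \hat{w}_\nu\|_{\hhy}$. Observe first that the exponent in the choice of $\la_1$ in the corollary, $-(m-d)/(2d)$, coincides with $-m/(2d)+1/2$ appearing in \eqref{eq:choose-la1-la2}, so the hypothesis on $\la_1$ is exactly the one required by Theorem~\ref{thm:uniform}. Thus, once $\la_2$ is taken to be at least $\|w_\mu - \hat{w}_\mu\|_\hhx + \|w_\nu - \hat{w}_\nu\|_\hhy + \la_1$, Theorem~\ref{thm:uniform} gives the bound $C_2\la_2$ with probability $1-\delta$ over the i.i.d.\ uniform sampling of the $(\tilde{x}_j,\tilde{y}_j)$.

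The next step is to import, scenario by scenario, the high-probability bounds on the kernel mean embedding error derived in \cref{sec:mean_estimators}. In the \emph{exact integral} case the integrals defining $w_\mu, w_\nu$ in the RKHS are computed in closed form, so $\hat{w}_\mu = w_\mu$ and $\hat{w}_\nu = w_\nu$; one can simply set $\la_2 = \la_1$ and the bound $C_2\la_1 \lesssim \ell^{-(m-d)/2d}\log(\ell/\delta)$ follows. In the \emph{evaluation} case, $\hat{w}_\mu$ is built from Nyström-type quadrature using the densities at $n_\mu$ uniform points in $X$; the corresponding approximation rate is $O(n_\mu^{-(m+1)/d}\log(n_\mu/\delta))$, the standard rate for scattered-data interpolation of an $(m+1)$-smooth function (here the density, which has Lipschitz derivatives up to order $m$) by points in the Sobolev kernel $k_{m+1}$. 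In the \emph{sampling} case, $\hat{w}_\mu$ is the empirical mean $\frac{1}{n_\mu}\sum_i \phi_X(X_i)$ and a standard Bernstein inequality in Hilbert space yields the $O(n_\mu^{-1/2}\log(n_\mu/\delta))$ rate. In each case, taking $\la_2$ equal to $\la_1$ plus the corresponding embedding error (and symmetrically for $\nu$) satisfies the requirement of Theorem~\ref{thm:uniform}.

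Plugging these three bounds into $C_2 \la_2$ gives exactly the three rates announced in the corollary, after relabeling constants $C_2, C_2', C_2''$. The only probabilistic ingredient besides Theorem~\ref{thm:uniform} is the concentration inequality(ies) used to control $\|w_\mu-\hat{w}_\mu\|_{\hhx}$ and $\|w_\nu-\hat{w}_\nu\|_{\hhy}$; a union bound (with $\delta$ split between the three events: uniform subsampling, $\mu$-side estimation, $\nu$-side estimation) yields the global $1-\delta$ probability, only affecting the logarithmic factors through a redefinition of the constants.

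The step I expect to demand the most care is verifying that the approximation rate $O(n^{-(m+1)/d})$ in the evaluation scenario is compatible with our choice of kernels $k_X = k_{m+1}$, i.e., that the mean embedding $w_\mu$ truly lies in the range of the relevant interpolation operator with the right smoothness; this is the analogue, at the level of mean embeddings, of the argument underlying Theorem~\ref{thm:uniform} and requires invoking the precise statements of \cref{sec:mean_estimators} rather than generic Sobolev interpolation bounds. The remaining calculations (matching the exponent in $\la_1$, and verifying the Bernstein bound in scenario~3) are routine.
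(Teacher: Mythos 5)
Your proposal is correct and follows essentially the same route as the paper: specialize \cref{thm:uniform} by plugging in scenario-specific bounds on $\gamma = \|w_\mu-\hat w_\mu\|_{\hhx}+\|w_\nu-\hat w_\nu\|_{\hhy}$ ($\gamma=0$ for exact integrals, a kernel least-squares density-estimation rate $h^{m+1}$ combined with the fill-distance bound for evaluation, and a Pinelis/Hilbert-space concentration bound for sampling), then set $\la_2 = \la_1 + \gamma$ and absorb the union bound into constants. The step you flagged as delicate is indeed the only place the paper does real work in the evaluation case: it factors $w_\mu = Tg_\mu$ and $\hat w_\mu = T\hat g_\mu$ through the operator $T=\int \phi_X(x)\otimes\phi_X(x)\,dx$, bounds $\|T(g_\mu-\hat g_\mu)\|_{\hhx}\le \|T^{1/2}\|_{op}\,\|g_\mu-\hat g_\mu\|_{L^2(X)}$, and checks that \cref{assum:measures} puts the density $g_\mu$ in $H^{m+1}(X)=\hhx$ so that the scattered-data rate applies.
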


\section{Notations and background}\label{sec:background}
%
Let $n \in \NN$, we denote by $[n]$ the set $\{1,\dots,n\}$. For a set $Z$, and a positive definite kernel $k:Z \times Z \to \RR$ (i.e., so that all matrices of pairwise kernel evaluations are positive semi-definite), we can define a {\em reproducing kernel Hilbert spaces} \citep{aronszajn1950theory} $\hh$   of real functions on $Z$, endowed with an inner product $\scal{\cdot}{\cdot}_\hh$, and a norm  $\|\cdot\|_\hh$. It satisfies: (1) $k(z, \cdot) \in \hh$ for any $z \in Z$ and (2) the {\em reproducing property}, i.e., for any $f \in \hh, z \in Z$ it holds that $f(z) = \scal{f}{k(z,\cdot)}_\hh$. The {\em canonical feature map} associated to $\hh$ is the map $\phi:Z \to \hh$ corresponding to $z \mapsto k(z,\cdot)$, so that $k(z,z') = \scal{\phi(z)}{\phi(z')}_\hh$ \citep{aronszajn1950theory}.  

In this paper we use Sobolev spaces, defined on $Z \subseteq \R^d$, with $d \in \N$, an open set. For  $s \in \N$, denote by $H^s(Z)$ the {\em Sobolev space} of functions whose weak derivatives up to order $s$ are square-integrable, i.e., $H^s(Z) := \{f \in L^2(Z) ~|~ \|f\|_{H^s(Z)} < \infty \}$ and $\|f\|_{H^s(Z)} := \sum_{|\alpha| \leq s} \|D^\alpha f\|_{L^2(Z)}$ \citep{adams2003sobolev}. A remarkable property of $H^s(Z)$ that we are going to use in the proofs is that $H^s(Z) \subset C^{k}(Z)$ for any $s > d/2 + k$ and $k \in \N$. Moreover $H^{m+1}(Z) \subset H^m(Z), \forall m \in \N$.

\begin{proposition}[Sobolev kernel, \cite{wendland2004scattered}]\label{ex:sobolev-kernel} 
Let $Z \subset \R^d$, $d \in \N$, be an open bounded set. Let $s > d/2$. Define
\begin{equation}\label{eq:sobolev-kernel}
k_s(z,z') = c_s \|z-z'\|^{s-d/2} {\cal K}_{s-d/2}(\|z-z'\|), \quad \forall z,z' \in Z,
\end{equation}
where ${\cal K}:\R \to \R$ is the Bessel function of the second kind (see, e.g., Eq. 5.10 of the same book) and $c_s = \frac{2^{1 + d/2 -s}}{\Gamma(s-d/2)}$. Then the function $k_s$ is a kernel. Denoting by $\hh_Z$ the associated RKHS, when $Z$ has Lipschitz boundary, then $\hh_Z = H^s(Z)$ and the norms are equivalent.

\end{proposition}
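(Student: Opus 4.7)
The plan is to establish the three assertions—that $k_s$ is a kernel, that its RKHS on a Lipschitz domain $Z$ coincides with $H^s(Z)$, and that the norms are equivalent—in three stages, following the classical route through Bochner's theorem and Sobolev extension.

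First, I would verify positive definiteness of $k_s$ on all of $\R^d$. The expression $c_s \|z\|^{s-d/2} \mathcal{K}_{s-d/2}(\|z\|)$ is (up to normalization) the Matérn radial function, whose Fourier transform is known in closed form—as a Hankel transform of a modified Bessel function—to be a strictly positive constant multiple of $(1+\|\omega\|^2)^{-s}$. Since $s > d/2$ this Fourier transform is integrable, so Bochner's theorem implies $k_s$ is a positive definite function on $\R^d$; restriction to $Z \times Z$ preserves positive definiteness, hence $k_s$ is a kernel on $Z$.

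Second, I would identify the native space $\hh(\R^d)$ of $k_s$ on the whole space. The standard Fourier characterization of translation-invariant RKHSs gives
\[
\hh(\R^d) \;=\; \Bigl\{\, f \in L^2(\R^d) \,:\, \int_{\R^d} (1+\|\omega\|^2)^s \, |\hat{f}(\omega)|^2 \, d\omega < \infty \,\Bigr\},
\]
equipped with inner product given by this weighted $L^2$ pairing in the Fourier domain. This is precisely the Bessel-potential definition of $H^s(\R^d)$; via Plancherel and the pointwise equivalence $(1+\|\omega\|^2)^s \asymp \sum_{|\alpha| \le s} \omega^{2\alpha}$ (valid for integer $s$, with fractional orders handled by interpolation), the RKHS norm is equivalent to the standard Sobolev norm on $\R^d$.

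Third, I would transfer the identification from $\R^d$ to the bounded Lipschitz domain $Z$. The RKHS of the restricted kernel admits the intrinsic description $\hh_Z = \{ f|_Z : f \in \hh(\R^d) \}$ with quotient norm $\|g\|_{\hh_Z} = \inf\{\|f\|_{\hh(\R^d)} : f|_Z = g\}$. The restriction operator $H^s(\R^d) \to H^s(Z)$ is trivially bounded, giving one inequality. For the reverse inequality I would invoke Stein's extension theorem: because $\partial Z$ is Lipschitz, there exists a bounded linear extension operator $E : H^s(Z) \to H^s(\R^d)$, producing, for every $g \in H^s(Z)$, a representative $f = Eg \in \hh(\R^d)$ with $\|f\|_{\hh(\R^d)} \lesssim \|g\|_{H^s(Z)}$. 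Combining the two bounds yields the two-sided equivalence between $\|\cdot\|_{H^s(Z)}$ and $\|\cdot\|_{\hh_Z}$, and in particular the set equality $\hh_Z = H^s(Z)$.

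The main obstacle is the third step: the Lipschitz hypothesis on $\partial Z$ is exactly what makes Stein's extension available, and without it one obtains only a continuous inclusion $\hh_Z \hookrightarrow H^s(Z)$ rather than equality of spaces. The other two steps rest on standard harmonic-analytic facts about the Matérn kernel and the Fourier description of translation-invariant RKHSs.
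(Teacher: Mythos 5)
Your argument is correct and is essentially the proof the paper relies on: the paper gives no proof of its own but cites \cite{wendland2004scattered}, where the result is established exactly along your lines (Fourier transform of the Matérn radial function proportional to $(1+\|\omega\|^2)^{-s}$ plus Bochner's theorem, the Fourier characterization of the native space as $H^s(\R^d)$, and restriction to $Z$ with a Stein-type extension operator for Lipschitz domains giving the norm equivalence). No gaps to report; your remark that the Lipschitz boundary is needed precisely for the extension step, and that otherwise one only gets a continuous inclusion, is also the correct reading of the hypothesis.
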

In the particular case of $s = d/2 + 1/2$, we have $k_s(z,z') = \exp( - \| z - z'\|)$.  Note that the constant $c_s$ is chosen such that $k_s(z,z) = 1$ for any $z \in Z$.

\section{Positive operator representation for Kantorovich potentials}\label{sec:sos-rep}

We start with the following representation result on the structure of the optimal potentials, which is one of our main contributions in this paper.

\begin{theorem}
\label{thm:SoS-OT}
    Let $\mu \in \mathcal{P}(X), \nu \in \mathcal{P}(Y)$ satisfying \Cref{assum:measures}\ref{assum:support_sets} and let $(u_\star,v_\star)$ be Kantorovich potentials such that $u_\star \in H^{s+2}(X)$ and $v_\star \in H^{s+2}(Y)$ for $s > d+1$. There exist functions $w_1, ..., w_d \in H^{s}(X \times Y)$ such that
    $$\tfrac{1}{2}\|x-y\|^2- u_\star(x) - v_\star(y) = \textstyle \sum_{i=1}^d w_i(x,y)^2, \quad \forall (x,y) \in X \times Y.$$
\end{theorem}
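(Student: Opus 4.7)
My plan is to exhibit the $w_i$'s as the components of $L(x,y)(y - T(x))$, where $T$ is the Brenier optimal transport map sending $\mu$ to $\nu$ and $L(x,y)$ is a smooth symmetric matrix square root. This is natural because $F(x,y) := c(x,y) - u_\star(x) - v_\star(y)$ is nonnegative and vanishes \emph{exactly} on the graph $\{y = T(x)\}$, so one expects $F$ to factor as a positive-definite quadratic form in the displacement $y - T(x)$. Setting $\psi(x) := \tfrac12\|x\|^2 - u_\star(x)$ and $\psi^*(y) := \tfrac12\|y\|^2 - v_\star(y)$, a direct calculation from the equality $c(x,T(x)) = u_\star(x) + v_\star(T(x))$ on the graph shows that $\psi^*$ is the Legendre--Fenchel conjugate of $\psi$, with $T = \nabla \psi$, $T^{-1} = \nabla \psi^*$, and $F$ is the Fenchel--Young residual $F(x,y) = \psi(x) + \psi^*(y) - \langle x, y\rangle$. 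Under \cref{assum:measures}\ref{assum:support_sets}, the convexity of $X,Y$ and the smoothness $u_\star, v_\star \in H^{s+2}$ give that $\psi, \psi^*$ are smoothly convex; in the setting of the paper (where Caffarelli-type regularity applies under the density assumptions of \cref{assum:measures}) I will use that $\nabla^2 \psi^* = I - \nabla^2 v_\star \succeq \kappa I$ uniformly on $Y$ for some $\kappa > 0$.

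Taylor-expanding $\psi^*$ around $y_0 = T(x)$ with integral remainder, and using $\nabla \psi^*(T(x)) = x$ together with $\psi^*(T(x)) = \langle x, T(x)\rangle - \psi(x)$ to cancel the zeroth- and first-order terms, I obtain
\begin{equation*}
F(x,y) = (y - T(x))^\top N(x,y)(y - T(x)), \quad N(x,y) := \int_0^1 (1-t)\bigl(I - \nabla^2 v_\star(T(x) + t(y - T(x)))\bigr)\,dt,
\end{equation*}
with $N(x,y) \succeq \tfrac{\kappa}{2} I$ uniformly on $X \times Y$. Letting $L(x,y) := N(x,y)^{1/2}$ denote the symmetric positive square root and defining $w_i(x,y) := [L(x,y)(y - T(x))]_i$ for $i = 1,\dots,d$, I immediately get $\sum_{i=1}^d w_i^2 = \|L(y - T(x))\|^2 = (y - T(x))^\top N(x,y)(y - T(x)) = F$, producing a decomposition with exactly $d$ squares.

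What remains---and is the main technical obstacle---is to verify $w_i \in H^s(X \times Y)$. The easy ingredients are: $T(x) = x - \nabla u_\star(x) \in H^{s+1}(X)$ so $y - T(x) \in H^{s+1}(X \times Y)^d$; the map $M \mapsto M^{1/2}$ is real-analytic on the open cone $\{M \succeq \tfrac{\kappa}{4} I\}$; and $H^s(X\times Y)$ is a Banach algebra because $s > d+1 > (2d)/2$. The genuinely delicate step is to show that $(x,y) \mapsto \nabla^2 v_\star\bigl((1-t) T(x) + t y\bigr)$ lies in $H^s(X \times Y)$ uniformly in $t \in [0,1]$: the outer factor $\nabla^2 v_\star$ is only $H^s$-regular on $Y$, while the inner map involves the merely Sobolev-regular Brenier map $T$. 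The hypothesis $s > d+1$ enters precisely here: via the embedding $T \in H^{s+1}(X) \hookrightarrow C^{s+1-d/2}(X)$, the map $T$ acquires enough classical regularity to apply a Runst--Sickel-type composition estimate in the fibre $y$ (which enters affinely, so a direct change of variables suffices) combined with Fa\`a di Bruno book-keeping in $x$. Once $N \in H^s$ has been established, $L \in H^s$ follows from the smoothness of the matrix square root on the uniformly positive-definite cone, and $w_i \in H^s$ follows from the algebra property of $H^s$ applied to the product $L(x,y)\,(y - T(x))$.
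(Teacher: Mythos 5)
Your algebraic skeleton is sound and is essentially the mirror image of the paper's: you Taylor-expand the conjugate potential $f^\star(y)=\tfrac12\|y\|^2-v_\star(y)$ in $y$ around $T(x)$, while the paper expands $f(x)=\tfrac12\|x\|^2-u_\star(x)$ in $x$ around $z$ after the substitution $y=T(z)$. The difference is not cosmetic, and it is exactly where your proposal has a genuine gap. In your parameterization the remainder matrix is $N(x,y)=\int_0^1(1-t)\,\nabla^2 f^\star\bigl((1-t)T(x)+ty\bigr)\,dt$, so the key claim $N\in H^s(X\times Y)$ requires composing the merely $H^s$-regular function $\nabla^2 v_\star$ with the inner map $(x,y)\mapsto(1-t)T(x)+ty$, which is itself only Sobolev through $T$, is not a diffeomorphism (it maps $2d$ variables to $d$), and degenerates as $t\to 0$. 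The tools you invoke do not cover this situation: the embedding $H^{s+1}(X)\hookrightarrow C^{k}$ only gives $k<s+1-d/2$, which is strictly less than $s$ for $d\geq 2$, so $T$ does not have classical regularity of order $s$; Runst--Sickel/Fa\`a di Bruno arguments need a smooth outer function (or a diffeomorphic inner map), whereas here the outer function is only $H^s$; and a change-of-variables patch (using $y$ for $t$ bounded away from $0$, $x$ for $t$ near $1$, say) has to be done with uniform control in $t$, since naive Jacobian bounds produce factors like $t^{-d/2}$ whose integral against $(1-t)\,dt$ diverges for $d\geq 2$. You correctly flag this as ``the main technical obstacle,'' but the sketch you give would not close it as stated.

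The paper's proof is organized precisely to avoid this composition: it first changes variables $y=T(z)$, so that the remainder matrix becomes $\mathbf{R}(x,z)=\int_0^1(1-t)\,\nabla^2 f\bigl((1-t)x+tz\bigr)\,dt$, whose argument is an \emph{affine} function of the coordinates; hence $\tilde w_i\in H^s(X\times X)$ follows from the algebra property and the smoothness of the matrix square root on the cone $\{\mathbf{A}\succeq\tfrac{\rho}{2}I\}$, with no Sobolev-with-Sobolev composition. Only at the end does it compose once with the single map $Q(x,y)=(x,T^{-1}(y))$, which \emph{is} a bi-Lipschitz $H^{s+1}$-diffeomorphism, and invokes a composition theorem for Sobolev functions under such changes of variables (Theorem 1.2 of \citealp{campbell2015weak}). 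If you want to keep your parameterization, the cleanest repair is to adopt the same device (prove the identity in $(x,z)$ or $(w,y)$ coordinates where the Hessian argument is affine, then pull back through the diffeomorphism). Two smaller points: the uniform bound $\nabla^2 f^\star\succeq\kappa I$ should not be justified by Caffarelli-type regularity of the densities, since the theorem only assumes \Cref{assum:measures}\ref{assum:support_sets} plus Sobolev regularity of the potentials; it follows instead, as in the paper, from the facts that $\nabla f$ is a diffeomorphism and $\nabla^2 f$ is continuous and bounded on the compact closure (an upper bound on $\nabla^2 f$ yields the lower bound on $\nabla^2 f^\star$). Also note the segment $(1-t)T(x)+ty$ may touch $\partial Y$ at $t=0$, so you need $\nabla^2 v_\star$ extended continuously to $\overline Y$, which the Sobolev embedding does provide.
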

\begin{proof}
Denote by $h$ the function $h(x,y) = c(x,y) - u_\star(x) - v_\star(y)$ for all $(x,y) \in X \times Y$. 
Let $f(x) = \frac{1}{2} \| x\|^2 - u_\star(x),  x \in X$. By Brenier's theoremfor quadratic optimal transport~\citep[Theorem 1.22]{brenier1987decomposition,santambrogio2015optimal},
\begin{enumerate}[(i)]
    \item\label{item:T_grad} $T = \nabla f$, where $T$ is the optimal transport map from $\mu$ to $\nu$,
    \item\label{item:psi_convex} $f$ is convex on $X$,
    \item\label{item:fenchel_young_decomp} $h$ is characterized by $h(x, y) = f(x) + f^\star(y) - x^\top y$, where $f^\star : y \in Y \mapsto \sup_{x\in X} {x}^\top{y} - f(x)$ is the Fenchel-Legendre conjugate of $f$. Moreover, $f^\star(y) = \frac{1}{2}\|y^2\| - v_\star(y)$.
 \end{enumerate}
%
%
Further, from the properties of Fenchel-Legendre conjugacy~\citep[Section 26]{rockafellar}, we have $T^{-1} = \nabla f^\star$. Hence, since $u_\star \in H^{s+2}(X)$ and $v_\star \in H^{s+2}(Y)$, we have

\begin{enumerate}[(i)]
    \setcounter{enumi}{3}
     \item\label{item:T_regular} $T = \nabla f$ (resp.\ $T^{-1} = \nabla f^\star$) is a $H^{s+1}$-diffeomorphism from $\overline{X}$ to $\overline{Y}$ (resp.\ $\overline{Y}$ to $\overline{X}$).
 \end{enumerate}
Since $f\in H^{s+2}(X)$ and $s > d/2$ and $X$ is a bounded open set with locally Lipschitz boundary (see Lemma~\ref{lm:uic-convex-set}), we have $H^{s+2}(X) \subset C^2(X)$ \citep{adams2003sobolev} and the Hessian ${\bf H}_f(x)$ is well defined for any $x \in X$.
Since, by \eqref{item:T_regular} , $T = \nabla f$ is a diffeomorphism, then, by Fenchel-Legendre conjugacy, $f$ is strictly convex~\citep{rockafellar}. Hence by compactness of~$\overline{X}$, $f$ has a Hessian~${\bf H}_f(x)$ which is bounded away from $0$. This implies: 
\begin{enumerate}[(i)]
    \setcounter{enumi}{4}   
     \item \label{item:bounded_hessian} There exists $\rho > 0$ such that ${\bf H}_f(x) \succeq \rho \Id$ for all $x \in X$.
\end{enumerate}
We will now use the decomposition \eqref{item:fenchel_young_decomp} along with a reparameterization of $h$ to obtain a decomposition as a sum of squares. Let 
\begin{equation}
   \tilde{h}(x, z) \defeq h(x, T(z)), \quad \forall (x, z) \in X \times X.
\end{equation}
The effect of this change of coordinates is illustrated in \Cref{fig:constraint_function}.
\begin{figure}[t]
    \centering
        \includegraphics[width = .49\textwidth]{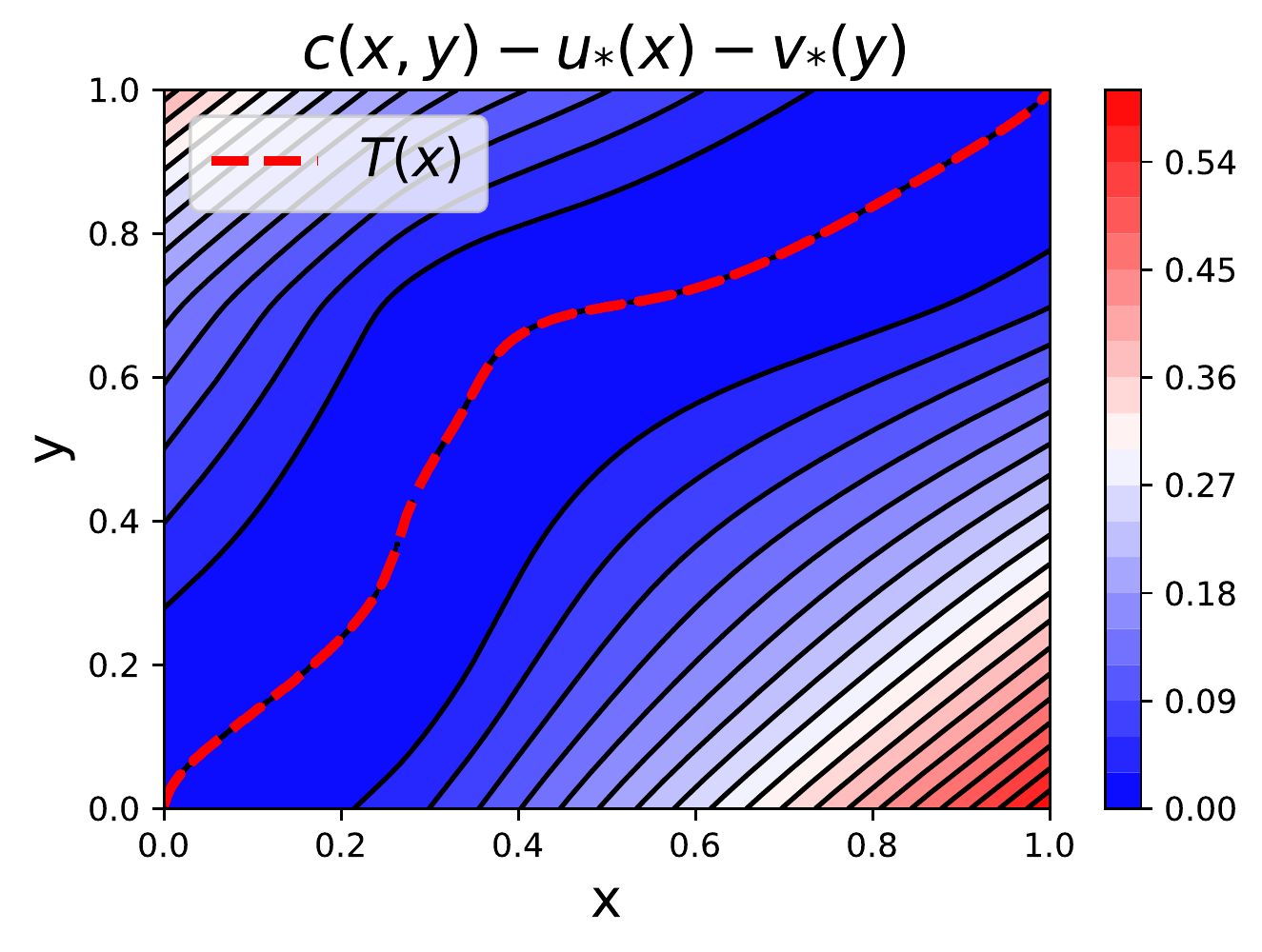}
        \includegraphics[width = .49\textwidth]{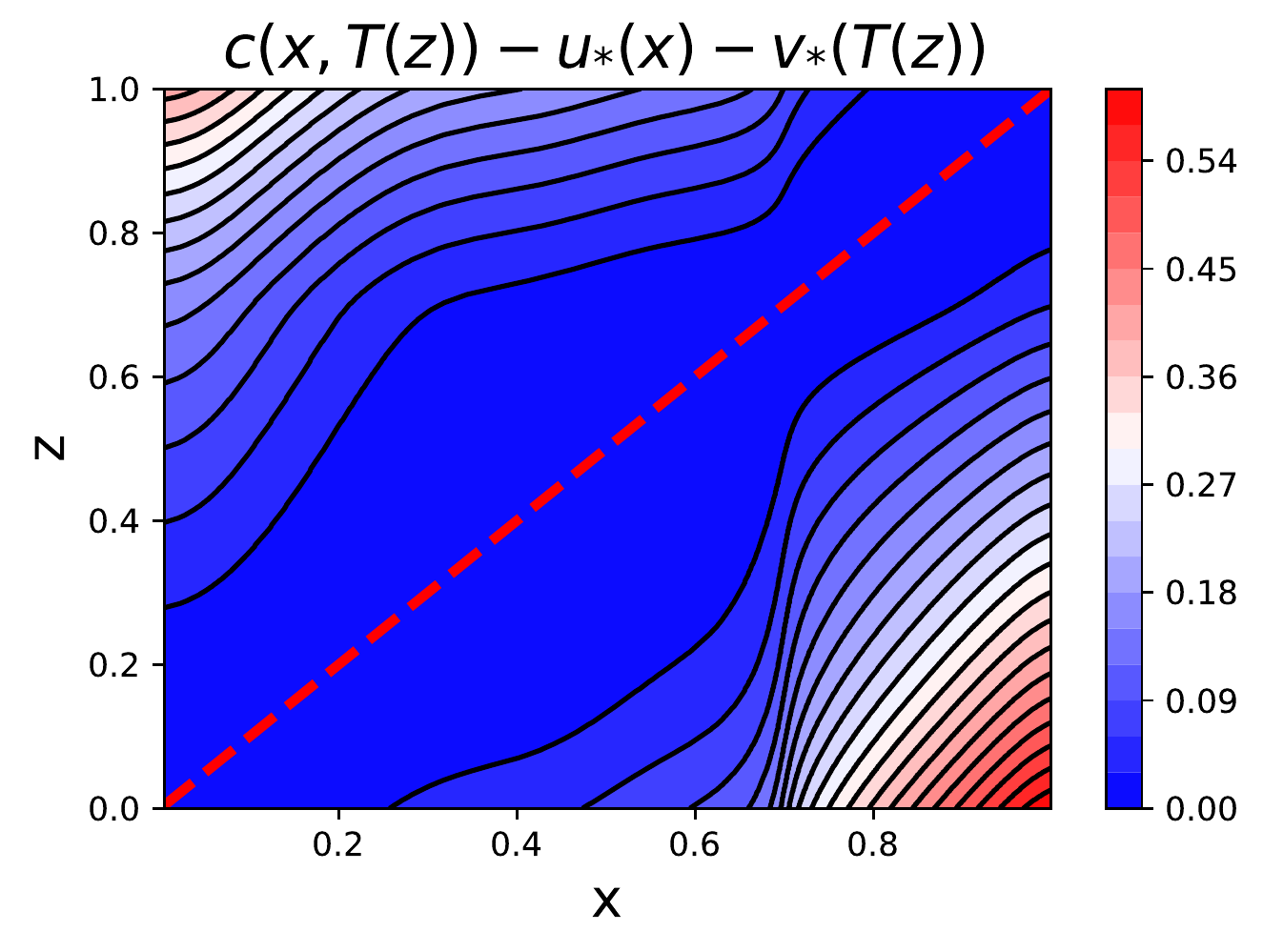}
        
        \vspace*{-.5cm}
        
    \caption{\textit{Left}: dual constraint function $h(x,y)$ corresponding to the measures in \Cref{fig:transport_map_fill,fig:transport_map_fill} in \Cref{sec:numerical}. Note that $h$ attains its minimum on the graph of transportation map $T$, and is elsewhere positive. \textit{Right}: changing parameterization flattens $h$ in the neighborhood of the graph of $T$. The red dotted line represent the zeros of both functions, and coincides with the graph of $T$ in the original parameterization (\textit{left}).}
    \label{fig:constraint_function}
\end{figure}
Since $f$ is differentiable, by the properties of the Fenchel-Legendre conjugate it holds that $f^\star(\nabla f(z)) = \nabla f(z)^\top z - f(z)$ for any $z \in X$~\citep{rockafellar}. Therefore, from \eqref{item:T_grad} we have that
\begin{equation}
    \tilde{h}(x, z) = f(x) - f(z) - \nabla f(z)^\top(x - z).
\end{equation}
%
Now, since $X$ is convex, we can characterize $f$ in terms of its Taylor expansion:
$$f(x) = f(z) + \nabla f(z)^\top(x - z) + (x - z)^\top \bR(x, z) (x - z), \quad \forall x,z \in X,$$
where $\bR$ is the integral reminder $\bR(x, z) \defeq \int_0^1 (1-t) \bH_f((1 - t)x + tz) \d t$. This implies
\begin{equation} 
    \tilde{h}(x, z) = (x - z)^\top \bR(x, z) (x - z), \quad \forall x,z \in X.
\end{equation}
From \eqref{item:bounded_hessian}, we get $\forall x, z, \bR(x, z) \succeq \frac{\rho}{2} \Id$. In particular, for all $x, z \in X$, the matrix $\bR(x, z)$ admits a positive square root $\sqrt{\bR(x, z)}$. Further, since $\sqrt{\cdot}$ is $C^\infty$ on the closed set $\{\bA \in \mathbb{S}_+(\Rd) : \bA \succeq \tfrac{\rho}{2} \Id\}$ and $\frac{\partial^2}{\partial x_i\partial x_j} f \in H^s(X)$ for all $i, j \in [d]$, the functions $r_{i,j}: (x, z) \mapsto e_i^\top\sqrt{\bR(x, z)}e_j$ are in $H^s(X\times X)$ for all $i, j \in [d]$ \citep[see Proposition 1 and Assumption 2b of ][]{rudi2020global},  where $(e_1, ..., e_d)$ is the canonical ONB of $\RR^d$. 
Define now the functions $$\tilde{w}_i(x, z) \defeq \sum_{j=1}^d r_{i,j}(x, z) (e_j^\top(x-z)), \quad  \forall x, z \in X, i \in [d].$$ From the above arguments, it holds that  $\tilde{w}_i\in H^s(X\times X), i\in [d]$, and $\tilde{h}(x, z) = \sum_{i=1}^d \tilde{w}^2_i(x, z)$.
Now, since $T$ is a $H^{s+1}$-diffeomorphism from $X$ to $Y$, changing parameterization again we have
$h(x, y) = \sum_{i=1}^d w^2_i(x, y), ~ \forall (x, y) \in X \times Y$,
with $w_i(x,y) = \tilde{w}_i(x, T^{-1}(y)), ~ \forall (x, y) \in X \times Y$.\vspace{0.2cm}

\noindent We conclude by proving that $w_i \in H^s(X \times Y)$ for all $i \in [d]$. Indeed, from \eqref{item:T_regular} $T^{-1}$ is a $H^{s+1}$-diffeomorphism from $\bar{Y}$ to $\bar{X}$ and it is bi-Lipschitz (since $T$ and $T^{-1}$ are Lipschitz due to the continuity of their Hessian and the boundedness of $X, Y$). Define the map $Q$ as $(x,y) \mapsto (x, T^{-1}(y))$ and note that $Q \in H^{s+1}(X \times Y, \R^{2d})$, by construction. Note that $\tilde{w}_i \in H^{s}(X \times X)$ and has bounded weak derivatives of order $1$, since $s > d+1$ and $H^s(X \times X)$ is bounded \citep{adams2003sobolev}. The conditions above on $\tilde{w}_i, Q$ guarantee that $w_i = \tilde{w}_i \circ Q$ belongs to $H^s(X \times Y)$  \citep[see, e.g., Theorem 1.2 of][]{campbell2015weak}.
\end{proof}
\cref{thm:SoS-OT} implies the existence of $A_\star \in \pdm{\hhxy}$ by effect of the reproducing property, when we consider a RKHS $\hhxy$ containing $H^s(X\times Y)$. The proof is in \cref{sect:proof-Astar}, \cpageref{sect:proof-Astar}.
\begin{corollary}\label{cor:Astar_exists}
    \label{prop:representation}
        Let $\hhxy$ be a RKHS such that $H^s(X\times Y) \subseteq \hhxy$. Under the hypothesis of \Cref{thm:SoS-OT}, there exists a positive operator $A_\star \in \pdm{\hhxy}$ with rank at most $d$, such that 
    \begin{equation}\label{eq:Astar-equality}
          c(x,y) - u_\star(x) - v_\star(y) = \scal{\phi(x,y)}{A_{*} \phi(x,y)}_\hhxy.
    \end{equation}
\end{corollary}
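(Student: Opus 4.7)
The plan is to lift each $w_i$ from $H^s(X\times Y)$ into $\hhxy$ and then assemble a finite-rank positive operator from these functions via the standard tensor construction. Concretely, by \cref{thm:SoS-OT}, there exist $w_1,\dots,w_d \in H^s(X\times Y)$ with
$$c(x,y) - u_\star(x) - v_\star(y) = \textstyle\sum_{i=1}^d w_i(x,y)^2, \qquad \forall (x,y) \in X\times Y.$$
The hypothesis $H^s(X\times Y) \subseteq \hhxy$ lets me view each $w_i$ as an element of the RKHS $\hhxy$, so the reproducing property gives $w_i(x,y) = \langle w_i, \phi(x,y)\rangle_\hhxy$ for every $(x,y) \in X \times Y$.

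Next I would define
$$A_\star \;\defeq\; \textstyle\sum_{i=1}^d w_i \otimes_\hhxy w_i,$$
where $w_i \otimes_\hhxy w_i$ denotes the rank-one operator $h \mapsto \langle w_i,h\rangle_\hhxy\, w_i$. This operator is self-adjoint, positive semi-definite, and has rank at most $d$ as a sum of $d$ rank-one positive operators. Plugging in the feature map and using the reproducing property yields
$$\langle \phi(x,y),\, A_\star \phi(x,y)\rangle_\hhxy \;=\; \textstyle\sum_{i=1}^d \langle w_i, \phi(x,y)\rangle_\hhxy^{\,2} \;=\; \textstyle\sum_{i=1}^d w_i(x,y)^2,$$
which equals $c(x,y) - u_\star(x) - v_\star(y)$ by \cref{thm:SoS-OT}, establishing \eqref{eq:Astar-equality}.

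The only non-routine point is to make sure that the set-theoretic inclusion $H^s(X\times Y) \subseteq \hhxy$ can be used to import the reproducing property of $\hhxy$ onto the $w_i$. Since both spaces are function spaces on $X\times Y$ whose elements are actual pointwise functions, the inclusion is compatible with pointwise evaluation, and the reproducing property in $\hhxy$ applies verbatim to any $w_i \in \hhxy$. The remaining facts — that $\sum_i w_i \otimes w_i$ is positive with rank at most $d$ — are standard Hilbert-space linear algebra.
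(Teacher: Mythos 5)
Your proposal is correct and follows essentially the same route as the paper: lift the functions $w_1,\dots,w_d$ from \cref{thm:SoS-OT} into $\hhxy$ via the inclusion, set $A_\star = \sum_{i=1}^d w_i \otimes w_i$, and use the reproducing property to verify \cref{eq:Astar-equality} and the rank bound. No gaps.
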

Finally, the following corollary shows the effect of \cref{assum:measures} on the existence of $A_\star$. Note indeed that such an assumption implies smoothness of the Kantorovich potentials \citep{philippis2013mongeampre}. If $m > d$ they are smooth enough to apply \cref{thm:SoS-OT} and the corollary above. The proof of the following corollary is in \cref{sect:proof-Astar}, \cpageref{sect:proof-Astar}.
\begin{corollary}[Effects of Asm.~\ref{assum:measures}]\label{cor:asm1-Astar}
Let $\mu, \nu$ satisfy \cref{assum:measures} for $m > d$. Let $(u_\star, v_\star)$ be Kantorovich potentials for $\mu, \nu$. Let $\hhx = H^{m+3}(X), \hhy = H^{m+3}(Y), \hhxy = H^{m}(X \times Y)$. Then $u_\star \in \hhx, v_\star \in \hhy$ and there exists $A_\star \in \pdm{\hhxy}$ satifying \cref{eq:Astar-equality} and $\operatorname{rank} A_\star \leq d$.
\end{corollary}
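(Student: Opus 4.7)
The plan is to combine the sum-of-squares representation from Corollary~\ref{cor:Astar_exists} with the classical regularity theory of optimal transport. Corollary~\ref{cor:Astar_exists} already delivers the operator $A_\star$ of rank at most $d$ whenever the potentials satisfy $u_\star \in H^{s+2}(X)$ and $v_\star \in H^{s+2}(Y)$ for some $s > d+1$ and $H^s(X\times Y) \subseteq \hhxy$. So the task reduces to (i) deducing enough smoothness of the Kantorovich potentials from \cref{assum:measures}, and (ii) picking $s$ consistently with the choice $\hhxy = H^m(X\times Y)$.

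First, I would invoke the Caffarelli-type boundary regularity theorem of \citet{caffarelli1992regularity}, as extended by \citet{philippis2013mongeampre}. Under \cref{assum:measures}, the supports $X, Y$ are bounded, open, and convex, and the densities are bounded away from zero with Lipschitz derivatives up to order $m$, i.e. $C^{m,1}$ up to the boundary. This places us in the setting where the Brenier potential $f(x) = \tfrac{1}{2}\|x\|^2 - u_\star(x)$ is $C^{m+2,1}(\overline{X})$ and, by Fenchel conjugacy, $f^\star$ is $C^{m+2,1}(\overline{Y})$. Since $X$ and $Y$ are bounded, we have the continuous inclusions $C^{m+2,1}(\overline{X}) \subset W^{m+3,\infty}(X) \subset H^{m+3}(X) = \hhx$ (and similarly for $Y$), giving $u_\star \in \hhx$ and $v_\star \in \hhy$.

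Second, I would choose $s$ so that Corollary~\ref{cor:Astar_exists} applies with $\hhxy = H^m(X\times Y)$. Since $m > d$, setting $s = \max(m, d+2)$ satisfies $s > d+1$ and $s \geq m$, so $H^s(X\times Y) \subseteq H^m(X\times Y) = \hhxy$. Moreover, $s \leq m+1$ (because $m > d$ forces $d+2 \leq m+1$), so $s+2 \leq m+3$ and hence $u_\star \in H^{m+3}(X) \subseteq H^{s+2}(X)$, and likewise $v_\star \in H^{s+2}(Y)$. Applying Corollary~\ref{cor:Astar_exists} at this $s$ produces a positive operator $A_\star \in \pdm{\hhxy}$ of rank at most $d$ satisfying \eqref{eq:Astar-equality}, which is exactly the claim.

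The main obstacle is not the Sobolev-index bookkeeping but invoking regularity \emph{up to the boundary}: interior $C^{m+2,1}$ regularity would not suffice to land inside $H^{m+3}(X)$ globally, so one genuinely needs the Caffarelli/De Philippis--Figalli boundary theory, whose hypotheses (convex supports, densities bounded and bounded away from zero) are built into \cref{assum:measures}. Once this is cited correctly, the rest of the argument is a direct application of the representation theorem already proved.
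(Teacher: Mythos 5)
Your argument is correct and follows essentially the same route as the paper: both deduce $u_\star \in C^{m+2,1}(\overline{X})$, $v_\star \in C^{m+2,1}(\overline{Y})$ from the boundary regularity theory of \citet{philippis2013mongeampre}, embed these spaces into $H^{m+3}$ using boundedness (and convexity) of the supports, and then apply Corollary~\ref{cor:Astar_exists} with an admissible index ($s=m+1$ in the paper; your $s=\max(m,d+2)\leq m+1$ works just as well). The only cosmetic difference is that the paper obtains the regularity of $v_\star$ directly from the (symmetric) regularity theorem rather than via Fenchel conjugacy, which does not affect correctness.
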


\section{Subsampling the constraints}\label{sec:sample_bounds}

Given $\ell \in \NN$ and a set of points $\tilde{Z}_\ell = \{(\tilde{x}_1, \tilde{y}_1),\dots, (\tilde{x}_\ell, \tilde{y}_\ell)\}$, define the \emph{fill distance} \citep{wendland2004scattered},
\begin{equation}\label{eq:fill-distance}
h_\ell = \sup_{x \in X, y \in Y} \min_{j \in [\ell]} \|(x,y) - (\tilde{x}_j, \tilde{y}_j)\|.
\end{equation}
The following theorem, that is an adaptation of Theorem 4 from \cite{rudi2020global}, quantifies the error of subsampling the constraints in terms of the fill distance.
\begin{theorem}\label{thm:eps-exists}
Let $X, Y$ satisfy \Cref{assum:measures}\ref{assum:support_sets}. Let $s \geq 3$ and $s > d$. Let $\hhx \subseteq H^s(X), \hhy \subseteq H^s(Y), \hhxy = H^s(X \times Y)$. Let $\tilde{Z}_\ell \subset X \times Y$ be a set of points of cardinality $\ell$ and fill distance $h_\ell$ and let $u \in \hhx, v \in \hhy, A \in \pdm{\hhxy}$ satisfy
\begin{equation}\label{eq:constr-ineq}
    c(\tilde{x}_j,\tilde{y}_j) - u(\tilde{x}_j) - v(\tilde{y}_j) = \scal{\phi(\tilde{x}_j,\tilde{y}_j)}{A \phi(\tilde{x}_j, \tilde{y}_j)}_\hhxy, \quad \forall j \in [\ell].
\end{equation}
There exist $h_0, C_0$ depending only on $s, d, X, Y$ such that, when $h_\ell \leq h_0$, then
$$c(x,y) \geq u(x) + v(y) - \eps, ~~~ \forall x,y \in X \times Y, \quad \textrm{where} \quad \eps = Q h^{s-d}_\ell,$$
where $Q = C_0 (\|u\|_\hhx + \|v\|_\hhy + \tr(A))$. Note that $h_0, C_0$ depend only on $d,m,X,Y$.
\end{theorem}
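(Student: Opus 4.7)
The plan is to adapt the argument of Theorem 4 in \cite{rudi2020global} to our bilinear dual setting. Define the \emph{slack function}
\[
e(x,y) \;\defeq\; c(x,y) - u(x) - v(y) - \scal{\phi(x,y)}{A\phi(x,y)}_{\hhxy}, \qquad (x,y)\in X\times Y.
\]
By hypothesis \eqref{eq:constr-ineq}, $e$ vanishes at each of the $\ell$ points of $\tilde Z_\ell$. Since we may choose $\hhxy = H^s(X\times Y)$ by \Cref{ex:sobolev-kernel}, and since $\hhx \subseteq H^s(X)$, $\hhy\subseteq H^s(Y)$, the four pieces of $e$ all lie in $H^s(X\times Y)$ (noting that a function depending only on $x$ sits in $H^s(X\times Y)$ with norm comparable to its $H^s(X)$ norm, times a constant depending on $|Y|$). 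Thus $e\in H^s(X\times Y)$. First step: bound $\|e\|_{H^s(X\times Y)}$; second step: apply a scattered-zeros/sampling inequality for Sobolev functions; third step: conclude.

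\paragraph{Bounding $\|e\|_{H^s(X\times Y)}$.}
The cost $c(x,y)=\tfrac12\|x-y\|^2$ is a fixed polynomial on the bounded set $X\times Y$, hence $\|c\|_{H^s(X\times Y)}\leq C_c$ for a constant depending only on $s,d,X,Y$. The terms $u(x)$ and $v(y)$, viewed as functions on $X\times Y$, satisfy $\|u\|_{H^s(X\times Y)}\leq C_u\|u\|_{\hhx}$ and similarly for $v$, where $C_u,C_v$ depend only on $s,d,X,Y$ (by the continuous embeddings and a tensor-product argument). The delicate term is the quadratic form in $A$: writing the spectral decomposition $A=\sum_i \sigma_i\, \psi_i\otimes\psi_i$ with $\psi_i\in\hhxy$ of unit norm and $\sigma_i\geq 0$, the reproducing property gives
\[
\scal{\phi(x,y)}{A\phi(x,y)}_{\hhxy} \;=\; \textstyle \sum_i \sigma_i\, \psi_i(x,y)^2.
\]
Because $s>d=\tfrac{\dim(X\times Y)}{2}$, the Sobolev space $H^s(X\times Y)$ is a Banach algebra, so $\|\psi_i^2\|_{H^s(X\times Y)} \leq C_{\text{alg}}\|\psi_i\|_{H^s(X\times Y)}^2 \leq C_{\text{alg}}'\|\psi_i\|_{\hhxy}^2 = C_{\text{alg}}'$, where we used the norm-equivalence in \Cref{ex:sobolev-kernel}. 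Summing yields
\[
\bigl\|\scal{\phi(\cdot,\cdot)}{A\phi(\cdot,\cdot)}\bigr\|_{H^s(X\times Y)} \;\leq\; C_{\text{alg}}'\, \textstyle \sum_i\sigma_i \;=\; C_{\text{alg}}'\,\tr(A).
\]
Combining, $\|e\|_{H^s(X\times Y)} \leq C_1\bigl(1 + \|u\|_{\hhx} + \|v\|_{\hhy} + \tr(A)\bigr)$ for a constant $C_1$ depending only on $s,d,X,Y$.

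\paragraph{Scattered-zeros/sampling inequality and conclusion.}
Since $X$ and $Y$ are bounded convex open sets, $X\times Y$ is a bounded Lipschitz domain in $\R^{2d}$ (Lemma~\ref{lm:uic-convex-set}), on which the Wendland sampling inequality for Sobolev functions applies: there exist $h_0,C_0'>0$ depending only on $s,d,X,Y$ such that for every $f\in H^s(X\times Y)$ vanishing on a set of fill distance $h\leq h_0$,
\[
\|f\|_{L^\infty(X\times Y)} \;\leq\; C_0'\, h^{\,s-d}\,\|f\|_{H^s(X\times Y)},
\]
the exponent $s-d$ arising from $s - \tfrac{\dim(X\times Y)}{2}$. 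Applying this to $f=e$ and combining with the bound above gives $\|e\|_{L^\infty(X\times Y)}\leq \eps$ with $\eps = C_0 h_\ell^{s-d}(\|u\|_{\hhx}+\|v\|_{\hhy}+\tr(A))$ (absorbing the $1$ of the previous estimate into the other constants via a harmless rescaling, possibly after shrinking $h_0$). Finally, since $A\succeq 0$, $\scal{\phi(x,y)}{A\phi(x,y)}_{\hhxy}\geq 0$, so for all $(x,y)\in X\times Y$,
\[
c(x,y)-u(x)-v(y) \;=\; e(x,y) + \scal{\phi(x,y)}{A\phi(x,y)}_{\hhxy} \;\geq\; -|e(x,y)| \;\geq\; -\eps,
\]
which is the claim.

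\paragraph{Main obstacle.}
The routine ingredients (definition of $e$, nonnegativity of the quadratic form, reducing $c$ to a constant Sobolev norm) are easy. The subtle point is the bound $\|\scal{\phi}{A\phi}\|_{H^s(X\times Y)}\leq C\tr(A)$: it requires both the algebra property of $H^s$ for $s>d$ (hence the hypothesis $s>d$) and the norm-equivalence $\hhxy\simeq H^s(X\times Y)$ from the Sobolev-kernel construction. Once this is in place, the scattered-zeros inequality of Wendland supplies the $h_\ell^{s-d}$ rate and everything glues together.
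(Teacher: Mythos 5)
Your overall strategy is the same as the paper's: form the slack function $e = c - u - v - \scal{\phi}{A\phi}_\hhxy$, note it lies in $H^s(X\times Y)$ and vanishes at the $\ell$ constraint points, control the trace term through the Banach-algebra property of $H^s$ on the $2d$-dimensional product (this is exactly what the cited Lemma 9 of \cite{rudi2020global} delivers), invoke the Narcowich--Ward--Wendland scattered-zeros inequality on the Lipschitz domain $X\times Y$ to get the $h_\ell^{\,s-d}$ rate, and finish with $\scal{\phi}{A\phi}_\hhxy \geq 0$. All of that is sound and matches the paper.

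There is, however, one genuine gap: your treatment of the cost term. You bound $\|c\|_{H^s(X\times Y)} \leq C_c$ and end up with $\|e\|_{H^s} \leq C_1\bigl(1 + \|u\|_\hhx + \|v\|_\hhy + \tr(A)\bigr)$, and then claim the additive $1$ can be ``absorbed into the other constants via a harmless rescaling, possibly after shrinking $h_0$.'' This step is not justified: the target bound is purely multiplicative in $\|u\|_\hhx + \|v\|_\hhy + \tr(A)$, and this quantity has no a priori lower bound, so no choice of $C_0$ depending only on $s,d,X,Y$ absorbs the constant term; shrinking $h_0$ does not help either, since both terms carry the same factor $h_\ell^{\,s-d}$. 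The paper avoids this issue entirely by applying the sampling inequality with the Sobolev \emph{seminorm} $|f|_{H^s(X\times Y)} = \sum_{|\alpha|=s}\|D^\alpha f\|_{L^2}$ rather than the full norm: since $c$ is a quadratic polynomial and $s \geq 3$, one has $|c|_{H^s(X\times Y)} = 0$, so the cost contributes nothing. Note that you never use the hypothesis $s \geq 3$, which is a sign that this is exactly the point being missed. Your argument can be repaired either by switching to the seminorm form of the inequality (as in the paper), or by deriving from the constraints themselves a lower bound on $\|u\|_\hhx + \|v\|_\hhy + \tr(A)$ (using that some sampled point has $c(\tilde{x}_j,\tilde{y}_j)$ bounded away from $0$ once $h_\ell \le h_0$, together with boundedness of the evaluation functionals), but as written the absorption claim does not stand.
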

The proof of the theorem above is reported in \cref{sect:proof-subs}, \cpageref{sect:proof-subs}. Using the theorem above we are able to show that, given a maximizer $(\hat{u}, \hat{v}, \hat{A})$ of Problem~\ref{eq:w_hat} the couple $(\hat{u}-\eps/2, \hat{v}-\eps/2)$ is admissible for Problem~\ref{EqDualOT}. This is a crucial step to bound $|\hOT - \OT|$ in terms of the fill distance $h_\ell$ and it is stated next.
\begin{theorem}\label{thm:sample_bound}
Let $\mu, \nu$ and $X, Y \subset \R^d$ satisfy \cref{assum:measures}\ref{assum:support_sets}. Let $s > d$ and let $\hhx, \hhy, \hhxy$ be RKHS on $X,Y, X\times Y$ such that $\hhx \subseteq H^s(X), \hhy \subseteq H^s(Y), \hhxy = H^s(X\times Y)$. Assume that there exist two Kantorovich potentials $u_\star, v_\star$ such that $u_\star \in \hhx, v_\star \in \hhy$ and there exists $A_\star \in \pdm{\hhxy}$ that satisfies \cref{eq:Astar-equality}. Let $\hOT$ be computed according to \cref{eq:hat_OT} using a set of $\ell \in \N$ points $\tilde{Z}_\ell \subset X \times Y$ with fill distance $h_\ell$. Let $h_0, C_0$ as in \cref{thm:eps-exists}. Let
$$ \eta = C_0 h^{s-d}_\ell, \quad \gamma = \|w_\mu - \hat{w}_\mu\|_\hhx + \|w_\nu - \hat{w}_\nu\|_\hhy.$$
When $h_\ell \leq h_0$, $\la_2 > 0$ and $\la_1$ is chosen such that $\la_1 \geq 2\eta$, then
$$|\hOT - \OT| ~\leq~ 6\la_1\tr(A_\star) ~+~ 6~\tfrac{(\gamma + \eta)^2}{\la_2} + 6~ \la_2~ \big(\|u_\star\|^2_\hhx + \|v_\star\|^2_\hhy).$$
\end{theorem}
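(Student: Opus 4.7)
The plan is to establish two matching one-sided bounds on $\hOT - \OT$, then combine them. Write $G(u,v,A) = \scal{u}{\hat w_\mu}_\hhx + \scal{v}{\hat w_\nu}_\hhy - \la_1 \tr(A) - \la_2(\|u\|_\hhx^2 + \|v\|_\hhy^2)$ for the penalized objective of \eqref{eq:w_hat}, so that $\hOT = G(\hat u, \hat v, \hat A) + \la_1 \tr(\hat A) + \la_2(\|\hat u\|_\hhx^2 + \|\hat v\|_\hhy^2) \geq G(\hat u, \hat v, \hat A)$. Note that $(u_\star, v_\star, A_\star)$ is feasible for \eqref{eq:w_hat}, since \cref{eq:Astar-equality} holds on all of $X\times Y$ and a fortiori at the sampled points.

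The easy direction is to upper bound $\OT - \hOT$. Optimality yields $\hOT \geq G(\hat u, \hat v, \hat A) \geq G(u_\star, v_\star, A_\star)$. Splitting $\scal{u_\star}{\hat w_\mu}_\hhx = \scal{u_\star}{w_\mu}_\hhx + \scal{u_\star}{\hat w_\mu - w_\mu}_\hhx$ and similarly for $v_\star$, applying Cauchy--Schwarz to the correction terms and using the Young inequality $\gamma(\|u_\star\|_\hhx + \|v_\star\|_\hhy) \leq \gamma^2/\la_2 + \la_2(\|u_\star\|_\hhx^2 + \|v_\star\|_\hhy^2)$, I obtain $\OT - \hOT \leq \gamma^2/\la_2 + \la_1 \tr(A_\star) + 2\la_2(\|u_\star\|_\hhx^2 + \|v_\star\|_\hhy^2)$.

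For the other direction, I would invoke \cref{thm:eps-exists} on $(\hat u, \hat v, \hat A)$, which satisfies the $\ell$ equality constraints by construction. This produces the pointwise slackness
\begin{equation}
c(x,y) \;\geq\; \hat u(x) + \hat v(y) - \delta, \qquad \delta \;=\; \eta\,(\|\hat u\|_\hhx + \|\hat v\|_\hhy + \tr(\hat A)),
\end{equation}
so $(\hat u - \delta/2,\, \hat v - \delta/2)$ is admissible for \eqref{EqDualOT}, which gives $\scal{\hat u}{w_\mu}_\hhx + \scal{\hat v}{w_\nu}_\hhy \leq \OT + \delta$. Passing from $w$ to $\hat w$ via Cauchy--Schwarz then yields
\begin{equation}
\hOT - \OT \;\leq\; \eta\,\tr(\hat A) + (\eta + \gamma)\,(\|\hat u\|_\hhx + \|\hat v\|_\hhy).
\end{equation}

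The main obstacle is that this bound involves the estimator quantities $\tr(\hat A), \|\hat u\|_\hhx, \|\hat v\|_\hhy$, which must themselves be controlled. To close the loop I would return to $G(\hat u,\hat v,\hat A) \geq G(u_\star, v_\star, A_\star)$ and substitute the just-derived upper bound on $\scal{\hat u}{w_\mu}_\hhx + \scal{\hat v}{w_\nu}_\hhy$ after absorbing the extra $\gamma(\|\hat u\|_\hhx + \|\hat v\|_\hhy)$ from replacing $\hat w$ by $w$. Rearranging produces
\begin{equation}
(\la_1 - \eta)\tr(\hat A) + \la_2(\|\hat u\|_\hhx^2 + \|\hat v\|_\hhy^2) \;\leq\; \la_1 \tr(A_\star) + \la_2(\|u_\star\|_\hhx^2 + \|v_\star\|_\hhy^2) + \gamma(\|u_\star\|_\hhx + \|v_\star\|_\hhy) + (\eta+\gamma)(\|\hat u\|_\hhx + \|\hat v\|_\hhy).
\end{equation}
The hypothesis $\la_1 \geq 2\eta$ is exactly what is needed to retain $\la_1 \tr(\hat A)/2$ on the left, and two further Young inequalities of the form $ab \leq a^2/\la_2 + \la_2 b^2 /2$ absorb the two mixed products $(\eta+\gamma)(\|\hat u\|_\hhx+\|\hat v\|_\hhy)$ and $\gamma(\|u_\star\|_\hhx+\|v_\star\|_\hhy)$ into $\la_2(\|\hat u\|_\hhx^2+\|\hat v\|_\hhy^2)/2$ and $\la_2(\|u_\star\|_\hhx^2+\|v_\star\|_\hhy^2)/2$ at the expense of $(\eta+\gamma)^2/\la_2$ and $\gamma^2/\la_2$ respectively. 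This gives a bound on $\la_1 \tr(\hat A)$ and $\la_2(\|\hat u\|_\hhx^2 + \|\hat v\|_\hhy^2)$ by a constant multiple of $(\eta+\gamma)^2/\la_2 + \la_1 \tr(A_\star) + \la_2(\|u_\star\|_\hhx^2+\|v_\star\|_\hhy^2)$. Plugging this back into the upper bound on $\hOT - \OT$, using $\eta\tr(\hat A) \leq \la_1 \tr(\hat A)/2$ and one final Young step on $(\eta+\gamma)(\|\hat u\|_\hhx + \|\hat v\|_\hhy)$, yields both-sided control with the universal constant $6$.
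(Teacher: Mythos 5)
Your proposal is correct and follows essentially the same route as the paper's proof: one side from optimality of $(\hat u,\hat v,\hat A)$ against the feasible triple $(u_\star,v_\star,A_\star)$, the other from \cref{thm:eps-exists} making $(\hat u-\delta/2,\hat v-\delta/2)$ admissible for \eqref{EqDualOT}, with Cauchy--Schwarz controlling the mean-embedding error and the condition $\la_1\geq 2\eta$ letting you absorb the mixed terms to bound $\tr(\hat A)$ and $\|\hat u\|_\hhx,\|\hat v\|_\hhy$ (you use Young's inequality where the paper completes the square, a cosmetic difference). The only step you leave implicit is the existence of a maximizer of \eqref{eq:w_hat}, which the paper establishes separately via a representer-theorem and coercivity argument (Lemma~\ref{lm:prob_w_hat_has_minimizer}).
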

The proof of the theorem above is in \cref{sect:proof-subs}, \cpageref{sect:proof-subs}. \cref{thm:sample_bound} together with \cref{thm:SoS-OT} (in particular Corollary~\ref{cor:asm1-Astar}) allow to prove  \cref{thm:uniform}. To bound $h_\ell$ in terms of $\ell$, we used a result recalled in Lemma~\ref{lm:uic-sampling}.\vspace{0.2cm}
\begin{proof}{\bf of \cref{thm:uniform}.} First note that $k_X = k_{m+1}, k_Y = k_{m+1}, k_{XY} = k_m$ imply that $\hhx = H^{m+1}(X), \hhy = H^{m+1}(X), \hhxy = H^{m}(X \times Y)$. Then, by Corollary~\ref{cor:asm1-Astar} we have that under \cref{assum:measures} for any Kantorovich potentials $u_\star, v_\star$ there exists a finite rank $A_\star \in \SS_+(\hhxy)$ such that $c(x,y) - u_\star(x) - v_\star(y) = \scal{\phi(x,y)}{A_{*} \phi(x,y)}_\hhxy$, and that $u_\star \in H^{m+3}(X) \subseteq H^{m+1}(X) = \hhx$ and analogously $v_\star \in \hhy$. Among them, we select the triplet minimizing $\|u_\star\|^2_{\hhx} + \|v_\star\|^2_{\hhy} + \tr(A_\star)$ and we denote by $Q_{\mu, \nu}$ the resulting minimum. The proof is obtained by using this triplet in Theorem~\ref{thm:sample_bound} and bounding $h_\ell$ as follows. First note that $X \times Y$ is a convex bounded set, since $X, Y$ have the same property. As recalled in Lemma~\ref{lm:uic-convex-set}, \cpageref{lm:uic-convex-set} of the appendix, convex bounded sets have the so-called {\em uniform interior cone condition}. This guarantees that $\ell$ i.i.d. points sampled from a distribution $\rho$, that has a density bounded away from zero, achieve the following bound on the fill distance: $h_\ell \leq (C \ell^{-1} \log(C' \ell/\delta))^{-1/(2d)}$, with probability at least $1-\delta$, when $\ell \geq \ell_0$. Here $\ell_0, C, C'$ are constants that depend only on $d, X \times Y$ and the constant $c_0$ for which the density of $\rho$ is bounded away from zero. The final constants $C_1, C_2$ depend also on $Q_{\mu,\nu}$. 
\end{proof}
\noindent We conclude with the following remark that is useful when we do not know the supports $X, Y$ or we are not able to sample i.i.d.~points from the uniform distribution on them. 
\begin{remark}[Sampling from $X \times Y$ using $\mu \otimes \nu$]\label{rem:sampling-munu} Since, under \cref{assum:measures}, we have $\mu$ and $\nu$ bounded away from $0$, we can compute $\hOT$ by using $\ell$  i.i.d.~samples from $\rho = \mu \otimes \nu$, obtaining the same guarantees as Theorem~\ref{thm:uniform}. Indeed, by inspecting the proof above, it is only required that $\rho$ has support $X\times Y$, and has a density that is bounded away from $0$. However, the constants $\ell_0, C_1, C_2$ will depend also on how far the density of $\rho$ is bounded away from $0$. 
\end{remark}

\section{Estimators for the kernel mean embeddings of $\mu, \nu$}\label{sec:mean_estimators}
In this section we consider three classes of estimators $(\hat{w}_\mu, \hat{w}_\nu)$ for the kernel mean embeddings $w_\mu \in \hhx$ and $w_\nu \in \hhy$ defined as $w_\mu = \int \phi_X(x) d\mu(x)$ and $ w_\nu = \int \phi_Y(y) d\nu(y)$.
 As we observed in the introduction to \cref{eq:w_hat_dual}, the operations we need to perform on $\hat{w}_\mu, \hat{w}_\nu$ to compute the algorithm are the evaluation of the norm $\|\hat{w}_\mu\|^2_\hhx$ and the evaluation of $\hat{w}_\mu(\tilde{x}_i)$ for $i \in [\ell]$ (and the same for $\hat{w}_\nu$). For each class we will specify such operations. Here, we assume that $\phi_X, \phi_Y$ are uniformly bounded maps (as for the Sobolev kernel, where $\sup_{x \in X} \|\phi_X(x)\|^2_\hhx = \sup_{x \in X} k(x,x) = 1$, see Proposition~\ref{ex:sobolev-kernel}).  
Clearly a class of estimators must only be chosen if we are able to perform the required operations. 
 
\paragraph{Exact integral.} Here we take $\hat{w}_\mu := w_\mu$ and $\hat{w}_\nu = w_\nu$ and we report only the operations for $\mu$ since the ones for $\nu$ are the same. This is the estimator that leads to the best rates as shown in \Cref{thm:complexities} and Corollary~\ref{cor:costs}. However it requires to perform the most difficult operations, i.e., (1) $\hat{w}_\mu(\tilde{x}_i) = \scal{w_\mu}{\phi_X(\tilde{x}_i)}_\hhx = \int_X \scal{\phi_X(\tilde{x}_i)}{\phi_X(x)}d\mu(x) = \int_X k_X(\tilde{x}_i, x) d\mu(x)$ for all $i \in [\ell]$; (2) $\|\hat{w}_\mu\|^2_\hhx = \scal{w_\mu}{w_\mu}_\hhx = \int_X \scal{\phi_X(x)}{\phi_X(x')}_\hhx d\mu(x) d\mu(x') = \int_X k_X(x,x') d\mu(x) d\mu(x')$.
Moreover the costs $C, E$ in \cref{eq:cost} are $C = O(1), E = O(1)$.

\paragraph{Evaluation estimator.} Here we assume we are able to evaluate $\mu(x_j)$ in a given set of points $x_1, \dots, x_{n_\mu}$ with $n_\mu \in \N$ (analogously for $\nu$ on $y_1,\dots, y_{n_\nu}$ with $n_\nu \in \N$). We define the estimators as $\hat{w}_\mu = \int_X \phi_X(x) \hat{g}_\mu(x) dx$ (analogously for $\hat{w}_\nu$). Here $\hat{g}_\mu \in \hhx$ is the {\em kernel least squares} estimator \citep{narcowich2005sobolev} of the density $\mu$ defined as $\hat{g}_\mu(x) = \sum_{j \in [n_\mu]} \alpha_j k_X(x_j, x)$ where $\alpha = {\bf K}_X^{-1} c_\mu$ and ${\bf K}_X \in \RR^{n_\mu \times n_\mu}$,  $({\bf K}_X)_{i,j} = k_X(x_i, x_j)$ for all $i,j \in [n_\mu]$, while $c_\mu \in \R^{n_\mu}$, $c_\mu = (\mu(x_1), \dots, \mu(x_n))$. In this case the steps are: 
(1) $\hat{w}_\mu(\tilde{x}_i) = \int_X \scal{\phi_X(\tilde{x}_i)}{\phi_X(x)}\hat{g}_\mu(x)dx =  \sum_{j \in [n_\mu]} \alpha_j \int_X k_X(\tilde{x}_i,x)k_X(x_j, x) dx,  i \in [\ell]$, and (2)  $\|\hat{w}_\mu\|^2_\hhx = \sum_{i,t \in [n_\mu]} \alpha_i \alpha_t \int_X \int_X k_X(x_i,x)k(x,x')k(x_t,x') dx dx'$.
Moreover the costs $C, E$ in \cref{eq:cost} are $C = O(n_\mu^3+n_\nu^3), E = O(n_\mu + n_\nu)$.

\paragraph{Sample estimator.} Here we assume we are able to sample from $\mu, \nu$. Let 
$x_1,\dots, x_{n_\mu}$, $n_\mu \in \N$, be sampled i.i.d.~from $\mu$ and $y_1,\dots, y_{n_\nu}$, $n_\nu \in \N$  be sampled i.i.d.~from $\nu$. The estimators are $\hat{w}_\mu = \frac{1}{n_\mu} \sum_{j \in [n_\mu]} \phi_X(x_j)$, and analogously for $\hat{w}_\nu$. In this case the operations are:  (1) $\hat{w}_\mu(\tilde{x}_i) = \frac{1}{n_\mu} \sum_{j \in [n_\mu]} k_X(x_j, \tilde{x}_i)$, and (2) $\|\hat{w}_\mu\|^2_\hhx = \frac{1}{n_\mu^2} \sum_{i,j \in [n_\mu]} k_X(x_i, x_j)$. 
Moreover the costs $C, E$ in \cref{eq:cost} are $C = O(n_\mu^2+n_\nu^2), E = O(n_\mu + n_\nu)$.
\\

\noindent The effects of the estimators above are studied in \cref{thm:complexities} and Corollary~\ref{cor:costs}, reported in \cref{sec:intro} and proven in \cref{sect:kme}, \cpageref{sect:kme}, using standard tools from approximation theory and machine learning with kernel methods \citep{wendland2004scattered,caponnetto2007optimal,muandet2016kernel}.

\paragraph{Acknowledgements.}
This work was funded in part by the French government under management
of Agence Nationale de la Recherche as part of the “Investissements d’avenir” program, reference
ANR-19-P3IA-0001 (PRAIRIE 3IA Institute). We also acknowledge support from the European
Research Council (grants SEQUOIA 724063 and REAL 947908), and R{\'e}gion Ile-de-France.

\bibliography{biblio}

\appendix

\section{Proofs of Corollary~\ref{cor:Astar_exists} and Corollary~\ref{cor:asm1-Astar}}
\label{sect:proof-Astar}

\begin{proof}{\bf of Corollary~\ref{cor:Astar_exists}.}
Define $h(x, y) := c(x,y) - u_\star(x) - v_\star(y),  \forall (x, y) \in X \times Y$. By \Cref{thm:SoS-OT}, there exist $w_1, ..., w_d \in H^s(X \times Y)$ such that $h = \sum_{i=1}^d w_i^2$.
Since $H^s(X\times Y) \subseteq \hhxy$, then $w_1,\dots,w_d \in \hhxy$. Hence $A_\star = \sum_{i=1}^d w_i \otimes w_i \in \pdm{\hhxy}$. Moreover, by the reproducing property of $\hhxy$, $A_\star$ satisfies $\dotp{\phi(x, y)}{A_\star\phi(x, y)}_\hhxy = \sum_{i = 1}^d \scal{w_j}{\phi(x,y)}_\hhxy^2 = \sum_{i = 1}^d  w(x,y)^2 = h(x,y)$, for all $(x,y) \in X \times Y$. Finally, note that $\operatorname{rank} A_\star \leq d$ by construction.
\end{proof}

\begin{proof}{\bf of Corollary~\ref{cor:asm1-Astar}.}
Theorem 3.3 in \cite{philippis2013mongeampre} implies that Kantorovich potentials satisfy $u_\star \in C^{m+2,1}(\overline{X}), v_\star \in C^{m+2,1}(\overline{Y})$, where $C^{m+2,1}(\overline{Z})$ for an open set $Z$ is the space of real functions over $Z$ that are $m+2$-times differentiable, with all the derivatives of order $m+2$ that are Lipschitz continuous \citep[][1.29, Page~10]{adams2003sobolev}. Since $X$ is convex and bounded, then Lipschitz continuity of all the derivatives of order $m+2$ implies that all the weak derivatives up to order $m+3$ are in $L^\infty(X)$. Since $L^\infty(X) \subset L^2(X)$, by the boundedness of $X$, we have $C^{m+2,1}(\overline{X}) \subset H^{m+3}_2(X)$. Analogously $C^{m+2,1}(\overline{Y}) \subset H^{m+3}_2(Y)$. Then $u_\star \in H^{m+3}(X), v_\star \in H^{m+3}(Y)$, and we can apply \cref{thm:SoS-OT} and Corollary~\ref{cor:Astar_exists} with $s = m+1$, when $m > d$, which guarantee the existence of $A_\star \in \pdm{\hhxy}$, since $\hhxy = H^s(X \times Y)$.
\end{proof}

\section{Proofs of \cref{thm:eps-exists} and \cref{thm:sample_bound}}\label{sect:proof-subs}

\begin{proof}{\bf of \cref{thm:eps-exists}.}
Let $f(x,y) = c(x,y) - u(x){\bf 1}_X(y) - v(y){\bf 1}_Y(x) - \scal{\phi(x,y)}{A \phi(x,y)}$ for any $x, y \in X \times Y$, where the function ${\bf 1}_X(x)$ and ${\bf 1}_Y(y)$ are respectively the constant function $1$ over $X$ and over $Y$. 
By construction $f \in H^s(X \times Y)$. Since $X, Y$ are open bounded convex sets, then $X \times Y$ has the same property which, in turn, implies the so-called {\em uniform interior cone condition}  (see Lemma~\ref{lm:uic-convex-set}, \cpageref{lm:uic-convex-set} in the appendix). Then we can apply Theorem 1 of \cite{narcowich2005sobolev}  for which there exist $h_0, C_0$ depending only on $s, d, X, Y$ such that for any $h_\ell \leq h_0$ the following holds
$$\sup_{(x,y) \in X \times Y} |f(x,y)| \leq \eps, \quad \eps := C_1 h_\ell^{s-d} |f|_{H^s(X\times Y)},$$
where $|g|_{H^s(X\times Y)} := \sum_{|\alpha| = s} \|D^\alpha g\|_{L^2(X\times Y)} \leq \|g\|_{H^s(X\times Y)}$ for any $g \in H^s(X \times Y)$. Let $r_A(x,y) = \scal{\phi(\tilde{x}_j,\tilde{y}_j)}{A \phi(\tilde{x}_j, \tilde{y}_j)}_\hhxy$, we have 
$$|f|_{H^s(X\times Y)} \leq |c|_{H^s(X\times Y)} + |u {\bf 1}_X|_{H^s(X\times Y)} + |v {\bf 1}_Y|_{H^s(X\times Y)} +|r_A|_{H^s(X\times Y)}.$$
Now $|c|_{H^s(X\times Y)} = 0$ since $c$ is the quadratic cost and $s \geq 3$ and $|u {\bf 1}_X|_{H^s(X\times Y)} = |u|_{H^s(X)} \leq  \|u\|_{H^s(X)}$, $|v {\bf 1}_Y|_{H^s(X\times Y)} = |v|_{H^s(Y)} \leq  \|v\|_{H^s(Y)}$.
We recall now that for any two Banach spaces satisfying $B \subseteq A$, we have $\|w\|_A \leq \|w\|_B$ for any $w \in B$. Then $\|u\|_{H^s(X)} \leq \|u\|_{\hhx}$, $\|v\|_{H^s(Y)} \leq \|v\|_{\hhy}$. Finally $|r_A|_{H^s(X\times Y)} \leq \|r_A\|_{H^s(X \times Y)} \leq C_1 \tr(A)$ via Lemma~9 page 41 from \cite{rudi2020global} and Proposition 1 of the same paper, where $C_2$ depends only on $s,X,Y$. 
Now since $|f(x,y)| \leq \eps$ for all $(x,y) \in X \times Y$, and since $r_A(x,y) \geq 0$ for any $x,y \in X \times Y$ since $A$ is a positive operator, we have
$$-\eps \leq f(x,y) \leq f(x,y) + r_A(x,y) = c(x,y) - u(x) - v(y), \quad \forall (x,y) \in X \times Y.$$
The final result is obtained by defining $C_0 = C_1(1+C_2)$.
\end{proof}

\begin{proof}{\bf of \cref{thm:sample_bound}.}
Denote by $V(u,v)$ the functional of Problem~\ref{EqDualOT} and by $\hat{V}_{\la_1,\la_2}(u,v,A)$ the functional of Problem~\ref{eq:w_hat}. Then $\OT = V(u_\star, v_\star)$. Denote by $\Delta(u,v)$ the quantity 
$$\Delta(u,v) = |\scal{u}{\hat{w}_\mu - w_{\mu}}_\hhx + \scal{v}{\hat{w}_\nu - w_{\nu}}_\hhy|.$$
Denote by $\hat{R}^2(u,v)$ the quantity $\hat{R}^2(u,v) =  \|u\|^2_\hhx + \|v\|^2_\hhy$ for any $u \in \hhx, v \in \hhy$.

\paragraph{Step 0. Admissibility of $u_\star, v_\star, A_\star$ and existence of a maximizer.}
Note that $(u_\star, v_\star, A_\star)$ is an admissible point for Problem~\ref{eq:w_hat}, since the triple satisfies $c_\star(x,y) - u_\star(x) - v_\star(y) = \scal{\phi(x,y)}{A_\star \phi(x,y)} ~\forall (x,y) \in X \times Y$, and Problem~\ref{eq:w_hat} applies the same constraints but on a subset of $X \times Y$. 
Moreover $\la_1, \la_2 > 0$, this is enough to guarantee the existence of a maximizer for Problem~\ref{eq:w_hat}. Indeed, as we recall in Lemma~\ref{lm:prob_w_hat_has_minimizer}, \cpageref{lm:prob_w_hat_has_minimizer}, in the appendix, a form of {\em representer theorem} holds for Problem~\ref{eq:w_hat} (see Lemma~\ref{lm:representer}, \cpageref{lm:representer} in the appendix), moreover the functional is coercive on the finite-dimensional space induced by the data, it has an upper bound and the problem has an admissible point. 
Now denote by $(\hat{u}, \hat{v}, \hat{A})$ a maximizer of Problem~\ref{eq:w_hat} and define $\bOT := \hat{V}_{\la_1,\la_2}(\hat{u},\hat{v},\hat{A})$. By construction $\bOT$ is the maximum of $\hat{V}_{\la_1,\la_2}$.
Then by definition of $\hOT$ in \cref{eq:hat_OT},
$$\hOT = \bOT + \la_1\tr(\hat{A}) + \la_2 R^2(\hat{u},\hat{v}).$$ 

\paragraph{Step 1. Subsampling the inequality.}
The assumption on $h_\ell$ and the fact that the constraints of \cref{eq:w_hat} satisfy \cref{eq:constr-ineq} allow to apply \cref{thm:eps-exists}, from which there exists a couple $(\hat{u}_\eps, \hat{v}_\eps)$ that is admissible for Problem~\ref{EqDualOT}, where $\hat{u}_\eps = \hat{u} - \frac{\eps}{2}$ and $\hat{v}_\eps = \hat{v} - \frac{\eps}{2}$, for any $\eps \geq C_0 h^{s-d}_\ell (\|\hat{u}\|_\hhx + \|\hat{v}\|_\hhy +  \tr(\hat{A}))$. In particular, since $\|u\|_\hhx + \|v\|_\hhy \leq 2 R(\hat{u}, \hat{v})$, we choose $\eps = \eta (R(\hat{u}, \hat{v}) + \tr(\hat{A}))$, with $\eta = C_0 h^{s-d}_\ell$.

\paragraph{Step 2. Bounding $\hOT - \OT$.} Since $\OT$ is the maximum of Problem~\ref{EqDualOT} then
\begin{align}\label{eq:upperbound-tildeW-W}
     \OT = V(u_\star, v_\star) & \geq V(\hat{u}_\eps, \hat{v}_\eps) = V(\hat{u}, \hat{v}) - \eps \geq \hOT - \Delta(\hat{u}, \hat{v}) - \eps.
\end{align}
Analogously, since $(\hat{u}, \hat{v}, \hat{A})$ maximize Problem~\ref{eq:w_hat}, then
\begin{align}
\label{eq:bound-hatW-to-Vastar} \bOT  = \hat{V}_{\la_1, \la_2}(\hat{u}, \hat{v}, \hat{A}) & \geq \hat{V}_{\la_1,\la_2}(u_\star, v_\star, A_\star) \\
& = V(u_\star, v_\star) - \left[V(u_\star, v_\star) - \hat{V}_{\la_1,\la_2}(u_\star, v_\star, A_\star)\right]\\
& \geq \OT - \Delta(u_\star, v_\star) - \la_1\tr(A_\star) + \la_2 R^2(u_\star,v_\star).
\end{align}
By expressing $\bOT$ in terms of $\hOT$ in the inequality above and combining it with \cref{eq:upperbound-tildeW-W}, we have
\begin{equation}\label{eq:bound-Wtilde-W}
    \la_1\tr(\hat{A} - A_\star) +  \la_2 (R^2(\hat{u}, \hat{v}) - R^2(u_\star,v_\star)) - \Delta(u_\star, v_\star) \leq \hOT - \OT \leq \Delta(\hat{u}, \hat{v}) + \eps.
\end{equation}

\paragraph{Step 3. Bound on $\Delta$.}
Note that for two RKHS $\hh, \kk$ and any $u,v \in \hh, w,z \in \kk$, the following identity holds: $\scal{u}{v}_\hh + \scal{w}{z}_\kk = \scal{(u,w)}{(v,z)}_{\hh \oplus \kk}$, where $\hh \oplus \kk$ is a RKHS \citep{aronszajn1950theory}. This implies $(\scal{u}{v}_\hh + \scal{w}{z}_\kk)^2 = \|(u,w)\|_{\hh \oplus \kk}^2\|(v,z)\|_{\hh \oplus \kk}^2 = (\|u\|^2_\hh + \|v\|^2_\kk)(\|w\|^2_\hh + \|z\|^2_\kk)$. Applying this inequality to $\Delta$ leads to the following result:
\begin{align}\label{eq:bound-Delta}
\Delta(a,b) \leq R(a,b) \gamma.
\end{align}

\paragraph{Step 4. Bounding $\tr(\hat{A}), R(\hat{u}, \hat{v})$.}

The bound \cref{eq:bound-Wtilde-W} implies
\begin{align}
\la_1\tr(\hat{A}) +  \la_2 R^2(\hat{u}, \hat{v}) \leq \la_1\tr(A_\star) +  \la_2 R^2(u_\star,v_\star) + \Delta(u_\star, v_\star) + \Delta(\hat{u}, \hat{v}) + \eps.
\end{align}
By bounding $\Delta$ via \cref{eq:bound-Delta}, expanding the definition of $\eps$ and reordering the terms in the inequality above, we obtain
\begin{align}
\alpha \tr(\hat{A}) +  \la_2 R^2(\hat{u}, \hat{v}) -  \beta R(\hat{u}, \hat{v}) \leq \la_1\tr(A_\star) + \la_2 R^2(u_\star,v_\star) + \gamma R(u_\star, v_\star),
\end{align} 
with $\alpha = \la_1 - \eta$ and $\beta = \gamma + \eta$.  By completing the square in $R^2(\hat{u}, \hat{v})$, the inequality above is rewritten as
\begin{align}
\alpha \tr(\hat{A}) +  \la_2 (R(\hat{u}, \hat{v}) -  \tfrac{\beta}{2\la_2})^2 \leq \la_1\tr(A_\star) + \la_2 R^2(u_\star,v_\star) + \gamma R(u_\star, v_\star) + \tfrac{\beta^2}{4\la_2}.
\end{align} 
Since $\alpha \geq \la_1/2$, by the assumption $\la_1 \geq 2 \eta$, the inequality above implies
\begin{align}\label{eq:bound-trA-R}
\tfrac{\la_1}{2}\tr(\hat{A}) \leq \la_2 S,  \quad R(\hat{u}, \hat{v}) \leq \tfrac{\beta}{2\la_2} + \sqrt{S}.
\end{align} 
with $S  := \tfrac{\la_1}{\la_2}\tr(A_\star) + R^2(u_\star,v_\star) + \tfrac{\gamma }{\la_2} R(u_\star, v_\star) + \tfrac{\beta^2}{4\la_2^2}.$
\paragraph{Conclusion.} From the lower bound in \cref{eq:bound-Wtilde-W} and the bound \cref{eq:bound-Delta} on $\Delta$, we have that
$$\hOT - \OT  \geq -\la_1\tr(A_\star) - \la_2 R^2(u_\star,v_\star) - \gamma  R(u_\star, v_\star) \geq - \la_2 S.$$
From the upper bound in \cref{eq:bound-Wtilde-W}, the bound for $\Delta$ in \cref{eq:bound-Delta}, the bound for $\tr(\hat{A}), R(\hat{u},\hat{v})$ in \cref{eq:bound-trA-R}, the definition of $\eps$, and the fact that $\la_1 \geq 2 \eta$, we have 
\begin{align}
\hOT - \OT  &\leq \beta R(\hat{u}, \hat{v}) + \tfrac{\la_1}{2} \tr(\hat{A}) \leq \tfrac{\beta^2}{2\la_2} + \beta\sqrt{S} + \la_2 S.
\end{align}
Then 
$|\hOT - \OT| \leq \tfrac{\beta^2}{2\la_2} + \beta\sqrt{S} + \la_2 S.$
To conclude, by noting that $S \leq \tfrac{\la_1}{\la_2}\tr(A_\star) + (R(u_\star,v_\star) + \tfrac{\beta}{2\la_2^2})^2$, since $\gamma \leq \beta$, and that $(a + b + c)^2 \leq 3 a^2 + 3b^2 + 3c^2$ for any $a,b,c \geq 0$, we have
\begin{align}
    \tfrac{\beta^2}{2\la_2} + \beta\sqrt{S} + \la_2 S &\leq 2 \la_2 (\tfrac{\beta}{2\la} + \sqrt{S})^2 \leq 2 \la_2 \left(R(u_\star,v_\star) + \tfrac{\beta}{\la_2} + \sqrt{\tfrac{\la_1}{\la_2}\tr(A_\star)}\right)^2 \\
    & \leq 6 \la_2 (R^2(u_\star,v_\star) + \tfrac{\beta^2}{\la_2^2} + \tfrac{\la_1}{\la_2}\tr(A_\star)).
\end{align} 
\end{proof}

\section{Additional results on convex sets and random points}\label{sect:add_convex}

We first recall that the following property about bounded sets in Euclidean spaces.

\begin{lemma}[\cite{krieg2020random}]\label{lm:uic-convex-set}
Let $Z \subset \R^q$, $q \in \N$ be a non-empty open set. The following holds:
\begin{enumerate}
    \item If $Z$ is a {\em bounded Lipschitz domain} \citep{grisvard2011elliptic}, then it satisfies the {\em uniform interior cone condition} \citep{wendland2004scattered}.
    \item If $Z$ is a convex bounded set, then it is a bounded Lipschitz domain.
\end{enumerate}
\end{lemma}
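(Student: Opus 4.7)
My plan is to address the two statements separately. Both are classical facts from geometric analysis: part 1 unfolds the definition of a Lipschitz domain, while part 2 is a convex-analytic construction via the Minkowski gauge.

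For Part 1, I would unpack the definition of a bounded Lipschitz domain: there is a finite open cover $\{U_1,\dots,U_N\}$ of $\partial Z$ together with rigid motions $\Phi_j$ so that each $\Phi_j(Z \cap U_j)$ is the epigraph of a Lipschitz function $f_j : \R^{q-1} \to \R$ with constant $L_j$. The key geometric observation is that the open cone with vertical axis, aperture $\arctan(1/L_j)$, and suitably small height $h_j$ fits inside this epigraph at every base point, because inside such a cone the vertical rise exceeds the drop $f_j$ can achieve. Taking $\alpha = \min_j \arctan(1/L_j)$ and $h = \min_j h_j$ (strictly positive because $N < \infty$) yields a single cone whose appropriately rotated copy lies inside $Z$ from every boundary point; interior points are handled trivially with any ball around them. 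Finiteness of the cover, which comes from compactness of $\partial Z$ (hence boundedness of $Z$), is what makes the minima strictly positive.

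For Part 2, my plan is to build Lipschitz local graphs out of the Minkowski gauge of $Z$. Since $Z$ is non-empty, open and bounded, after translation I may assume $B(0,r) \subset Z \subset B(0,R)$ for some $0 < r \leq R$. Define $p_Z(x) = \inf\{t > 0 : x/t \in Z\}$. Convexity of $Z$ together with $\|x\|/R \leq p_Z(x) \leq \|x\|/r$ force $p_Z$ to be positively homogeneous, convex, and locally Lipschitz on $\R^q$, with $\partial Z = \{p_Z = 1\}$. At any boundary point $x_0$ I would pick a supporting hyperplane $H$ at $x_0$ (Hahn--Banach, since $Z$ is convex open and $x_0 \notin Z$); the radial parameterisation $\omega \mapsto \omega/p_Z(\omega)$ restricted to a neighbourhood of $x_0$ in $H$ expresses $\partial Z$ locally as a graph over $H$, whose Lipschitz constant can be controlled by the ratio $R/r$. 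A finite cover of $\partial Z$ by such charts (compactness) then establishes the bounded Lipschitz property.

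The step I expect to be the main obstacle is obtaining \emph{uniform} Lipschitz constants across the charts in Part 2. This reduces to the uniform lower bound $p_Z(\omega) \geq 1/R$ on the unit sphere, which prevents any supporting hyperplane from being tangent to $\partial Z$ with arbitrarily steep slope. This uniform positivity is exactly what non-emptiness, openness, and boundedness of $Z$ jointly deliver, via a positive inradius and a finite outradius; once this is in hand, the two-sided bound on $p_Z$ propagates to a uniform Lipschitz bound on every local graph and the lemma follows.
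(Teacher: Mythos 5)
Your outline is essentially correct, but it takes a genuinely different route from the paper: the paper does not prove this lemma from scratch at all, it delegates both points to the literature (Lemma 5 of \cite{krieg2020random} or Theorem 1.2.2.2 of \cite{grisvard2011elliptic} for the implication Lipschitz $\Rightarrow$ uniform interior cone; Lemma 4 of \cite{krieg2020random} or Lemma 7 of \cite{dekel2004whitney} for convex bounded $\Rightarrow$ Lipschitz; and it additionally notes that Proposition 11.26 of \cite{wendland2004scattered} gives the cone condition for convex bounded sets \emph{directly}, since such sets are star-shaped with respect to a ball, bypassing the Lipschitz-graph machinery altogether). Your proposal instead reconstructs the classical arguments: for point 1 the standard epigraph-cone construction with aperture $\arctan(1/L_j)$ and a compactness argument to get uniform aperture and height, and for point 2 the Minkowski-gauge/inner-ball argument with Lipschitz constant controlled by $R/r$. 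This buys self-containedness, at the cost of details the citations already package: in point 1, the phrase ``interior points are handled trivially with any ball'' only works for points at distance bounded below from $\partial Z$; points of $Z$ close to the boundary must inherit the upward cone from the epigraph structure (which your ``every base point'' remark implicitly covers, but should be stated for interior chart points, not just boundary points). In point 2, the radial parameterisation $\omega \mapsto \omega/p_Z(\omega)$ is not itself a graph over a hyperplane; one must use the uniform transversality coming from the inner ball (every supporting hyperplane at a boundary point stays at distance at least $r$ from the centre, so the radial direction makes an angle with the normal whose cosine is at least $r/R$) to convert it into a Lipschitz graph over $H$, and the Lipschitz bound uses both sides of the estimate $\|x\|/R \leq p_Z(x) \leq \|x\|/r$, not only the lower bound you single out. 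These are standard repairs rather than gaps, so your route is viable; the paper's citation-based route (especially the star-shaped shortcut of Wendland) is simply shorter and comes with explicit constants.
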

\begin{proof}
For the first point, see Lemma 5 of \cite{krieg2020random} or Theorem 1.2.2.2 of \cite{grisvard2011elliptic}. For the second point, see Lemma 4 of \cite{krieg2020random} or Lemma 7 in \cite{dekel2004whitney}. Also, the fact that a convex bounded open set has the uniform interior cone condition is implied by Proposition 11.26 of \cite{wendland2004scattered}, that proves it for the more general class of sets called {\em star shaped sets w.r.t.\ a ball}.
\end{proof}

\begin{lemma}[Fill distance of i.i.d points on a u.i.c. set \citep{reznikov2016covering}]\label{lm:uic-sampling}
Let $Z \subset \R^q$, $q \in \N$ be a non-empty open set satisfying the uniform interior cone condition (see \cref{lm:uic-convex-set}). Let $\tilde{Z}_\ell$ be a collection of $\ell$ points sampled independently and uniformly at random from a probability $\rho$ that admits density (denote it by $p$) and such that $p(z) \geq c_0 > 0$ for any $z \in Z$. Let $\delta \in (0,1]$. Then there exist $\ell_0, C_1, C_2$ depending on $c_0, Z, \rho, q, r_0$ such that for $\ell \geq \ell_0$, the following holds with probability at least $1 - \delta$:
$$h_\ell \leq (C_1 \ell^{-1} \log(C_2 \ell/\delta))^{1/q}.$$
\end{lemma}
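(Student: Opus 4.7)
I would prove this by a standard covering-plus-union-bound argument. The starting observation is that $h_\ell \leq r$ holds if and only if the balls $B(\tilde z_j, r)$ cover $Z$. So first I would pick a maximal $(r/2)$-packing of $Z$, which automatically forms an $(r/2)$-cover: a collection of centers $z_1,\dots,z_N \in Z$ with $N \leq C_{\mathrm{cov}}\, r^{-q}$ (for a constant depending only on $\mathrm{diam}(Z)$ and $q$) such that $Z \subseteq \bigcup_k B(z_k, r/2)$. If every set $B(z_k, r/2) \cap Z$ contains at least one sample $\tilde z_j$, then by the triangle inequality $h_\ell \leq r$, so it is enough to control the probability of the complementary event.

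For each center $z_k \in Z$ I would next lower-bound $\rho(B(z_k, r/2)\cap Z)$. By \cref{lm:uic-convex-set} the set $Z$ satisfies the uniform interior cone condition, so for every $z_k \in Z$ there is a cone with apex $z_k$, some fixed aperture $\theta$, and height $r_0$ contained in $\overline{Z}$. For $r \leq r_0$, the intersection $B(z_k, r/2) \cap Z$ therefore contains a spherical sector of volume at least $\omega (r/2)^q$ for some $\omega = \omega(\theta, q) > 0$. Combining with the density lower bound $p \geq c_0$ yields $\rho(B(z_k, r/2)\cap Z) \geq p_r := c_0\, \omega\, 2^{-q} r^q$. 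The probability that a given ball is missed by all $\ell$ samples is at most $(1-p_r)^\ell \leq e^{-p_r \ell}$, and a union bound over the $N$ centers gives
\[ \mathbb{P}(h_\ell > r) \;\leq\; C_{\mathrm{cov}}\, r^{-q}\, \exp(-p_r\, \ell). \]

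To conclude I would set the right-hand side equal to $\delta$ and solve for $r^q$. Substituting the ansatz $r^q = (C_1/\ell)\log(C_2 \ell/\delta)$ turns the exponent into $c_0\, \omega\, 2^{-q} C_1 \log(C_2\ell/\delta)$; picking $C_1$ large enough (depending on $c_0, \omega, q$) makes this dominate $q \log(1/r) + \log(C_{\mathrm{cov}}/\delta)$ whenever $\ell \geq \ell_0$, where $\ell_0$ is chosen so that the resulting $r$ satisfies $r \leq r_0$ and $\log(C_2\ell/\delta) \geq 1$. The main technical nuisance I expect is the mildly self-referential appearance of $r$ on both sides (the covering prefactor $r^{-q}$ on the left), handled by absorbing the subleading $\log(1/r) \sim \log\log(\ell/\delta)$ term into $C_1, C_2$ for $\ell$ beyond the threshold $\ell_0$. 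The final constants $\ell_0, C_1, C_2$ depend only on $c_0, q, Z, \rho, r_0$, as stated.
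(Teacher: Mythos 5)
Your proposal is correct, and its first half is exactly the paper's key step: using the uniform interior cone condition together with the density lower bound $p \geq c_0$ to get $\rho(B(z,r)\cap Z) \gtrsim c_0 r^q$ for $r \leq r_0$. Where you diverge is in what you do with that bound: the paper simply verifies this as the hypothesis $\Phi(r) = \tfrac{c_0 c_1}{\mathrm{vol}(Z)} r^q$ of Theorem 2.1 of \citet{reznikov2016covering} and invokes that result to conclude, whereas you re-prove the probabilistic covering statement from scratch via a maximal $(r/2)$-packing, the bound $(1-p_r)^\ell \leq e^{-p_r \ell}$ on the probability that a given cell is missed, and a union bound over the $O(r^{-q})$ cells, then solve for $r$ with the ansatz $r^q = (C_1/\ell)\log(C_2\ell/\delta)$ and absorb the self-referential $r^{-q}$ prefactor into the constants for $\ell \geq \ell_0$. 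Your route is more elementary and self-contained (it buys independence from the cited covering-radius theorem, at the cost of slightly looser constants and a longer argument); the paper's route is shorter and delegates precisely the part you carry out by hand. One small point to make explicit: your packing bound $N \leq C_{\mathrm{cov}} r^{-q}$ and the finiteness of $h_\ell$ require $Z$ to be bounded, which is not stated in the lemma but is implicit both in its use in the paper (products of bounded convex sets) and in the paper's own proof through $\mathrm{vol}(Z) < \infty$; stating that assumption would close the only gap in your write-up.
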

\begin{proof}
The uniform interior cone condition guarantees that there exists a cone $C$ such that for any $z \in Z$ there exists a spherical cone of radius $r$ such that $C_z \subseteq Z$ that is congruent to $C$ and with vertex in $z$. Then, for any $r \leq r_0$ there exists $c_1$ such that
$$\rho(B(z,r) \cap Z) \geq \rho(B(z,r) \cap C_z) \geq c_0 \frac{\textrm{vol}(B(z,r) \cap C_z)}{\textrm{vol}(Z)} \geq \frac{c_0 c_1}{\textrm{vol}(Z)} r^q.$$
Then we can apply Theorem 2.1 of \cite{reznikov2016covering} with $\Phi(r) = \frac{c_0 c_1}{\textrm{vol}(Z)} r^q$ obtaining that there exists $\ell_0, C_1, C_2$ depending on $c_0, c_1, q, r_0, Z$ such that with probability at least $1 - \delta$,
$$h_\ell \leq (C_1 \ell^{-1} \log(C_2 \ell /\delta))^{1/q}.$$
\end{proof}

\section{Representer theorem and coercivity for Problem~\eqref{eq:w_hat}}\label{sec:rep_thm}

We first adapt the proof of the representer theorem from \cite{marteau2020non}. We use it to prove coercivity of the functional.
Given the RKHS $\hhx, \hhy, \hhxy$ and with the same notation of Problem~\eqref{eq:w_hat} define  $\hat{\hh}_X = \textrm{span}\{\hat{w}_\mu, \phi_X(\tilde{x}_1),\dots, \phi_X(\tilde{x}_\ell)\}$, $\hat{\hh}_Y = \textrm{span}\{\hat{w}_\nu, \phi_Y(\tilde{y}_1),\dots, \phi_Y(\tilde{y}_\ell)\}$ and $\hat{\hh}_{XY} = \textrm{span}\{\phi(x_1,y_1),\dots,\phi(x_\ell,y_\ell)\}$.

\begin{lemma}[Representer theorem, \cite{marteau2020non}]\label{lm:representer}
Let $\la_1, \la_2 > 0$. Denote by $V_{\la_1, \la_2}(u, v, A)$ the objective function of Problem~\eqref{eq:w_hat}.
Let $u_1 \in \hat{\hh}_X , u_2 \in \hat{\hh}_X^\bot$, $v_1 \in \hat{\hh}_Y, v_2 \in \hat{\hh}_Y^\bot$ and $A_1 \in \hat{\hh}_{XY} \otimes \hat{\hh}_{XY},  A_2 \in \hat{\hh}_{XY}^\bot \otimes \hat{\hh}_{XY}, A_3 \in \hat{\hh}_{XY}^\bot \otimes \hat{\hh}_{XY}^\bot$. Assume that $u_2$ or $v_2$ or $A_2$ or $A_3$ are different from $0$. Set $u = u_1 + u_2$, $v = v_1 + v_2$ and $A = A_1 + A_2 + A_2^* + A_3$ and assume that $u, v, A$ is an admissible point for Problem~\eqref{eq:w_hat} Then
\begin{enumerate}
    \item $(u_1, v_1, A_1)$ is an admissible point for Problem~\eqref{eq:w_hat},
    \item $V_{\la_1, \la_2}(u_1, v_1, A_1) > V_{\la_1, \la_2}(u, v, A)$.
\end{enumerate}
\end{lemma}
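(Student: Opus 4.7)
My strategy is to exploit two facts: (a) the objective's linear part and all the constraints only ``see'' the components of $u,v,A$ inside the data subspaces $\hat{\hh}_X,\hat{\hh}_Y,\hat{\hh}_{XY}$, by the reproducing property; (b) the penalization terms $\la_2(\|u\|^2+\|v\|^2)+\la_1\tr(A)$ strictly decrease under projection onto these subspaces whenever a non-trivial orthogonal component is present. Step (a) will give admissibility of $(u_1,v_1,A_1)$; step (b) will give the strict inequality. The main obstacle is the trace bookkeeping for the block decomposition of $A$ together with handling the case where only $A_2\neq 0$, for which I will need a Schur-complement-type argument.

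\textbf{Step 1: invariance of constraints and linear terms.} For each $j\in[\ell]$, since $\phi_X(\tilde x_j)\in\hat{\hh}_X$ and $u_2\in\hat{\hh}_X^\bot$, the reproducing property gives $u(\tilde x_j)=\scal{u}{\phi_X(\tilde x_j)}_{\hhx}=\scal{u_1}{\phi_X(\tilde x_j)}_{\hhx}=u_1(\tilde x_j)$, and similarly $v(\tilde y_j)=v_1(\tilde y_j)$ and $\scal{u}{\hat w_\mu}_{\hhx}=\scal{u_1}{\hat w_\mu}_{\hhx}$, $\scal{v}{\hat w_\nu}_{\hhy}=\scal{v_1}{\hat w_\nu}_{\hhy}$. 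For the quadratic constraint term, $\phi(\tilde x_j,\tilde y_j)\in\hat{\hh}_{XY}$, and by construction $A_2,A_2^*,A_3$ vanish when sandwiched between two vectors of $\hat{\hh}_{XY}$ (the off-diagonal blocks map into $\hat{\hh}_{XY}^\bot$ and $A_3$ is supported on $\hat{\hh}_{XY}^\bot$), hence $\scal{\phi(\tilde x_j,\tilde y_j)}{A\phi(\tilde x_j,\tilde y_j)}_{\hhxy}=\scal{\phi(\tilde x_j,\tilde y_j)}{A_1\phi(\tilde x_j,\tilde y_j)}_{\hhxy}$. Finally, $A\succeq 0$ implies $A_1\succeq 0$ on $\hat{\hh}_{XY}$, since for any $w\in\hat{\hh}_{XY}$, $\scal{w}{A_1 w}=\scal{w}{Aw}\geq 0$. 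Thus $(u_1,v_1,A_1)$ satisfies all the constraints of \eqref{eq:w_hat}, proving item~1.

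\textbf{Step 2: strict decrease of the penalty.} The difference of the objectives reduces (by Step 1) to
\[
V_{\la_1,\la_2}(u_1,v_1,A_1)-V_{\la_1,\la_2}(u,v,A)=\la_2\bigl(\|u\|_{\hhx}^2-\|u_1\|_{\hhx}^2\bigr)+\la_2\bigl(\|v\|_{\hhy}^2-\|v_1\|_{\hhy}^2\bigr)+\la_1\bigl(\tr(A)-\tr(A_1)\bigr).
\]
By orthogonality, $\|u\|^2_{\hhx}=\|u_1\|^2_{\hhx}+\|u_2\|^2_{\hhx}$, so this contribution is nonnegative and strictly positive iff $u_2\neq 0$; the analogous statement holds for $v$. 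For the trace, picking an orthonormal basis adapted to $\hat{\hh}_{XY}\oplus\hat{\hh}_{XY}^\bot$, the off-diagonal blocks $A_2,A_2^*$ contribute zero, so $\tr(A)=\tr(A_1)+\tr(A_3)$. Since $A_3$ is a diagonal block of the positive operator $A$, $A_3\succeq 0$, hence $\tr(A_3)\geq 0$.

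\textbf{Step 3: handling the $A_2\neq 0$ case.} The only delicate case is when only $A_2\neq 0$ while $A_3=0$. I claim this cannot occur: if $A\succeq 0$ and $A_3=0$, then for any $w\in\hat{\hh}_{XY}$, $w'\in\hat{\hh}_{XY}^\bot$ and $t\in\R$, expanding $\scal{w+tw'}{A(w+tw')}_{\hhxy}\geq 0$ yields $\scal{w}{A_1 w}+2t\,\mathrm{Re}\scal{w'}{A_2 w}\geq 0$, and letting $t\to\pm\infty$ forces $\mathrm{Re}\scal{w'}{A_2 w}=0$; replacing $w'$ by $iw'$ gives the imaginary part as well, so $A_2=0$. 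Consequently, the hypothesis ``$u_2$ or $v_2$ or $A_2$ or $A_3$ is nonzero'' collapses to ``$u_2\neq 0$, or $v_2\neq 0$, or $A_3\neq 0$'', and in the last case $A_3\succeq 0$ combined with $A_3\neq 0$ gives $\tr(A_3)>0$. In every case at least one of the three nonnegative contributions above is strictly positive, which yields $V_{\la_1,\la_2}(u_1,v_1,A_1)>V_{\la_1,\la_2}(u,v,A)$ and establishes item~2.
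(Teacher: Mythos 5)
Your proof is correct and follows essentially the same route as the paper's: the reproducing property makes the constraints and the linear part of the objective blind to the orthogonal components, and the strict decrease of the quadratic and trace penalties gives the strict inequality; the only variation is that you treat the off-diagonal block by the contrapositive ($A \succeq 0$ and $A_3 = 0$ imply $A_2 = 0$) via an elementary quadratic-in-$t$ argument, whereas the paper invokes the Schur complement to deduce $A_3 \neq 0$ from $A_2 \neq 0$ — equivalent conclusions, with your version avoiding the implicit invertibility of $A_1$. One cosmetic point: the RKHSs here are real, so the step of replacing $w'$ by $iw'$ is vacuous; the argument with real $t$ alone already forces the cross term $\langle w', A_2 w\rangle$ to vanish.
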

\begin{proof}
We can decompose any $u \in \hhx, v \in \hhy, A \in \pdm{\hhxy}$ as $u_1 \in \hat{\hh}_X , u_2 \in \hat{\hh}_X^\bot$, $v_1 \in \hat{\hh}_Y, v_2 \in \hat{\hh}_Y^\bot$ and $A_1 \in \hat{\hh}_{XY} \otimes \hat{\hh}_{XY},  A_2 \in \hat{\hh}_{XY}^\bot \otimes \hat{\hh}_{XY}, A_3 \in \hat{\hh}_{XY}^\bot \otimes \hat{\hh}_{XY}^\bot$.
Note that the components $u_2, v_2, A_2, A_3$ do not impact the constraints or the functional but are only penalized by the regularizer, indeed $\scal{u_2}{\phi_X(\tilde{x}_i)}_\hhx = 0$ since $\phi_X(\tilde{x}_i) \in \hat{\hh}_X$, while $u_2 \in \hat{\hh}_X^\bot$, then
$$u(\tilde{x_i}) = \scal{u}{\phi_X(\tilde{x}_i)}_\hhx = \scal{u_1}{\phi_X(\tilde{x}_i)}_\hhx + \scal{u_2}{\phi_X(\tilde{x}_i)}_\hhx = \scal{u_1}{\phi_X(\tilde{x}_i)}_\hhx = u_1(\tilde{x}_i).$$
The same reasoning holds for $v(\tilde{x}_i)$ and for $\scal{u}{w_\mu}_\hhx, \scal{v}{w_\nu}_\hhy$. For $A$ analogously we have that $t = A_2\phi(\tilde{x}_i,\tilde{y}_i) \in \hat{\hh}_{XY}^\bot$ so
$\scal{\phi(\tilde{x}_i,\tilde{y}_i)}{A_2\phi(\tilde{x}_i,\tilde{y}_i)}_\hhxy = \scal{\phi(\tilde{x}_i,\tilde{y}_i)}{t}_\hhxy = 0$ and similarly for $A_3$, we have $\scal{\phi(\tilde{x}_i,\tilde{y}_i)}{A_3\phi(\tilde{x}_i,\tilde{y}_i)}_\hhxy = 0$. 
Let's see what happens to the penalization terms. For the quadratic term, we have $-\|u\|^2_\hhx = -\|u_1\|^2_\hhx - \|u_2\|^2_\hhx < -\|u_1\|^2_\hhx$ and analogously for $v$. For the trace term we have $\tr(A) = \tr(A_1) + \tr(A_3)$. Moreover $A \in \pdm{\hhxy}$ implies that $A_1 \succeq 0$ and by the Schur complement property  $A_3 \succeq A_2^* A_1^{-1} A_2 \succeq 0$. Then if $A_3 \succeq 0$ and different from zero we have $-\tr(A) < -\tr(A_1)$. If $A_2$ is different from zero this implies by the Schur complement property that $A_3$ is a positive definite operator different from zero and so $-\tr(A) < -\tr(A_1)$.
\end{proof}

\begin{lemma}[Problem~\eqref{eq:w_hat} has a maximizer]\label{lm:prob_w_hat_has_minimizer}
When $\la_1, \la_2 > 0$ and when there exists an admissible point $(\bar{u},\bar{v},\bar{A})$ with $\bar{u} \in \hhx, \bar{v} \in \hhy, \bar{A} \in \pdm{\hhxy}$, Problem~\eqref{eq:w_hat} admits a maximizer.
\end{lemma}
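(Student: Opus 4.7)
The plan is to reduce the problem to a finite-dimensional optimization over $\hat{\hh}_X \times \hat{\hh}_Y \times \pdm{\hat{\hh}_{XY}}$ by invoking Lemma~\ref{lm:representer}, and then to conclude existence via a standard coercivity-plus-closedness argument. Since $\hat{\hh}_X$, $\hat{\hh}_Y$ and $\hat{\hh}_{XY}$ are spanned by finitely many vectors, every object in the reduced problem lives in a finite-dimensional space, where any two norms are equivalent and classical topological arguments apply.

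First, I would show that the reduced feasible set is nonempty. Decomposing the given admissible point $(\bar u, \bar v, \bar A)$ along $\hat{\hh}_X \oplus \hat{\hh}_X^\bot$, $\hat{\hh}_Y \oplus \hat{\hh}_Y^\bot$, and the corresponding four-block splitting of $\hhxy \otimes \hhxy$, Lemma~\ref{lm:representer} guarantees that the projected triple $(\bar u_1, \bar v_1, \bar A_1)$ is admissible (and, by Schur complement, $\bar A_1 \in \pdm{\hat{\hh}_{XY}}$). Moreover, since Lemma~\ref{lm:representer} tells us that any admissible triple with a nontrivial orthogonal component has strictly smaller objective value than its projection, the supremum of $V_{\la_1,\la_2}$ over the full problem coincides with the supremum over the reduced finite-dimensional problem.

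Next, I would verify that the objective functional $V_{\la_1,\la_2}$ is continuous and coercive on the reduced space. Continuity is immediate as the objective is a polynomial (of degree at most two) in the finite-dimensional coordinates. For coercivity, the quadratic penalty $-\la_2(\|u\|^2_\hhx+\|v\|^2_\hhy)$ with $\la_2>0$ dominates the linear terms $\scal{u}{\hat w_\mu}_\hhx + \scal{v}{\hat w_\nu}_\hhy$ as $\|u\|_\hhx + \|v\|_\hhy \to \infty$, while $-\la_1\tr(A)$ with $\la_1 > 0$ sends the functional to $-\infty$ as $\tr(A)\to\infty$; since the operator $A$ is PSD and lives in finite dimensions, $\tr(A)$ is a norm on $\pdm{\hat{\hh}_{XY}}$, so this does imply coercivity along $A$.

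Finally, the reduced feasible set is closed: the $\ell$ equality constraints are continuous (in fact, linear) in $(u,v,A)$, and the PSD cone $\pdm{\hat{\hh}_{XY}}$ is closed in the finite-dimensional space of self-adjoint operators on $\hat{\hh}_{XY}$. Consequently, the upper level set of $V_{\la_1,\la_2}$ at value $V_{\la_1,\la_2}(\bar u_1, \bar v_1, \bar A_1)$, intersected with the feasible set, is nonempty, closed and (by coercivity) bounded, hence compact. A continuous function on a nonempty compact set attains its maximum, which yields a maximizer of the reduced problem and, via Lemma~\ref{lm:representer}, a maximizer of the original Problem~\eqref{eq:w_hat}. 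The only subtlety here is ensuring that $A_1$ inherits positivity from $A$, which is precisely what the Schur-complement argument in the proof of Lemma~\ref{lm:representer} supplies.
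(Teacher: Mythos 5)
Your proposal is correct and follows essentially the same route as the paper: reduce to the finite-dimensional subspaces via the representer theorem (Lemma~\ref{lm:representer}), note that the projected admissible point keeps the feasible set nonempty, and conclude by coercivity of $-V_{\la_1,\la_2}$ on the closed reduced feasible set (the paper delegates this last Weierstrass-type step to a standard existence result of Bertsekas, while you spell out the compact-level-set argument explicitly). The details you add — trace being norm-equivalent on the finite-dimensional PSD cone, linearity and hence closedness of the subsampled equality constraints, and the Schur-complement positivity of $A_1$ — are exactly the ingredients the paper's proof relies on implicitly.
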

\begin{proof}
First define $S = \hhx \times \hhy \times (\hhxy \otimes \hhxy)$ and $\hat{S} =  \hat{\hh}_X \times \hat{\hh}_Y \times (\hat{\hh}_{XY} \otimes \hat{\hh}_{XY})$.
From the lemma above, we have that for any admissible point in $S \setminus \hat{S}$ there exists another admissible point in $\hat{S}$ that has a strictly larger value. Note moreover that $\hat{S}$ is a finite dimensional Hilbert space (with dimension at most $\ell^2(\ell + 1)^2$) and that (minus) the functional of Problem~\eqref{eq:w_hat} is coercive on it. Note moreover that \cref{eq:w_hat} is bounded from above since $V_{\la_1,\la_2}(u,v,A) \leq \|u\|_\hhx \|\hat{w}_\mu\|_\hhx + \|v\|_\hhy \|\hat{w}_\nu\|_\hhy - \la_2(\|u\|^2_\hhx + \|v\|^2_\hhy)$ has a maximum in $u,v$. Since Problem~\eqref{eq:w_hat} has also an admissible point by assumption, then it admits at least one maximizer \cite[see, e.g., Proposition 3.2.1, Page 119 of][]{bertsekas2009convex}.
\end{proof}

\section{Proof of Corollary~\ref{cor:costs} and Theorem~\ref{thm:complexities}}\label{sect:kme}

\begin{proof}{\bf of Corollary~\ref{cor:costs}.} 
The result is obtained by plugging in \cref{thm:uniform} the bounds on $\gamma = \|w_\mu - \hat{w}_\mu\|_\hhx + \|w_\nu - \hat{w}_\nu\|_\hhy$.
We deal now with the three scenarios.
\paragraph{(Exact integral)} In this case, since $\hat{w}_\mu := w_\mu, \hat{w}_\nu := w_\nu$, then $\gamma = 0$.

\paragraph{(Evaluation)} We use here the construction used to analyze kernel least squares, e.g., from \cite{caponnetto2007optimal,rosasco2010learning} \citep[see also][]{steinwart2008support}. We will do the construction for $w_\mu$, since the case of $w_\nu$ is analogous. We recall that $\phi_X: X \to \hhx$ is uniformly bounded and continuous on $X$, since we are considering the Sobolev kernel (see Proposition~\ref{ex:sobolev-kernel}).  Denote by $T \in \pdm{\hhx}$ the operator defined as $T = \int \phi_X(x) \otimes \phi_X(x) dx$ where $dx$ is the Lebesgue measure. Note that $T$ is trace class, indeed $\tr(T) = \int \tr(\phi_X(x) \otimes \phi_X(x)) dx = \int \|\phi_X(x)\|^2_\hhx dx \leq \textrm{vol}(X) \sup_{x \in X} \|\phi_X(x)\|^2_\hhx < \infty$. For any $f, g \in \hhx$, by the reproducing property
\begin{align}
    \scal{f}{Tg}_{\hhx} &= \int \scal{f}{(\phi_X(x) \otimes \phi_X(x)) g}_\hhx dx \\
    & = \int \scal{f}{\phi_X(x)}_\hhx\scal{g}{\phi_X(x)}_\hhx dx = \int f(x) g(x) dx.
\end{align}
In particular the equation above implies that $\|f\|^2_{L^2(X)} = \|T^{1/2} f\|^2_\hhx$ for any $f \in \hhx$.
Now, note that by assumption in this Corollary, we have chosen the kernel $k_X = k_{m+1}$ so $\hhx$ corresponds to the Sobolev space $\hhx = H^{m+1}(X)$ (see Proposition~\ref{ex:sobolev-kernel}). Now by \cref{assum:measures}, $\mu$ has a density that we denote $g_\mu$, that is differentiable up to order $m$ and such that all the derivatives of order $m$ are Lipschitz continuous. Since $X$ is convex and bounded, then Lipschitz continuity of all the derivatives of order $m$ implies that all the weak derivatives up to order $m+1$ are in $L^\infty(X)$. Since $L^\infty(X) \subset L^2(X)$, by the boundedness of $X$, we have that all the weak derivatives of $g_\mu$ up to order $m+1$ belong to $L^2(X)$, i.e. $\|g_\mu\|_{H^{m+1}(X)} < \infty$ so $g_\mu \in H^{m+1}(X) = \hhx$. So, by the reproducing property
\begin{align}
    w_\mu & = \int \phi_X(x) d\mu(x) = \int \phi_X(x) g_\mu(x) dx \\
    &= \int \phi_X(x) \scal{\phi_X(x)}{g_\mu}_\hhx dx = \int (\phi_X(x) \otimes \phi_X(x)) g_\mu \, dx = T g_\mu.
\end{align}
Now, the estimator $\hat{w}_\mu$ is defined as $\hat{w}_\mu = \int_X \phi_X(x) \hat{g}_\mu(x),$ where $\hat{g}_\mu \in \hhx$ is the {\em Kernel Least Squares estimator} \citep[see][]{narcowich2005sobolev,caponnetto2007optimal} of the density of $\mu$ that is $g_\mu$. With the same reasoning as above, we see that $\hat{w}_\mu = T \hat{g}_\mu$.
Now note that 
$$\|w_\mu - \hat{w}_\mu\|_\hhx = \|T(g_\mu - \hat{g}_\mu)\|_\hhx \leq \|T^{1/2}\|_{op} \|T^{1/2}(g_\mu - \hat{g}_\mu)\|_\hhx = \|T^{1/2}\|_{op} \|g_\mu - \hat{g}_\mu\|_{L^2(X)}.$$
Now $\|T^{1/2}\|^2_{op} = \|T^{1/2}\|^2_{op} \leq \tr(T) < \infty$ as we have seen above. Moreover $\|g_\mu - \hat{g}_\mu\|_{L^2(X)}$ is controlled by classical results on approximation theory, e.g.\ Proposition 3.2 of \cite{narcowich2005sobolev} (applied with $\alpha = 0$ and $q = 2$). It is possible to apply such results as the set $X$ is convex bounded and so it satisfies the required {\em uniform interior cone condition} \citep{wendland2004scattered} (see Lemma~\ref{lm:uic-convex-set}, \cpageref{lm:uic-convex-set}). The result guarantees that there exists two constants $C, h_0$ depending only on $X$, such that $\|g_\mu - \hat{g}_\mu\|_{L^2(X)} \leq C h^{m+1} \|g_\mu\|_\hhx$, where $h$ is the {\em fill distance} (see \cref{eq:fill-distance}) of the sampled $n_\mu$ points, with respect to $X$. Now by Lemma~\ref{lm:uic-sampling},  \cpageref{lm:uic-sampling} we have that $h \leq (C' n_\mu \log(C'' n_\mu/\delta))^{1/d}$ with probability at least $1-\delta$ for some constants $C', C''$ depending on $d, X$. Then, finally for some constant $C'''$ we have
\begin{align}
    \|w_\mu - \hat{w}_\mu\|_\hhx &\leq \tr(T)^{1/2} \|g_\mu - \hat{g}_\mu\|_{L^2(X)}\\
    & \leq \tr(T)^{1/2} C \|g_\mu\|_\hhx (C' n_\mu \log(C'' n_\mu/\delta))^{1/d} \leq C''' n_\mu^{-(m+1)/d} \log(n_\mu/\delta).
\end{align} 

\paragraph{(Sampling)} In this case we have $\hat{w}_\mu = \frac{1}{n_\mu} \sum_{i \in [n_\mu]} \phi_X(x_i)$ where $x_1, \dots, x_n$ are independently and identically distributed according to $\mu$. Then $\xi_i = \phi_X(x_i)$ for $i \in [n_\nu]$ are i.i.d. random vectors and $\hat{w}_\mu = \mathbb{E} \xi_1$. Since $\phi_X:X \to \hhx$ is uniformly bounded on $X$ (see Proposition~\ref{ex:sobolev-kernel}) denote by $c$ that bound. We have that $\|\xi_i\|_\hhx \leq c$ almost surely and $\mathbb{E} \|\xi_i - \mathbb{E} \xi_i\|^2_\hhx \leq 2c$ for all $i \in [n_\mu]$. Then we can apply the Pinelis inequality \citep[see Proposition 2 of][]{caponnetto2007optimal}, to control $\hat{w}_\mu = \frac{1}{n_\mu} \sum_{i=1}^{n_\mu} \xi_i$, for which the following holds with probability $1-\delta$
$$\|w_\mu - \hat{w}_\mu\|_\hhx = \|\mathbb{E} \xi_1 - \frac{1}{n_\mu} \sum_{i=1}^{n_\mu} \xi_i\|_\hhx \leq 4c n_\mu^{-1/2} \log \frac{6 n_\mu}{\delta}.$$
\end{proof}

\begin{proof}{\bf of Theorem~\ref{thm:complexities}.}
This theorem is a direct consequence of Corollary~\ref{cor:costs}. Let $\eps > 0, \ell, n_\mu, n_\nu \in \N$. We denote by $f(x) \asymp g(x)$ the fact that there exists two constants $0 < C_1 \leq C_2$ not depending on $x$ such that $C_1 g(x) \leq f \leq C_2 g(x)$ (in our particular case this means that the constants will not depend on $\eps, \ell, n_\mu, n_\nu$).
For the rest of the proof we will fix $\ell \asymp \eps^{-2d/(m-d)}$. Indeed, this choice implies that $\ell^{-(m-d)/2d} = O(\eps)$. We recall, moreover, from \cref{eq:cost} (that is proven in \cref{sec:algo}) that the computational time to achieve the estimator is $\tilde{O}(C + E \ell + \ell^{3.5})$ and the required memory is $O(\ell^2)$, where $E, C$ are specified for each scenario in \cref{sec:mean_estimators}.
Now we will quantify the complexity for the three scenarios.

\paragraph{(Exact integral)} In this case, by Corollary~\ref{cor:costs}
$$|\hOT - \OT| = \tilde{O}(\ell^{-(m-d)/2d}) = \tilde{O}(\eps).$$
Note that, since for this scenario $C = O(1), E = O(1)$, the complexity in time is 
$$\tilde{O}(C + E\ell + \ell^{3.5}) = \tilde{O}(\ell^{3.5}) = \tilde{O}(\eps^{-7d/(m-d)}).$$
In space, analogously $O(\ell^2) = O(\eps^{-4d/(m-d)})$.

\paragraph{(Evaluation)} In this case we choose $n_\nu \asymp n_\mu \asymp  O(\eps^{-d/(m+1)})$. Indeed, with this choice $n_\mu^{-(m+1)/d} = O(\eps)$ (analogously for $n_\nu$). Then, by Corollary~\ref{cor:costs}
$$|\hOT - \OT| = \tilde{O}(n_\mu^{-(m+1)/d} + n_\nu^{-(m+1)/d} + \ell^{-(m-d)/2d}) = \tilde{O}(\eps).$$
From a computational viewpoint $C = O(n_\mu^3 + n_\nu^3)$ and $E = O(n_\mu + n_\nu)$. Then the time complexity is
\begin{align}
    \tilde{O}(C + E\ell + \ell^{3.5}) &= \tilde{O}(n_\mu^3 + n_\nu^3 + (n_\mu + n_\nu)\ell + \ell^{3.5}) \\
    &= \tilde{O}(\eps^{-3d/(m+1)} + \eps^{-d/(m+1)-2d/(m-d)} + \eps^{-7d/(m-d)}) = O(\eps^{-7d/(m-d)}),
\end{align}
wile the space complexity is $O(\ell^2) = O(\eps^{-4d/(m-d)})$.

\paragraph{(Sampling)} In this case we choose $n_\nu \asymp n_\mu \asymp  O(\eps^{-2})$. Indeed with this choice $n_\mu^{-1/2} = O(\eps)$ (analogously for $n_\nu$). Then, by Corollary~\ref{cor:costs}
$$|\hOT - \OT| = \tilde{O}(n_\mu^{-1/2} + n_\nu^{-1/2} + \ell^{-(m-d)/2d}) = \tilde{O}(\eps).$$
From a computational viewpoint $C = O(n_\mu^2 + n_\nu^2)$ and $E = O(n_\mu + n_\nu)$. Then the time complexity is
\begin{align}
\tilde{O}(C + E\ell + \ell^{3.5}) &= \tilde{O}(n_\mu^2 + n_\nu^2 + (n_\mu + n_\nu)\ell + \ell^{3.5}) \\
&= \tilde{O}(\eps^{-4} + \eps^{-2 - 2d/(m-d)} + \eps^{-7d/(m-d)}) = \tilde{O}(\eps^{-\max(4,7d/(m-d))}).
\end{align}
Also in this case the space complexity is $O(\ell^2) = O(\eps^{-4d/(m-d)})$.
\end{proof}





\section{Dual Algorithm and Computational Bounds}\label{sec:algo}

In this section, we describe a dual algorithmic procedure to compute \cref{eq:ot_hat_primal}, and bound the computational complexity and memory footprint it requires to achieve a given precision. We start by deriving \cref{eq:w_hat_dual}, the dual formulation of \cref{eq:w_hat}, and express \cref{eq:ot_hat_primal} as a function of the dual solution $\hat{\gamma}$.

\begin{theorem}\label{thm:dual_problem}
    The dual problem of \cref{eq:w_hat} is \cref{eq:w_hat_dual}. Further, the estimator $\hOT$ can be expressed as a function of the solution $\hat{\gamma}$ of \cref{eq:w_hat_dual}:
    \begin{equation}
    \hOT = \frac{q^2}{2\la_2} - \frac{1}{2\la_2}\sum_{j=1}^\ell\hat{\gamma}_j (\hat{w}_\mu(\tx_j) + \hat{w}_\nu(\ty_j)).
    \end{equation}
\end{theorem}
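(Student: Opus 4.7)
The plan is a textbook Lagrangian duality calculation that exploits the strong concavity of the primal \eqref{eq:w_hat} in $(u,v)$ and the conic structure of the constraint $A \in \pdm{\hhxy}$.

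First I would introduce multipliers $\gamma_1,\dots,\gamma_\ell \in \R$ for the $\ell$ equality constraints of \eqref{eq:w_hat}, keep the cone constraint $A \in \pdm{\hhxy}$ inside the Lagrangian $L(u,v,A,\gamma)$, and take the supremum over $(u,v)\in\hhx\times\hhy$ first. Because the $\la_2$-term makes $L$ strongly concave in $(u,v)$ and the reproducing property gives $u(\tx_j) = \scal{u}{\phi_X(\tx_j)}_\hhx$, the first-order condition yields the closed form $\hat u = \tfrac{1}{2\la_2}\bigl(\hat w_\mu - \sum_{j} \hat\gamma_j \phi_X(\tx_j)\bigr)$ and analogously for $\hat v$.

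Next I would eliminate $A$. Rewriting $\sum_j \gamma_j \scal{\phi(\tx_j,\ty_j)}{A\phi(\tx_j,\ty_j)}_\hhxy = \tr\bigl(A \sum_j \gamma_j \phi(\tx_j,\ty_j)\otimes\phi(\tx_j,\ty_j)\bigr)$, the $A$-dependent part is linear in $A\in\pdm{\hhxy}$, so its supremum equals $0$ (attained at $A=0$) precisely when $\la_1 \Id + \sum_j \gamma_j \phi(\tx_j,\ty_j)\otimes\phi(\tx_j,\ty_j) \succeq 0$ on $\hhxy$, and $+\infty$ otherwise. Outside $\mathrm{span}\{\phi(\tx_j,\ty_j):j\in[\ell]\}$ this operator acts as $\la_1 \Id$ and is trivially PSD, while on the span it is isometrically congruent via the Cholesky factor $\bR$ of $\bK$ to $\sum_j \gamma_j \Phi_j \Phi_j^\top + \la_1 \eye_\ell$ on $\R^\ell$; this gives the LMI of \eqref{eq:w_hat_dual}. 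Substituting $\hat u(\gamma), \hat v(\gamma)$ back into $L$ and expanding $\|\hat w_\mu - \sum_j \gamma_j \phi_X(\tx_j)\|^2_\hhx$ and the analogous term for $\nu$ yields, via $\|\hat w_\mu\|^2_\hhx + \|\hat w_\nu\|^2_\hhy = q^2$ and the definitions of $\bQ, z_j$, exactly the quadratic dual objective of \eqref{eq:w_hat_dual}.

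The only nonroutine step is strong duality: I would invoke the representer theorem (\cref{lm:representer,lm:prob_w_hat_has_minimizer}) to reduce \eqref{eq:w_hat} to an equivalent finite-dimensional SDP, where Slater's condition is verified by slightly perturbing an admissible triple to obtain a strictly feasible interior point, and then apply conic strong duality. For the closed-form expression of $\hOT$, I would substitute $\hat u = \tfrac{1}{2\la_2}(\hat w_\mu - \sum_j \hat\gamma_j \phi_X(\tx_j))$ into $\scal{\hat u}{\hat w_\mu}_\hhx$ and invoke the reproducing property $\scal{\phi_X(\tx_j)}{\hat w_\mu}_\hhx = \hat w_\mu(\tx_j)$, yielding $\scal{\hat u}{\hat w_\mu}_\hhx = \tfrac{1}{2\la_2}\bigl(\|\hat w_\mu\|^2_\hhx - \sum_j \hat\gamma_j \hat w_\mu(\tx_j)\bigr)$; adding the analogous expansion for $\scal{\hat v}{\hat w_\nu}_\hhy$ collapses to the announced identity. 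The main obstacle is the handling of strong duality in an infinite-dimensional conic setting, which the representer-theorem reduction sidesteps by moving everything to finite dimensions before invoking Slater.
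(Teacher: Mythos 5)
Your proposal is correct and follows essentially the same route as the paper's proof: the same stationarity conditions eliminating $u,v$ via strong concavity, the same elimination of $A$ by observing the Lagrangian is linear over the PSD cone (yielding the LMI in \eqref{eq:w_hat_dual}), the representer-theorem reduction to finite dimensions, and the same substitution giving the closed form for $\hOT$. The only cosmetic differences are that you dualize in the infinite-dimensional space and split off the orthogonal complement of $\mathrm{span}\{\phi(\tilde{x}_j,\tilde{y}_j)\}$ afterwards, whereas the paper reduces the primal to a finite-dimensional SDP first, and that you justify strong duality via a Slater-type perturbation (equally workable, and if anything more explicit than the paper's appeal to convexity, feasibility and existence of a maximizer).
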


\begin{proof}{\bf of \Cref{thm:dual_problem}.}
\paragraph{Finite-dimensional representation.}
\BM{Added this paragraph, to introduce $\bB$, etc.}
Let us start by formulating \eqref{eq:w_hat} using a finite-dimensional representation for $A\in\pdm{\hhxy}$. Following \citet{marteau2020non}, observe that problem \eqref{eq:w_hat} needs only to be solved w.r.t. $A$ in the finite-dimensional Hilbert space spanned by $\{\phi(\tx_j, \ty_j) : j \in [\ell]\}$. This result is formally proven for our setting in \Cref{sec:rep_thm}. Therefore, it is sufficient to consider positive operators of the form $A = \sum_{ij = 1}^{\ell} C_{ij} \phi(\tx_i, \ty_i)\otimes\phi(\tx_j, \ty_j)$, with $\bC \in \mathbb{S}_+(\RR^\ell)$. With this parameterization, we have $\tr A = \tr(\bC\bK)$ and 
$\dotp{\phi(\tx_i, \ty_i)}{\A\phi(\tx_j, \ty_j)} = (\bK\bC\bK)_{ij},$ where $\bK = [k_{XY}((\tx_i, \ty_i), (\tx_j, \ty_j)]_{ij=1}^\ell$. 
As \cite{rudi2020global}, we can thus consider the Cholesky decomposition $\bK = \bR\bR^\top$ (or alternatively take the square root $\bR = \bK\srt$), and represent problem \eqref{eq:w_hat} in terms of the columns $\{\Phi_{j} : j \in [\ell]\}$ of $\bR$ and solve directly for $\bB = \bR\bC\bR^\top$:
\begin{equation}\label{eq:w_hat_finite_dim}
\begin{aligned}
    &\underset{\substack{ u \in \hhx, v \in \hhy  \\ \bB \in \SS_+(\RR^\ell) }}{\max} && \dotp{u}{\hat{w}_\mu} + \dotp{v}{\hat{w}_\nu} - \lambda_1\tr \bB - \lambda_2(\|u\|_{\Hcal_X}^2 + \|v\|_{\Hcal_Y}^2) \\
     & \st \forall j\in [\ell], 
     && c(\tx_j, \ty_j) - u(\tx_j) - v(\ty_j) = \Phi_j^\top\bB\Phi_j.
 \end{aligned}
\end{equation}
\paragraph{Deriving the dual.}
Next, let us observe that problem \eqref{eq:w_hat} is convex, and admits a feasible point by Corollary~\ref{cor:asm1-Astar} and a maximizer by Lemma \ref{lm:prob_w_hat_has_minimizer}. The same applies to \eqref{eq:w_hat_finite_dim}. Therefore, strong duality holds. 
The Lagragian of \cref{eq:w_hat_finite_dim} is
\begin{align}\label{eq:lagragian}
    \begin{split}
    \Lcal(u, v, \bB, \gamma) &=  \dotp{u}{\hat{w}_\mu} + \dotp{v}{\hat{w}_\nu} - \lambda_1\tr \bB - \la_2\|u\|_\hhx^2 - \la_2\|v \|_\hhy^2 \\
    &+ \sum_{i=1}^\ell\gamma_{i}(c(\tilde{x}_i, \tilde{y}_i) - \dotp{u}{\phi_X(\tilde{x}_i)} - \dotp{v}{\phi_Y(\tilde{y}_i)} - \Phi_i^\top\bB\Phi_i).
    \end{split}
\end{align}
At the optimum, we have $\nabla_u \Lcal(u, v, \bB, \gamma) = 0$ and $\nabla_v \Lcal(u, v, \bB, \gamma) = 0$ , which yields
\begin{align}\label{eq:uv_opt}
    \begin{split}
    u &= \frac{1}{2\lambda_2}(\hat{w}_\mu - \sum_{i=1}^\ell \gamma_i \phi_X(\tilde{x}_i)) \\
    v &= \frac{1}{2\lambda_2}(\hat{w}_\nu - \sum_{i=1}^\ell \gamma_i \phi_Y(\tilde{y}_i)).
    \end{split}
\end{align}
Let us now derive the optimality condition on $\bB$: we have
\begin{align}\label{eq:B_opt}
\begin{split}
    \underset{\bB\in\SS_+(\RR^\ell)}{\sup} -\sum_{i=1}^\ell\gamma_{i}\Phi_i^\top\bB\Phi_i - \lambda_1\tr \bB
    &= \underset{\bB\in\SS_+(\RR^\ell)}{\sup} \dotp{\bB}{ -(\sum_{i=1}^\ell\gamma_{i}\Phi_i\Phi_i^\top + \lambda_1 \eye_\ell)}\\
    & = \begin{cases} 0 & \mbox{if } \sum_{i=1}^\ell\gamma_{i}\Phi_i^\top\Phi_i + \lambda_1 \eye_\ell \succeq 0 \\
    -\infty & \mbox{otherwise}. 
    \end{cases}
\end{split}
\end{align}
Plugging \cref{eq:uv_opt} and \cref{eq:B_opt} in the \cref{eq:lagragian}, we get \eqref{eq:w_hat_dual}. Finally, using \cref{eq:uv_opt}, we have
\begin{align}
    \hOT &= \dotp{\hat{u}}{\hat{w}_\mu}_\hhx + \dotp{\hat{u}}{\hat{w}_\nu}_\hhx = \frac{q^2}{2\la_2} - \frac{1}{2\la_2}\sum_{j=1}^\ell\hat{\gamma}_j (\hat{w}_\mu(\tx_j) + \hat{w}_\nu(\ty_j)).
\end{align}
\end{proof}

To solve \eqref{eq:w_hat_dual}, it is possible to use standard software packages~\citep{boyd2004convex}. Alternatively, it can be made more scalable by adding a self-concordant barrier term to \eqref{eq:w_hat_dual} and using interior point methods with Newton steps. For a given barrier penalization $\delta > 0$, we thus aim to solve
 \begin{align}\label{eq:w_hat_dual_barrier}
\begin{split}
      \underset{\gamma \in \RR^\ell}{\min}~& \frac{1}{4\lambda_2} \gamma^\top {\bf Q} \gamma - \frac{1}{2\lambda_2}\sum_{j=1}^\ell \gamma_j z_j + \frac{q^2}{4\la_2}  - \frac{\delta}{\ell} \log\det (\sum_{i=1}^\ell\gamma_{i}\Phi_{i}\Phi_{i}^\top + \lambda_1 \eye_{\ell})) \\
      &~~~ \mbox{ such that } ~~~  \sum_{j=1}^\ell \gamma_j \Phi_j\Phi_j^\top + \lambda_1 \eye_\ell \succeq 0.
 \end{split}
\end{align}
Starting from an initial value $\delta_0$, the barrier method~\citep{nemirovski2004interior} consists in iteratively solving \cref{eq:w_hat_dual_barrier} (using Newton iterations), and progressively decreasing $\delta$. In \Cref{thm:dual_algorithm,thm:estimator_precision}, we precisely analyze the complexity of the barrier method applied to \cref{eq:w_hat_dual}, and bound the number of operations required to obtain an estimator of $\OT$ with a desired accuracy.
\begin{theorem}\label{thm:dual_algorithm}
Using a dual interior point method, a solution of problem \eqref{eq:w_hat} with value precision $O(\tau)$ can be obtained in $O(C + E\ell + \ell^{3.5}\log(\frac{\ell}{\tau}))$ operations and $O(\ell^2)$ memory, where $E$ is the cost of querying $\hat{w}_\mu$ and $\hat{w}_\nu$, and $C$ is the cost of computing $q^2$.
\end{theorem}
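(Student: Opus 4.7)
\textbf{Proof proposal for \Cref{thm:dual_algorithm}.} The plan is to split the work into a one-shot precomputation phase and an iterative optimization phase, and to control each separately. First I would precompute all the problem data that appear in \eqref{eq:w_hat_dual_barrier}: the matrix $\bQ \in \R^{\ell\times\ell}$ (costing $O(\ell^2)$ kernel evaluations, treating $d$ as a constant), the scalar $q^2$ (cost $C$ by hypothesis), and the vector $z\in\R^\ell$, whose $j$-th coordinate requires one query of $\hat w_\mu$ and one of $\hat w_\nu$ plus one evaluation of $c$, giving total cost $O(E\ell)$. In parallel I would form $\bK\in\R^{\ell\times\ell}$ and its Cholesky factor $\bR$ (cost $O(\ell^3)$, memory $O(\ell^2)$), from which the columns $\Phi_j$ are read off. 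This produces, in $O(C+E\ell+\ell^3)$ time and $O(\ell^2)$ memory, every quantity needed to evaluate the objective and the constraint of \eqref{eq:w_hat_dual_barrier}.

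Next I would run the standard path-following interior-point method of \citep{nesterov1994interior} on the barrier formulation \eqref{eq:w_hat_dual_barrier}. The key structural observation is that the constraint $\sum_j \gamma_j \Phi_j\Phi_j^\top + \lambda_1 \eye_\ell \succeq 0$ defines the pre-image of the $\ell\times\ell$ PSD cone under an affine map; the $-\log\det$ barrier is then a $\nu$-self-concordant barrier with parameter $\nu=\ell$. By the Nesterov--Nemirovski theory, reducing the duality gap from its initial value to $O(\tau)$ requires $O(\sqrt{\ell}\,\log(\ell/\tau))$ outer iterations, each consisting of a damped Newton step on a strongly convex, self-concordant problem in $\R^\ell$.

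The main cost per Newton step is to assemble the gradient and Hessian of \eqref{eq:w_hat_dual_barrier} and solve one $\ell\times\ell$ linear system. The quadratic part contributes $\tfrac{1}{2\lambda_2}\bQ$ to the Hessian, available for free after precomputation. The Hessian of the log-det barrier at $\gamma$ is the matrix with $(i,j)$ entry $(\delta/\ell)\,\Phi_i^\top M(\gamma)^{-1}\Phi_j\,\Phi_j^\top M(\gamma)^{-1}\Phi_i$, where $M(\gamma)=\sum_k\gamma_k\Phi_k\Phi_k^\top+\lambda_1\eye_\ell$; this matrix can be formed in $O(\ell^3)$ operations by first computing the Cholesky factor of $M(\gamma)$ and then the products $M(\gamma)^{-1/2}\Phi_j$ for all $j$. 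Inverting the resulting Hessian to get the Newton direction is another $O(\ell^3)$. Multiplying $O(\ell^3)$ by the $O(\sqrt{\ell}\log(\ell/\tau))$ iteration count yields the announced optimization cost $O(\ell^{3.5}\log(\ell/\tau))$, and throughout only $O(\ell^2)$ words of memory are needed to hold $\bQ$, $\bR$, $M(\gamma)$ and its factor.

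The most delicate step will be the per-iteration bookkeeping: I need to argue that the barrier parameter $\nu=\ell$ really controls the outer iteration count (this is the standard result for the log-det barrier on $\SS_+(\R^\ell)$ lifted through an affine map, so it should go through cleanly), and that a feasible well-centered starting point can be produced cheaply, for instance by taking $\gamma=0$, which lies in the interior of the feasible set since $\lambda_1\eye_\ell\succ 0$, and then running a short centering phase whose cost is absorbed in the $O(\ell^{3.5}\log(\ell/\tau))$ bound. Combining precomputation and optimization gives the total $O(C+E\ell+\ell^{3.5}\log(\ell/\tau))$ time and $O(\ell^2)$ memory claimed in the statement.
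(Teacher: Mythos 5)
Your proposal is correct and follows essentially the same route as the paper: precompute the problem data ($\bQ$, $z$, $q^2$, the Cholesky factor of $\bK$), then run a path-following interior-point method on the barrier formulation \eqref{eq:w_hat_dual_barrier}, using that the $\log\det$ barrier is self-concordant with parameter $\ell$ to get $O(\sqrt{\ell}\log(\ell/\tau))$ Newton iterations at $O(\ell^3)$ cost each. The only difference is that you spell out the initialization ($\gamma=0$ is strictly feasible since $\lambda_1\eye_\ell \succ 0$) and the $O(\ell^3)$ assembly of the barrier Hessian via a Cholesky factorization of $M(\gamma)$, details the paper leaves implicit.
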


\begin{proof}{\bf of \Cref{thm:dual_algorithm}.}
Removing terms that are constant in $\gamma$, problem \eqref{eq:w_hat_dual_barrier} is equivalent to minimizing the dual functional
%
\begin{align}
\begin{split}
    J(\gamma) \defeq &\frac{1}{4\lambda_2}\gamma^\top\bQ \gamma - \frac{1}{2\lambda_2}\sum_{j=1}^\ell \gamma_j z_j -\frac{\delta}{\ell} \log\det (\sum_{i=1}^\ell\gamma_{i}\Phi_{i}\Phi_{i}^\top + \lambda_1 \eye_{\ell}).
\end{split}
\end{align}
Its gradient is 
\begin{align}
    \begin{split}
    J'(\gamma)_i = \frac{1}{2\lambda_2} (\bQ \gamma)_i - \frac{1}{2\lambda_2}z_i - \frac{\delta}{\ell} \Phi_i^\top(\Phi\diag(\gamma) \Phi^\top + \lambda_1 \eye_\ell)^{-1}\Phi_i,~~ i \in [\ell],
    \end{split}
\end{align}
and its Hessian
\begin{align}
    J''(\gamma)_{ij} = \frac{1}{2\lambda_2}\bQ_{ij} + \frac{\delta}{\ell} [\Phi_i^\top(\Phi\diag(\gamma) \Phi^\top + \lambda_1 \eye_\ell)^{-1}\Phi_j]^2,~~ i,j  \in [\ell].
\end{align}
From there, we may minimize $J(\gamma)$ using damped Newton iterations
\begin{align}\label{eq:damped_newton}
    \gamma' = \gamma - \frac{[J''(\gamma)]^{-1}J'(\gamma)}{1 + \sqrt{\frac{\ell}{\delta}}\lambda(\gamma)},
\end{align}
where $\lambda^2(\gamma) = J'(\gamma)^\top[J''(\gamma)]^{-1}J'(\gamma)$ is the Newton decrement, or using backtracking line-search Newton iterations~\citep{boyd2004convex}.

\paragraph{Number of iterations.}
$J''(\gamma)$ can be computed and inverted in $O(\ell^3)$ operations, and assuming $\hat{w}_\mu(\tx_i), i \in [l]$ and $\hat{w}_\nu(\ty_i), i \in [l]$ are precomputed, $J'(\gamma)$ can be computed in $O(\ell^3)$ operations, hence the complexity per iteration is $O(\ell^3)$. 

Let $F(\gamma) \defeq \frac{1}{4\lambda_2}\gamma^\top\bQ \gamma - \frac{1}{2\lambda_2}\sum_{j=1}^\ell \gamma_j z_j  + \frac{q^2}{4\la_2}$ be the objective function of \eqref{eq:w_hat_dual}. Since
$H(\gamma) \defeq - \log\det (\sum_{i=1}^\ell\gamma_{i}\Phi_{i}\Phi_{i}^\top + \lambda_1 \eye_{\ell}))$
is a self-concordant barrier function of concordance parameter $\ell$, standard results on barrier methods imply that $\delta$ controls the deviation (in value) to the optimum of $F$~\citep{nemirovski2004interior}. Moreover, a solution $\tilde{\gamma}$ to \eqref{eq:w_hat_dual} of precision $\tau > 0$, i.e. satisfying $F(\tilde{\gamma})- F(\hat{\gamma}) \leq \tau$ where $\hat{\gamma}$ is the optimum of \eqref{eq:w_hat_dual}, can computed in $O(\sqrt{\ell} \log\frac{\ell}{\tau})$ Newton iterations using an interior point method by progressively decreasing $\delta$ using a suitable scheme until $\delta \leq \tau$: see \citep{nemirovski2004interior}.

Hence, taking into account the $O(E \ell)$ cost of computing $z_j, j=1, ..., \ell$ and the $O(C)$ cost required to compute $q^2$, to achieve a precision $\tau > 0$ a total of $O(C +  E\ell +  \ell^{3.5}\log(\frac{\ell}{\tau}))$ operations and $O(\ell^2)$ memory are required. 

\end{proof}

In \Cref{thm:dual_algorithm}, we may use any of the kernel mean estimators presented in \Cref{sec:mean_estimators} and apply the corresponding computational costs $C$ and $E$. However, the given bounds only apply to the precision in value, i.e., on $ V_{\la_1, \la_2}(\hat{u}, \hat{v}, \hat{A}) - V_{\la_1, \la_2}(u, v, A)$, and not on the solutions $(u, v, A)$ themselves. In \Cref{thm:estimator_precision}, we derive bounds on the algorithmic approximation of the estimator in $\eqref{eq:hat_OT}$ obtained by minimizing \cref{eq:w_hat_dual_barrier} to a precision $\tau > 0$, as a function of its computational complexity.

\begin{theorem}\label{thm:estimator_precision}
Under the same notation and assumptions as \Cref{thm:sample_bound}, let $\tilde{\gamma}$ be obtained by running a barrier method on \eqref{eq:w_hat_dual} to precision $\tau > 0$, i.e.,~by iteratively solving \eqref{eq:w_hat_dual_barrier} and progressively decreasing $\delta$ until $\delta \leq \tau$, as described by \citet{nemirovski2004interior}. Define $\tOT$ as
$$\tOT = \frac{q^2}{2\la_2} - \frac{1}{2\la_2}\sum_{j=1}^\ell\tilde{\gamma}_j (\hat{w}_\mu(\tx_j) + \hat{w}_\nu(\ty_j)).$$
Then $\tOT$ satisfies the following bound
\begin{equation}
 |\tOT - \OT| ~\leq~6 \lambda_2(\|u_\star\|^2_\hhx + \|v_\star\|^2_\hhy) + 6\tfrac{\beta^2}{\la_2} + 6 \lambda_1 \tr A_\star + 6\la_2\tau.
\end{equation}
Further, the considered algorithm to compute $\tOT$ has a cost of $O(C + E\ell + \ell^{3.5}\log(\frac{\ell}{\tau}))$ in time and $O(\ell^2)$ in memory. 
\end{theorem}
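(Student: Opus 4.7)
Proof plan. The strategy is to replay the three-step argument of \cref{thm:sample_bound}, this time substituting the exact primal-dual optimum $(\hat u, \hat v, \hat A, \hat\gamma)$ by an approximate triple reconstructed from $\tilde\gamma$, and tracking the extra $\tau$-slack produced by the barrier method. The computational complexity statement then follows immediately from \cref{thm:dual_algorithm}.

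The starting point is the interior-point analysis of \citet{nemirovski2004interior}: after the barrier parameter $\delta$ has been decreased below $\tau$, the iterate $\tilde\gamma$ (i) remains strictly dual-feasible, $\la_1 \eye_\ell + \sum_i \tilde\gamma_i \Phi_i\Phi_i^\top \succ 0$, and (ii) satisfies $F(\tilde\gamma) \le F(\hat\gamma) + \tau$, where $F$ is the dual objective of \cref{eq:w_hat_dual}. Combining (ii) with the strong-duality identity $F(\hat\gamma) = \bOT$ from \cref{thm:dual_problem}, and using that $(u_\star, v_\star, A_\star)$ is primal-feasible by \cref{cor:asm1-Astar}, weak duality yields
\begin{equation*}
V_{\la_1,\la_2}(u_\star, v_\star, A_\star) ~\le~ F(\tilde\gamma) ~\le~ \bOT + \tau.
\end{equation*}

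Next, I would reconstruct $\tilde u, \tilde v$ from $\tilde\gamma$ via the stationarity formula \eqref{eq:uv_opt}, which by direct computation gives $\tOT = \scal{\tilde u}{\hat w_\mu}_\hhx + \scal{\tilde v}{\hat w_\nu}_\hhy$. To turn the $\tau$-precision on $F$ into a matching precision on $\tOT$, I would introduce an implicit primal operator $\tilde A \in \pdm{\hhxy}$ coming from the barrier stationarity of \cref{eq:w_hat_dual_barrier}, with finite-dimensional representation $\tilde \bB = (\delta/\ell)\bigl(\la_1 \eye_\ell + \sum_i \tilde\gamma_i \Phi_i\Phi_i^\top\bigr)^{-1}$. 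A direct calculation from the KKT conditions of the barrier subproblem then shows that $(\tilde u, \tilde v, \tilde A)$ satisfies the equality constraints of \cref{eq:w_hat} at the $\ell$ sampled points up to an $O(\tau)$ residual and, crucially, that the approximate identity
\begin{equation*}
\tOT ~=~ F(\tilde\gamma) + \la_1 \tr(\tilde A) + \la_2 \bigl(\|\tilde u\|_\hhx^2 + \|\tilde v\|_\hhy^2\bigr) + O(\tau)
\end{equation*}
holds. This is the exact analogue, up to an $O(\tau)$ slack, of the identity $\hOT = F(\hat\gamma) + \la_1 \tr(\hat A) + \la_2\bigl(\|\hat u\|_\hhx^2 + \|\hat v\|_\hhy^2\bigr)$ that is valid at the exact optimum by complementary slackness, as already used in the proof of \cref{thm:sample_bound}.

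Armed with these two ingredients, I would repeat the three steps of the proof of \cref{thm:sample_bound} almost verbatim: (i) \cref{thm:eps-exists} applied to $(\tilde u, \tilde v, \tilde A)$ yields a Kantorovich-admissible pair with the same subsampling error $\eps = C_0 h_\ell^{s-d}(R(\tilde u, \tilde v) + \tr \tilde A)$ as before, absorbing the $O(\tau)$ residual into $\eps$ without changing its order; (ii) the two-sided bound on $F(\tilde\gamma)$ above replaces the strong-duality identity $\bOT = F(\hat\gamma)$ used in the original argument, which introduces a single $+\la_2\tau$ slack term; (iii) the completion-of-the-square bounds on $\tr(\tilde A)$ and $R(\tilde u,\tilde v)$ carry through unchanged. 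The additional $\tau$-slack accumulates linearly, yielding exactly the announced $6\la_2\tau$ term in addition to the three statistical terms of \cref{thm:sample_bound}.

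The main obstacle is the careful construction of $\tilde A$ from the barrier stationarity and the verification of the approximate identity above relating $\tOT$ to $F(\tilde\gamma)$: it is this step that exploits approximate complementary slackness induced by the self-concordant log-det barrier. A naive Cauchy--Schwarz argument in the $\bQ$-metric between $\tilde\gamma$ and $\hat\gamma$ (using only $(\tilde\gamma - \hat\gamma)^\top \bQ (\tilde\gamma - \hat\gamma) \le 4\la_2\tau$) would only produce a weaker $\sqrt\tau$-type dependence, so passing through the barrier-induced $\tilde A$ is essential to recover the linear $\la_2\tau$ dependence claimed in the statement.
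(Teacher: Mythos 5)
Your proposal follows essentially the same route as the paper's proof: reconstruct $(\tilde u,\tilde v,\tilde A)$ from $\tilde\gamma$ via the stationarity formula \eqref{eq:uv_opt} and the barrier-induced $\tilde{\bf B}=\frac{\delta}{\ell}\bigl(\sum_i\tilde\gamma_i\Phi_i\Phi_i^\top+\lambda_1\eye_\ell\bigr)^{-1}$, exploit the duality gap $\delta\leq\tau$ together with feasibility of $(u_\star,v_\star,A_\star)$, and rerun the argument of \cref{thm:sample_bound} with $S$ replaced by $S+\tau$, with the complexity claim imported from \cref{thm:dual_algorithm}. The only nuance is that your KKT computation actually yields that $(\tilde u,\tilde v,\tilde A)$ satisfies the subsampled equality constraints \emph{exactly} at the barrier optimum (not merely up to an $O(\tau)$ residual), which is what the paper uses and which makes the step of ``absorbing the residual into $\eps$'' unnecessary.
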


\begin{proof}{\bf of \Cref{thm:estimator_precision}.}
Let $\tilde{\gamma}$ be obtained by running a barrier method on \cref{eq:w_hat_dual} to precision $\tau$, i.e. by solving \eqref{eq:w_hat_dual_barrier} and decreasing $\delta$ until $\delta\leq \tau$. Then, from properties of barrier methods~\citep{nesterov1994interior} we can associate to $\tilde{\gamma}$ the following primal feasible points, obtained by nullifying the Lagragian of \eqref{eq:w_hat_dual_barrier} at $\tilde{\gamma}$:
\begin{align}\label{eq:ab_opt}
    \begin{split}
    \tilde{u} &= \frac{1}{2\lambda_2}(\hat{w}_\mu - \sum_{i=1}^\ell \tilde{\gamma}_i \phi_X(\tilde{x}_i)) \\
    \tilde{v} &= \frac{1}{2\lambda_2}(\hat{w}_\nu - \sum_{i=1}^\ell \tilde{\gamma}_i \phi_Y(\tilde{y}_i)) \\
    \tilde{A} &= \sum_{ij=1}^\ell B_{ij} \phi(\tx_i, \ty_i)\otimes\phi(\tx_j, \ty_j),
    \end{split}
\end{align}
with $\bB = \frac{\delta}{\ell}(\sum_{i=1}^\ell\tilde{\gamma}_i\Phi_i\Phi_i^\top + \lambda_1\eye_\ell)^{-1}$. In particular, from properties of interior point methods~\citep{nemirovski2004interior,nesterov1994interior}, we have
\begin{enumerate}[(i)]
    \item $(\tilde{u}, \tilde{v}, \tilde{A})$ is a feasible point of \eqref{eq:w_hat},
    \item The duality gap between the objective $\hat{V}_{\lambda_1, \lambda_2}$ of \eqref{eq:w_hat} evaluated at $(\tilde{u}, \tilde{v}, \tilde{A})$ and the objective $F$ of \eqref{eq:w_hat_dual} at $\tilde{\gamma}$ is equal to $\delta$, i.e. $ F(\tilde{\gamma}) - \hat{V}_{\lambda_1, \lambda_2}(\tilde{u}, \tilde{v}, \tilde{A}) = \delta$.
\end{enumerate}

Further, note that we have 
\begin{align}
    \tOT &\defeq \frac{q^2}{2\la_2} - \frac{1}{2\la_2}\sum_{j=1}^\ell\tilde{\gamma}_j (\hat{w}_\mu(\tx_j) + \hat{w}_\nu(\ty_j))\\
    &= \dotp{\tilde{u}}{\hat{w}_\mu}_\hhx + \dotp{\tilde{v}}{\hat{w}_\nu}_\hhy.
\end{align}
Let us now bound $\tOT - \OT$. We will follow similar arguments than in the proof of \Cref{thm:sample_bound}, with an additional $\tau$ precision term.
As in the proof of \Cref{thm:sample_bound}, let $(\tilde{u}_\varepsilon, \tilde{v}_\varepsilon) = (\tilde{u} - \varepsilon/2, \tilde{v} - \varepsilon)$. Since $(\tilde{u}, \tilde{v}, \tilde{A})$ satisfy the constraints of \eqref{eq:w_hat}, from  the same arguments $(\tilde{u}_\varepsilon, \tilde{v}_\varepsilon)$ defines a feasible point for \eqref{EqDualOT}. Hence, we have the equivalent of \cref{eq:upperbound-tildeW-W}:
\begin{equation}\label{eq:upperbound-tildeW-W_2}
     \OT = V(u_\star, v_\star)  \geq V(\tilde{u}_\eps, \tilde{v}_\eps) = V(\tilde{u}, \tilde{v}) - \eps \geq \tOT - \Delta(\tilde{u}, \tilde{v}) - \eps.
\end{equation}
Next, let $\hat{\gamma}$ be the optimum of \eqref{eq:w_hat_dual}, and $F$ the objective function of $\eqref{eq:w_hat_dual}$. By strong duality (see \Cref{thm:dual_problem}), we have $F(\hat{\gamma}) = \hat{V}_{\la_1, \la_2} (\hat{u}, \hat{v}, \hat{A})$. Further, by optimality of $\hat{\gamma}$, we have $F(\hat{\gamma}) \leq F(\tilde{\gamma})$. Hence, using (ii) and $\delta \leq \tau$, we have
\begin{align}
\label{eq:bound-tildeW-to-Vastar_2}  
\hat{V}_{\la_1, \la_2}(\tilde{u}, \tilde{v}, \tilde{A}) 
& = F(\tilde{\gamma}) - \delta\\
& \geq F(\hat{\gamma}) - \delta\\
& = \hat{V}_{\la_1, \la_2}(\hat{u}, \hat{v}, \hat{A}) - \delta \\
& \geq \hat{V}_{\la_1,\la_2}(u_\star, v_\star, A_\star) - \tau\\
& = V(u_\star, v_\star) - \left[V(u_\star, v_\star) - \hat{V}_{\la_1,\la_2}(u_\star, v_\star, A_\star)\right] - \tau \\
& \geq \OT - \Delta(u_\star, v_\star) - \la_1\tr(A_\star) + \la_2 R^2(u_\star,v_\star) - \tau.
\end{align}

From there, the rest of the proof of \Cref{thm:sample_bound} follows identically: developing $\hat{V}_{\la_1, \la_2}(\tilde{u}, \tilde{v}, \tilde{A})$ and combining with \cref{eq:upperbound-tildeW-W_2}, we have
\begin{equation}\label{eq:bound-Wtilde-W_2}
    \la_1\tr(\tilde{A} - A_\star) +  \la_2 (R^2(\tilde{u}, \tilde{v}) - R^2(u_\star, v_\star)) - \Delta(u_\star, v_\star) - \tau \leq \tOT - \OT \leq \Delta(\tilde{u}, \tilde{v}) + \eps.
\end{equation} 
Hence
$$\la_1\tr(\tilde{A}) +  \la_2 R^2(\tilde{u}, \tilde{v}) \leq \la_1\tr(A_\star) +  \la_2 R^2(u_\star,v_\star) + \Delta(u_\star, v_\star) + \Delta(\tilde{u}, \tilde{v}) + \eps + \tau.$$
Therefore, replacing $S$ from the proof of \eqref{thm:sample_bound} with 
$S'  := S + \tau$, and $\hat{u}, \hat{v}, \hat{A}$ with $\tilde{u}, \tilde{v}, \tilde{A}$, the rest of the proof follows and eventually yields
$$
|\tOT - \OT| \leq 6 \lambda_2(R^2(u_\star, v_\star) + \frac{\beta^2}{\la_2^2} + \frac{\lambda_1}{\lambda_2} \tr A_\star + \tau).
$$

To conclude, note that as a consequence of \Cref{thm:dual_algorithm},
$\tOT$ can be computed in $O(C + E\ell + \ell^{3.5}\log(\frac{\ell}{\tau}))$ operations and $O(\ell^2)$ memory.

\end{proof}

\paragraph{Recovering unregularized optimal transport.}
In \cref{eq:w_hat} and in the case of empirical estimators $\hat{w}_\mu$ and $\hat{w}_\nu$ (see \Cref{sec:mean_estimators}), we considered the case where the sample pairs $(\tx_i, \ty_i), i \in [l]$ covering $X\times Y$ are distinct from the samples $x_i\sim \mu, i \in [n_\mu]$ and $y_j \sim \nu, j \in [n_\nu]$. However, covering $X\times Y$ with the $n_\mu n_\nu$ pairs given by the $\mu$ and $\nu$ samples $(x_i, y_j), i \in [n_\mu], j\in [n_\nu]$, we may rewrite \eqref{eq:w_hat_dual} as a regularized optimal transport problem:
%
\begin{equation}\label{eq:dual_mmd_reg}
\begin{aligned}
    &\underset{\Gamma \in \RR^{{n_\mu}\times {n_\nu}}}{\min} &&\sum_{ij} \Gamma_{ij} c(x_i, y_j) + \frac{1}{2\lambda_2} r^T\bK_X r + \frac{1}{2\lambda_2} c^T\bK_Y c\\
      &~~~ \st ~~~ && \sum_{\substack{i=1, ..., n_\mu\\ j=1,...,n_\nu}} \Gamma_{ij}\Phi_{ij}\Phi_{ij}^\top + \lambda_1 \eye_{{n_\mu}{n_\nu}}  \succcurlyeq 0,\\
      &&& r_i = \frac{1}{n_\mu} - \sum_{j=1}^{n_\nu}\Gamma_{ij},\quad i \in [n_\mu],\\
      &&& c_j = \frac{1}{n_\nu}- \sum_{i=1}^{n_\mu}\Gamma_{ij},\quad j \in [n_\nu],
\end{aligned}
\end{equation}
were we reindexed $\Phi_p, p \in [{n_\mu}{n_\nu}]$ as $\Phi_{ij}, i \in [{n_\mu}], j \in [{n_\nu}]$, and where $(\bK_X)_{ij} = k_X(x_i, x_j), i, j\in [{n_\mu}]$, $(\bK_Y)_{ij} = k_Y(y_i, y_j), i, j\in [{n_\nu}]$. Hence, \eqref{eq:dual_mmd_reg} can be interpreted as a regularized optimal transport problem, where $\Gamma \in \RR^{{n_\mu}\times {n_\nu}}$ plays the role of the transportation plan, and the marginal violations are penalized with maximum mean discrepancy (MMD) terms~\citep{gretton2012kernel}. When $\lambda_1$ goes to $0$, the $\mathrm{SDP}$ constraint becomes a $\Gamma \in \RR_+^{{n_\mu}\times {n_\nu}}$ positivity constraint, and when $\lambda_2$ goes to $0$, the MMD penalization terms enforce the hard constraints $\Gamma\11_{n_\nu}= \frac{\11_{n_\mu}}{{n_\mu}}$ and $\Gamma^\top\11_{n_\mu} = \frac{\11_{n_\nu}}{{n_\nu}}$. In particular, when $(\lambda_1, \lambda_2) \rightarrow (0, 0)$, we recover the unregularized OT problem between the empirical measures $\hat{\mu} = \frac{1}{{n_\mu}}\sum_{i=1}^{n_\mu}\delta_{x_i}$ and $\hat{\nu} = \frac{1}{{n_\nu}}\sum_{j=1}^{n_\nu}\delta_{y_j}$, which can be formally verified by deriving the dual of \eqref{eq:w_hat} with $\lambda_1$ and/or $\lambda_2$ equal to $0$.


\section{Numerical Experiments}\label{sec:numerical}

\begin{figure}[ht]
    \centering
    \includegraphics[width = .32\textwidth]{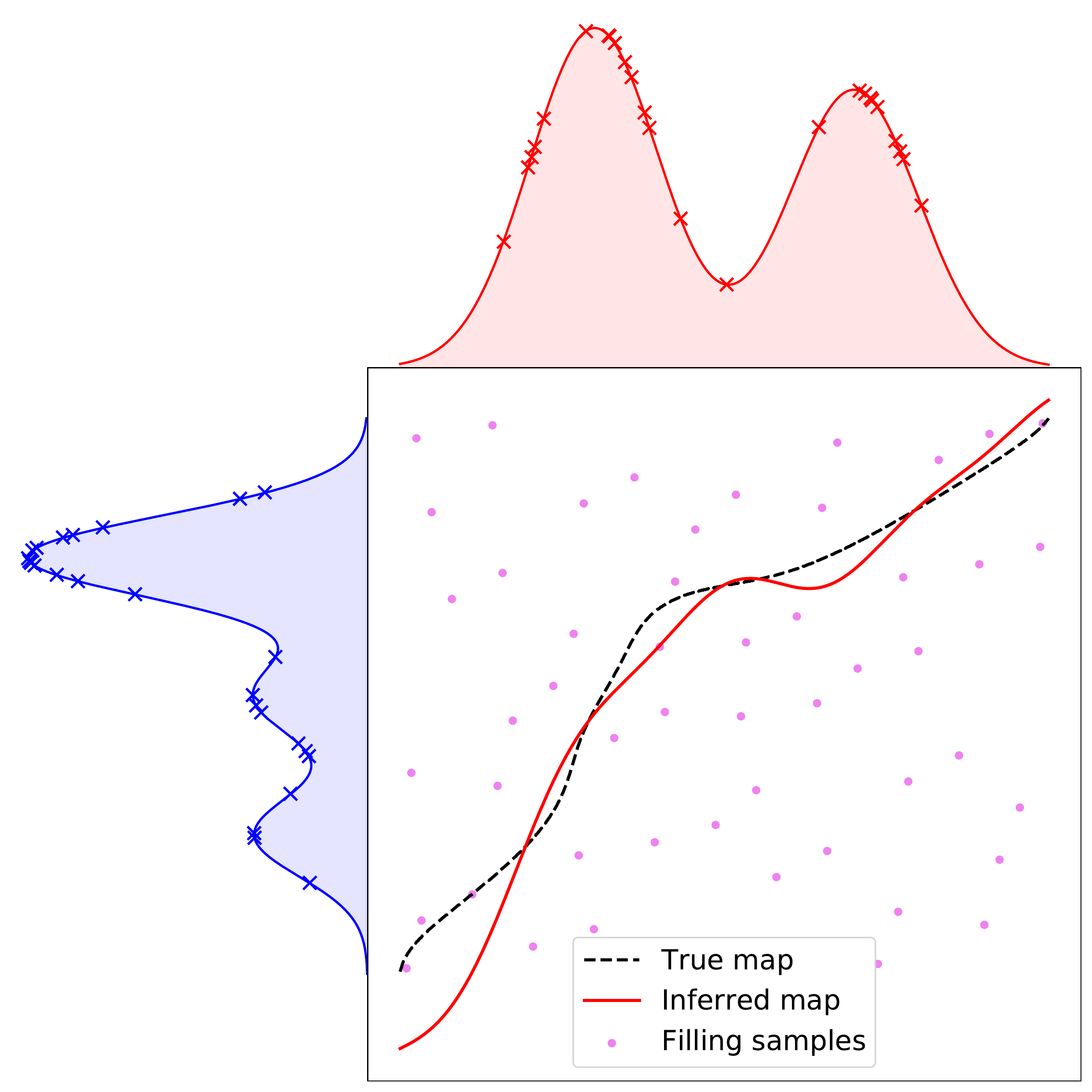}
    \includegraphics[width = .32\textwidth]{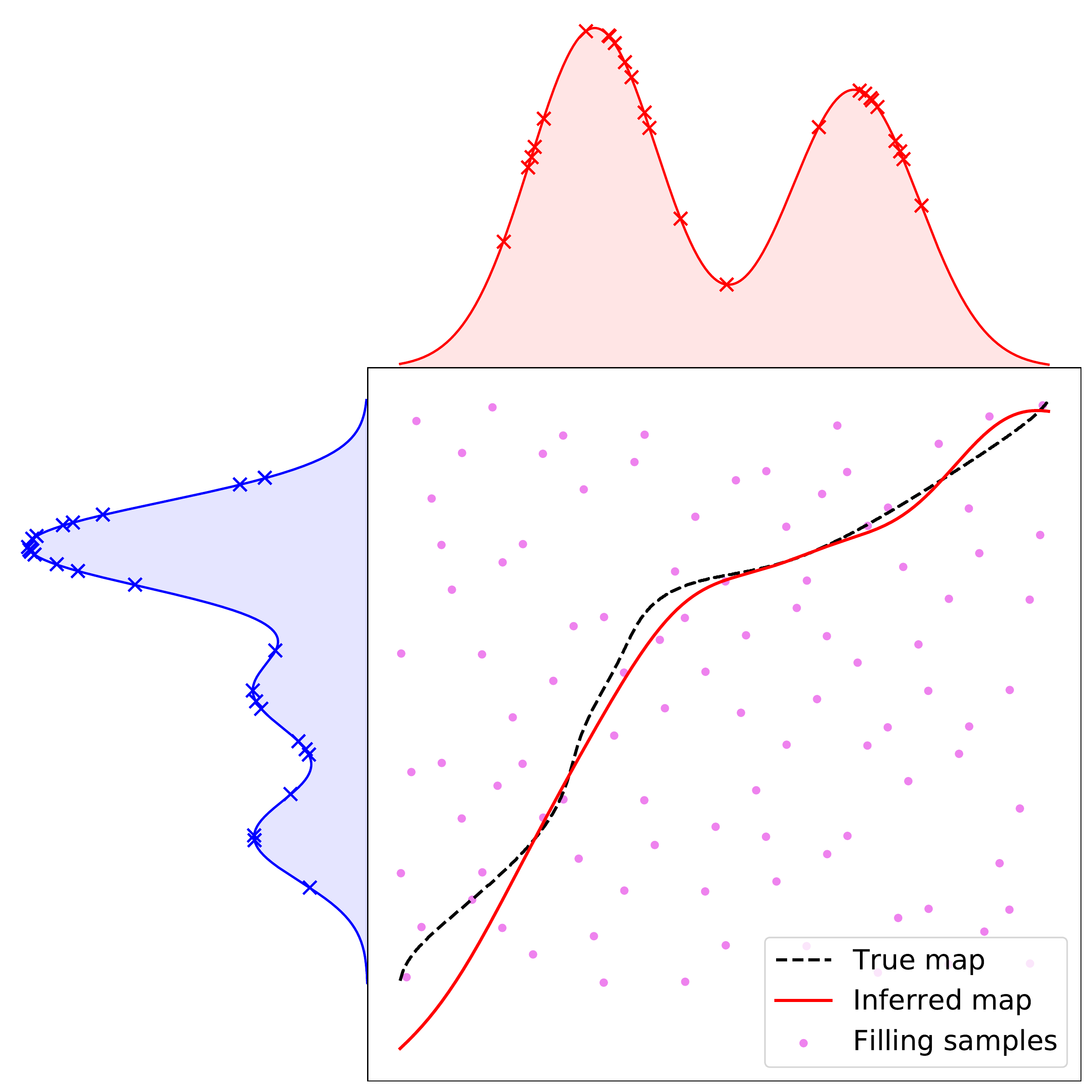}
    \includegraphics[width =
    .32\textwidth]{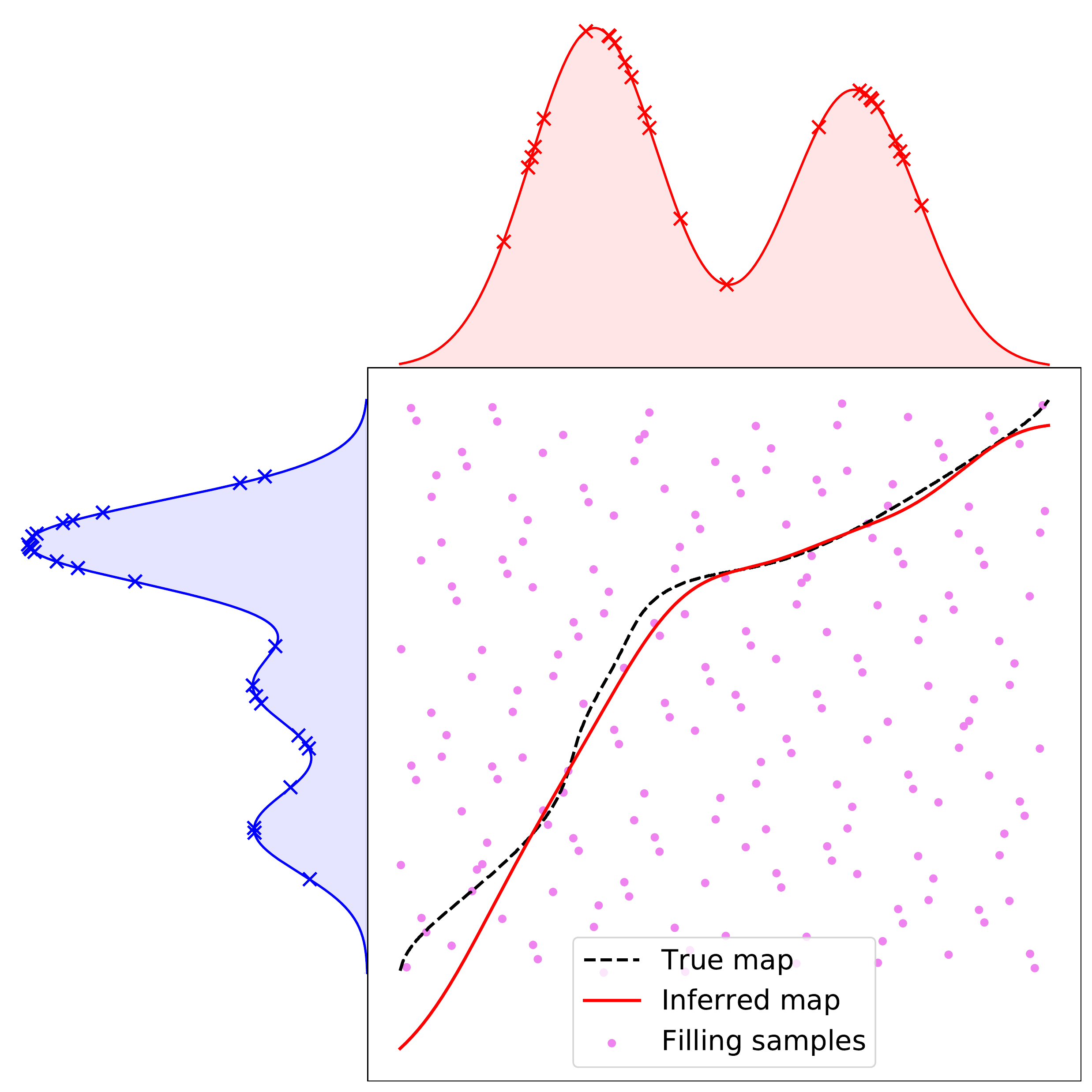}
    \caption{Effect of increasing the number of filling samples $\ell$ on the transportation map. \textit{(left)}: $\ell=50, {n_\mu} = {n_\nu} =25$, \textit{(middle)}: $\ell=100, n =25$, \textit{(right)}: $\ell=200, {n_\mu} = {n_\nu} =25$.}
    \label{fig:transport_map_fill}
\end{figure}

\begin{figure}[ht]
    \centering
    \includegraphics[width = .32\textwidth]{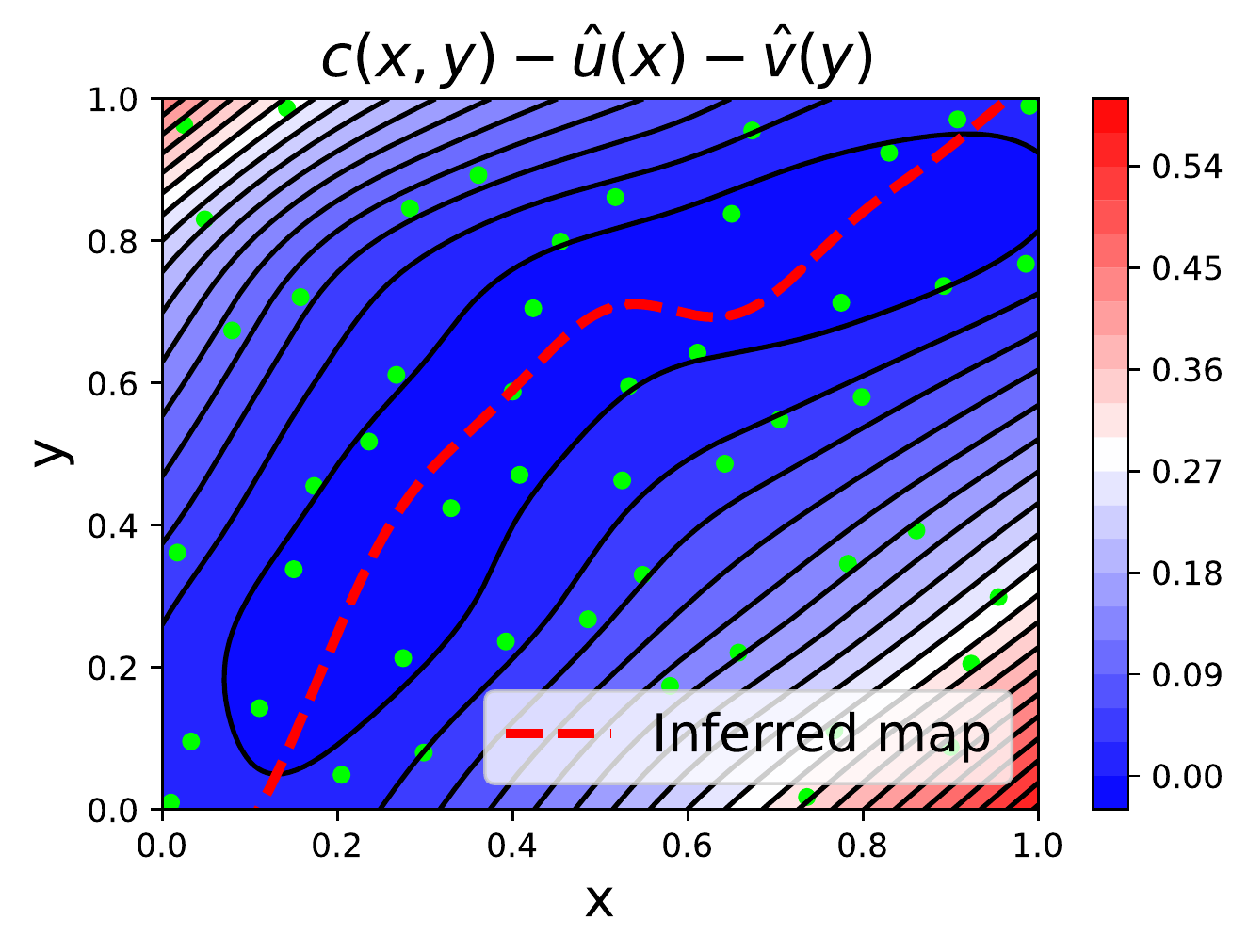}
    \includegraphics[width = .32\textwidth]{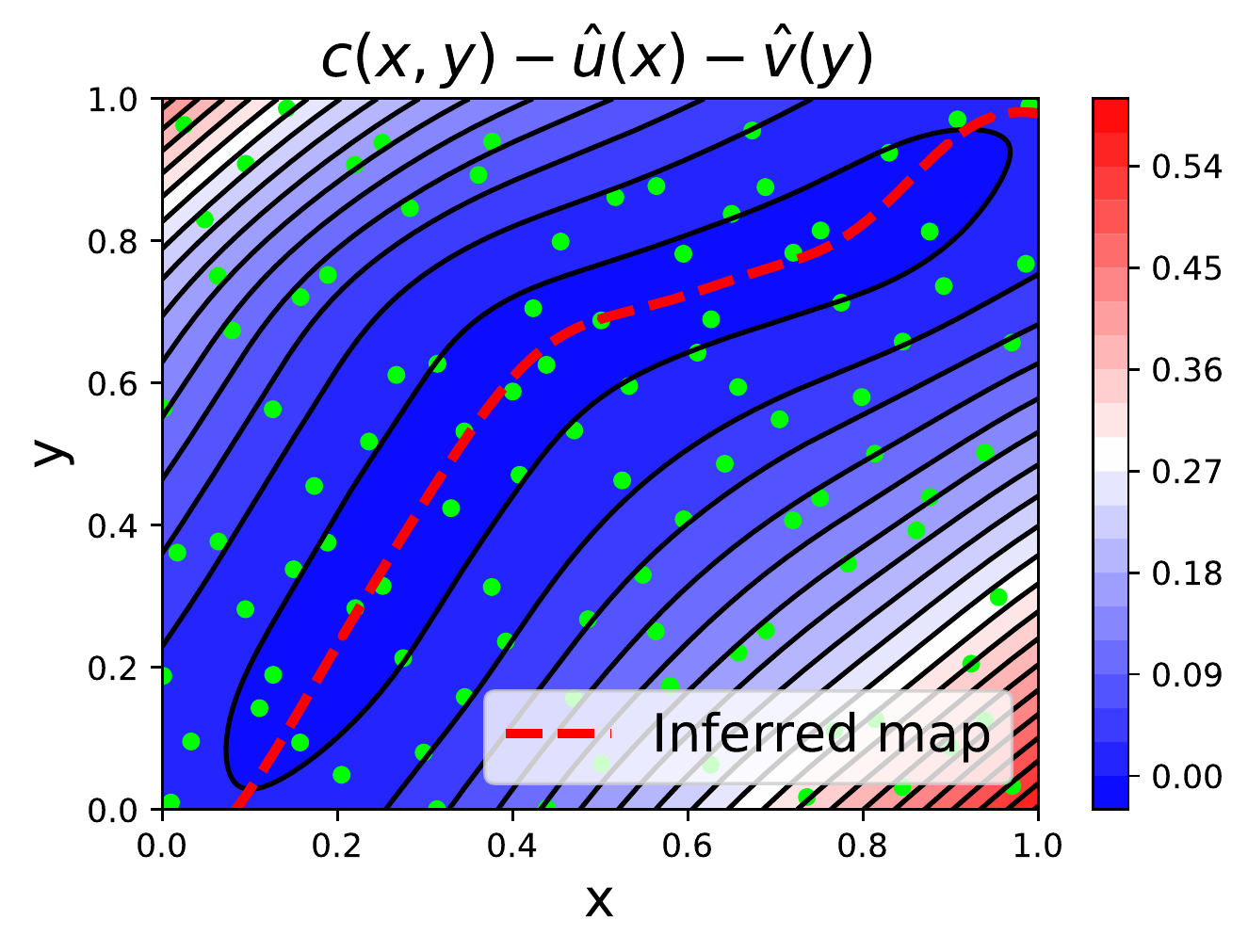}
    \includegraphics[width =  .32\textwidth]{figs/constraint_function_true.pdf}
    \caption{Effect of increasing the number of filling samples $\ell$ on the constraint model. \textit{(left)}: $\ell=50, {n_\mu} = {n_\nu} =25$, \textit{(middle)}: $\ell=100, {n_\mu} = {n_\nu} =25$, \textit{(right)}: true function.}
    \label{fig:constraint_fill}
\end{figure}

\begin{figure}[ht]
    \centering
    \includegraphics[width = .32\textwidth]{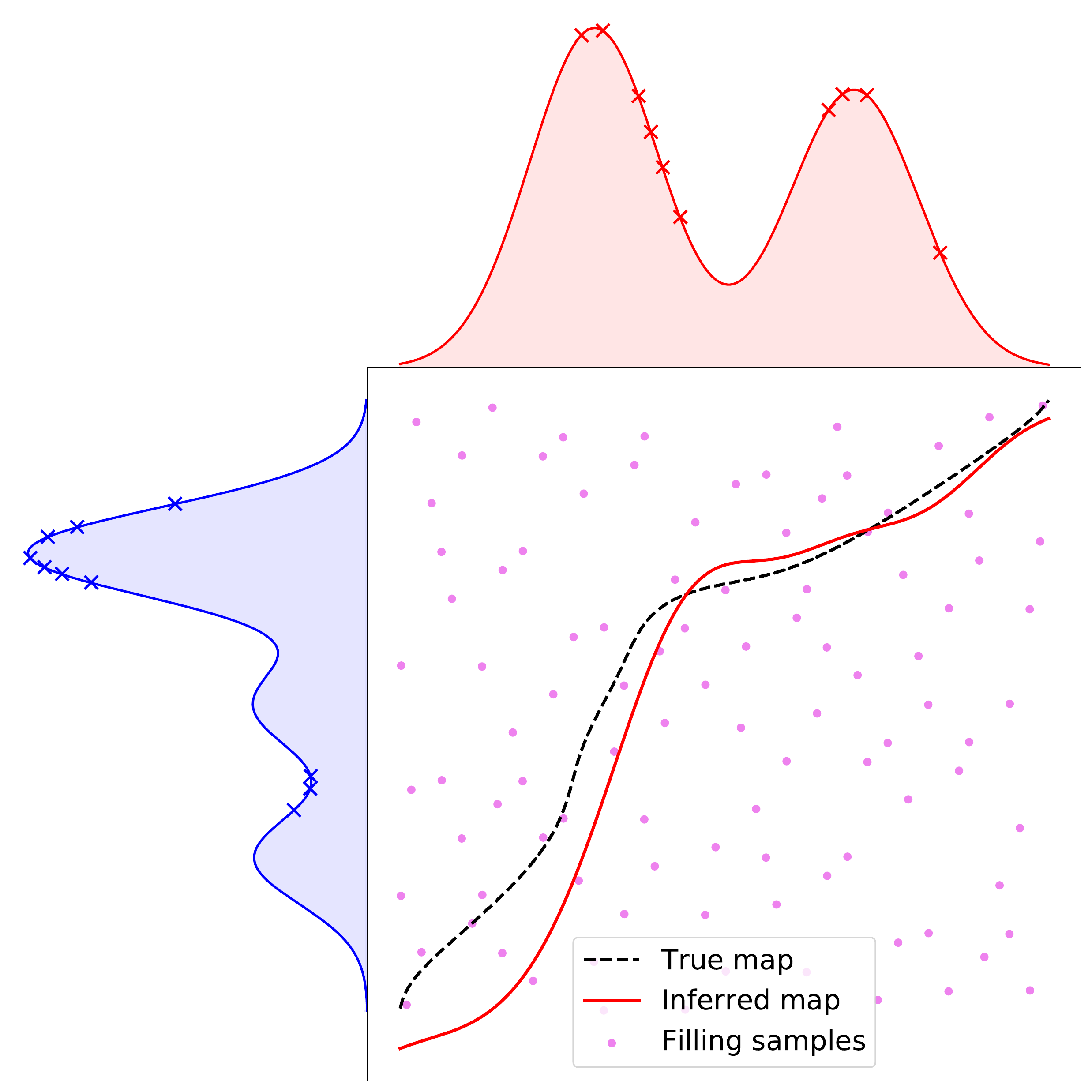}
    \includegraphics[width = .32\textwidth]{figs/transport_map_graph_nsamples_with_sobol_25_nfill_100.pdf}
    \includegraphics[width = .32\textwidth]{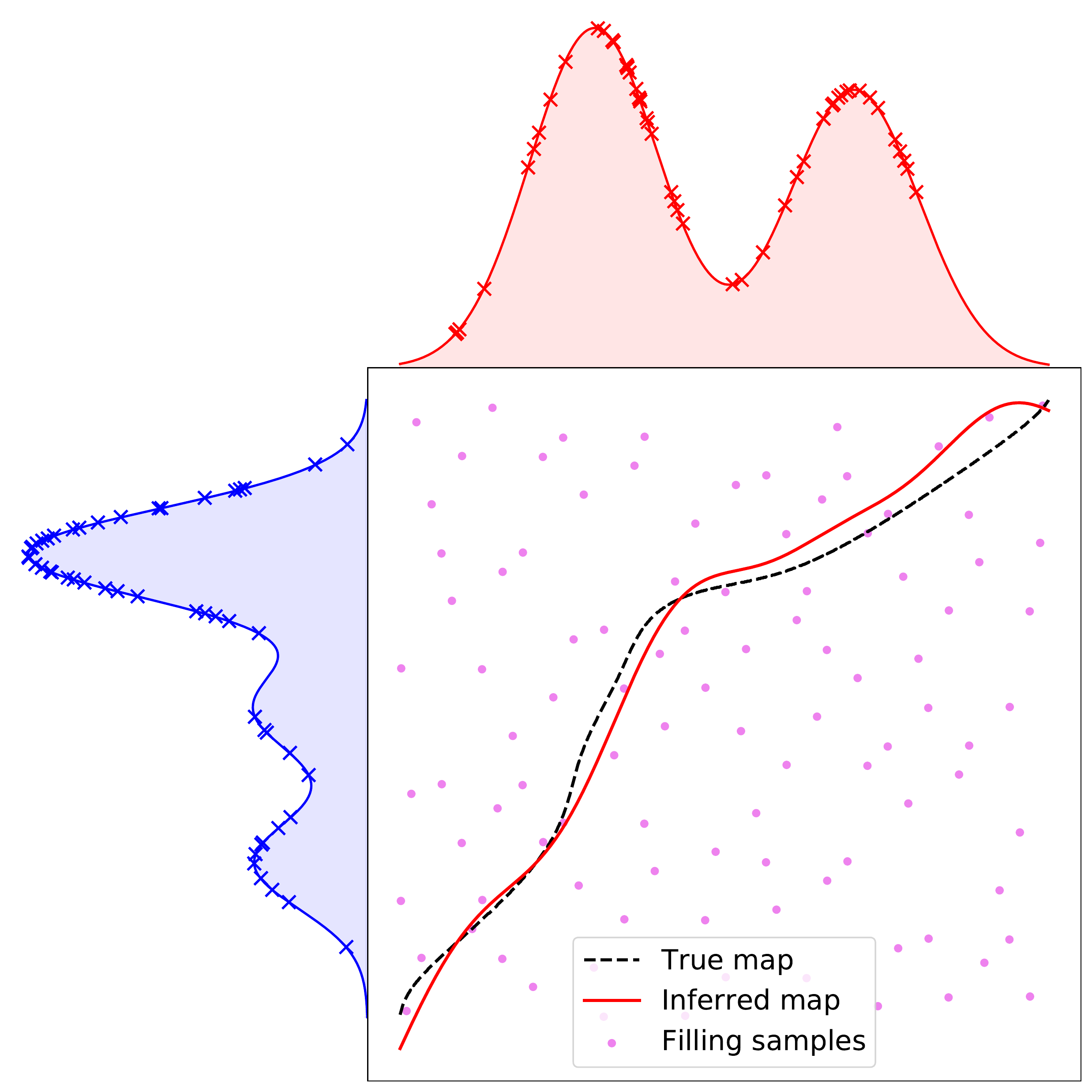}
    \caption{Effect of increasing the number of $\mu$ and $\nu$ samples on the transportation map. \textit{(left)}: $\ell=100, {n_\mu} = {n_\nu}=10$, \textit{(middle)}: $\ell=100, {n_\mu} =  {n_\nu} = 25$, \textit{(right)}: $\ell=100, {n_\mu} = {n_\nu} =50$.}
    \label{fig:transport_map_samples}
\end{figure}

\begin{figure}[ht]
    \centering
    \includegraphics[width = .32\textwidth]{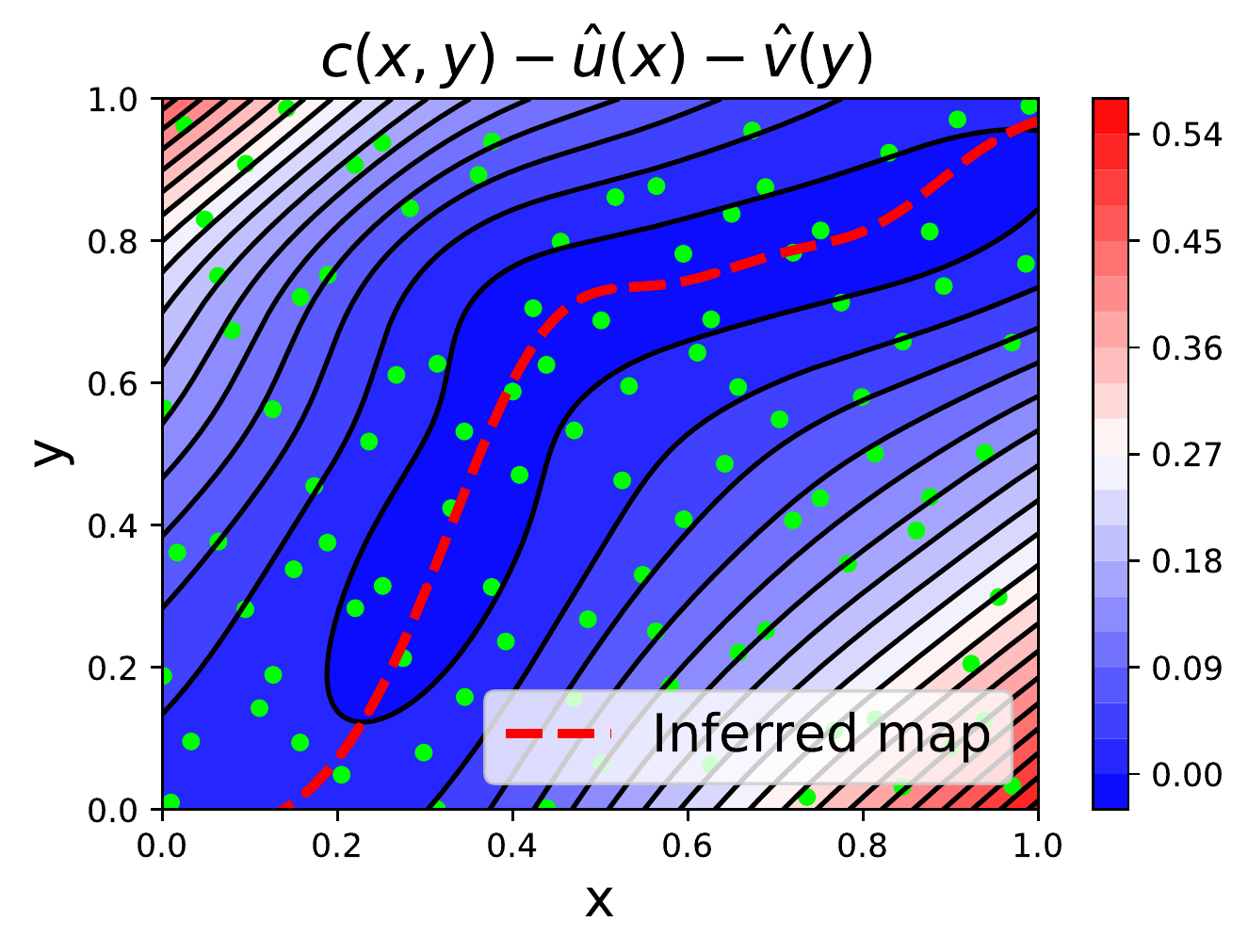}
    \includegraphics[width = .32\textwidth]{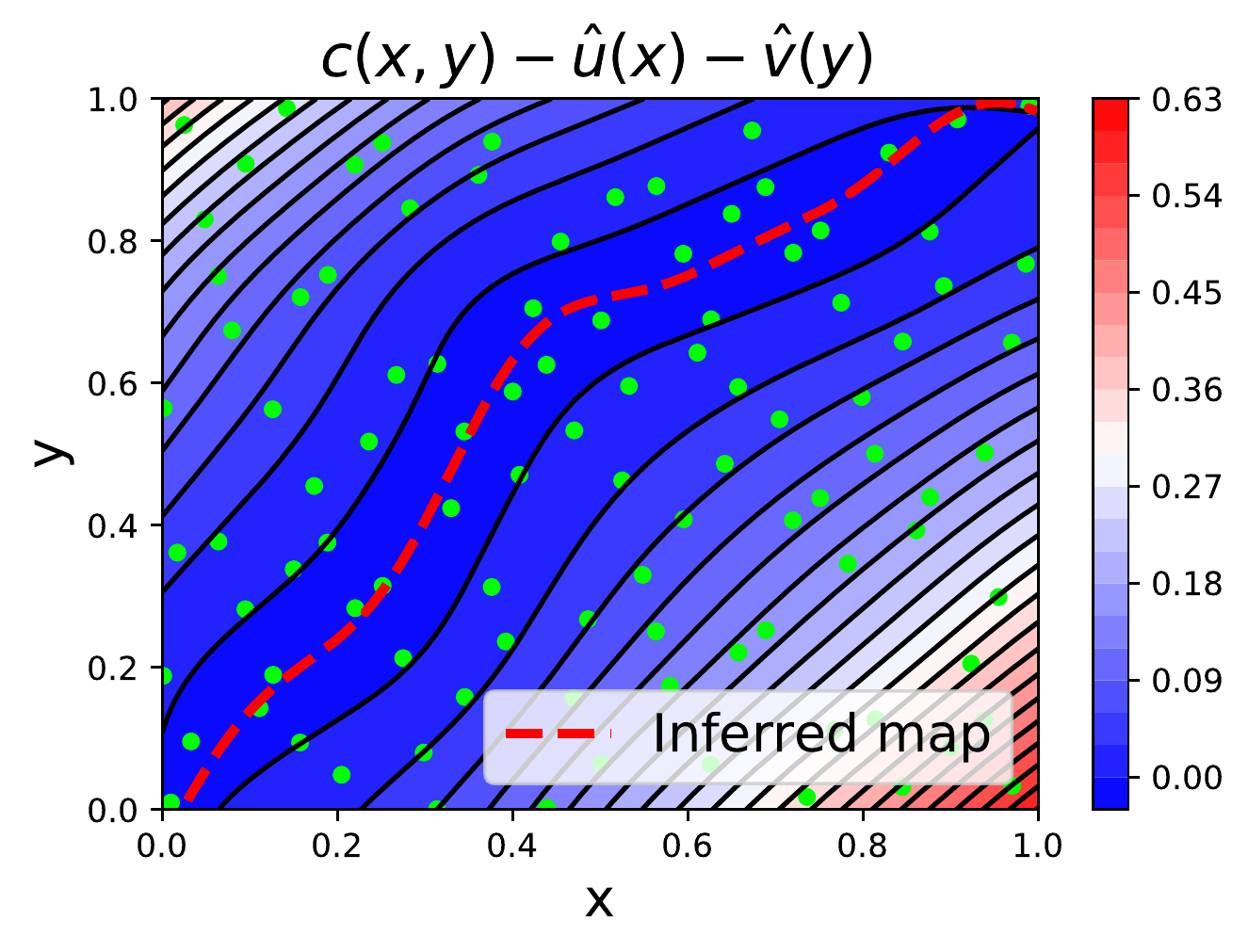}
    \includegraphics[width =  .32\textwidth]{figs/constraint_function_true.pdf}
    \caption{Effect of increasing the number of $\mu$ and $\nu$ samples on the constraint model. \textit{(left)}: $\ell=100, {n_\mu} = {n_\nu}=10$, \textit{(middle)}: $\ell=100, {n_\mu} = {n_\nu} =50$, \textit{(right)}: true function.}
    \label{fig:constraint_samples}
\end{figure}

\paragraph{1D transportation maps and dual constraint functions.}
We illustrate the algorithm described in \Cref{thm:dual_algorithm} in a 1D setting in \Cref{fig:transport_map_fill,fig:constraint_fill,fig:transport_map_samples,fig:constraint_samples}, by representing the inferred transportation map $\hat{T}$ obtained from $\hat{u}$, defined as $\hat{T} = x - \nabla_x \hat{u}(x)$, and the corresponding constraint function $\hat{h}(x, y) = \frac{1}{2}\|x-y\|^2 - \hat{u}(x) - \hat{v}(y)$. We sample $x_1, ..., x_{n_\mu}$ \textit{i.i.d.} from $\mu$ and $y_1, ..., y_{n_\nu}$ \textit{i.i.d.} from $\nu$, and use quasi-random samples $(\tx_1, \ty_1), ... (\tx_\ell, \ty_\ell)$ from a 2D Sobol sequence~\citep{sobol1967distribution}, and illustrate the effect of varying $n$, the number of $\mu$ and $\nu$ samples, and $\ell$, the number of space-filling samples. For $k_X, k_Y$ and $k_{XY}$, we use Gaussian kernels $k(x, y) = \exp(-\frac{\|x - y\|^2}{2\sigma^2})$ of fixed bandwidth $\sigma^2 = 0.1$, and scale the regularization parameters as $\lambda_1 = \frac{1}{\ell}$ and $\lambda_2 = \frac{1}{\sqrt{n}}$.


\paragraph{Convergence of $\hOT$ to $\OT$.}

In \Cref{fig:convergence_w_hat}, we compare $\hOT$ to the sampled optimal transport estimator on two 4D truncated Gaussian distributions $\mu$ and $\nu$ s.t.\ the optimal transportation map from one to another is linear. We progressively increase the number of $\mu$ and $\nu$ samples, averaging on $20$ random draws for each number of samples. The number of filling sample pairs $(\tx_i, \ty_i)$ is $\ell = 100 + {n}$, where $n = {n_\mu} = {n_\nu}$ is the number samples from $\mu$ and $\nu$. We select the best estimator $\hOT$ using a grid search on $(\lambda_1, \lambda_2)$. As such, this simulation does not provide a method for selecting those parameters, but rather illustrates that a good pair of parameters exists.

\begin{figure}[ht]
    \centering
    \includegraphics[width=.75\textwidth]{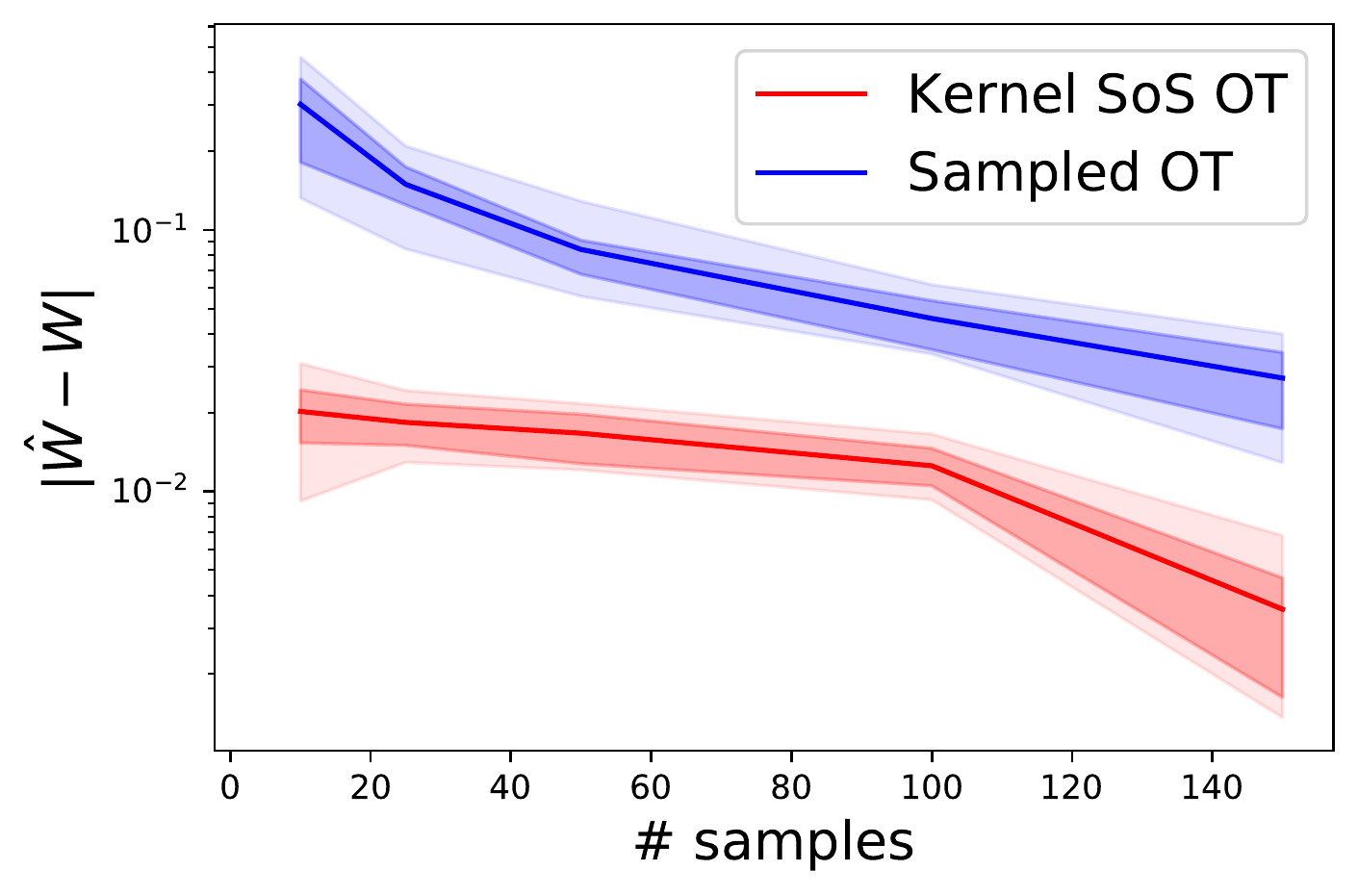}
    \caption{Convergence on 4D truncated Gaussian data with increasing number of samples ({\em left}). Full lines correspond to the average mean absolute error (MAE), shaded areas to $25\%-75\%$ and $10\%-90\%$ MAE quantiles. The parameters $\lambda_1, \lambda_2$ are selected via a grid search.}
    \label{fig:convergence_w_hat}
\end{figure}

\end{document}